\numberwithin{equation}{section}
\newtheorem{proposition}{Proposition}[section]
\newtheorem{definition}{Definition}[section]
\newtheorem{lemma}{Lemma}[section]
\newtheorem{theorem}{Theorem}[section]
\newtheorem{corollary}{Corollary}[section]
\newtheorem{remark}{Remark}[section]
\let\pa=\partial
\let\al=\alpha
\def\C{\mathop{\mathbb C\kern 0pt}\nolimits}
\def\DD{\mathop{\bf D\kern 0pt}\nolimits}
\def\K{\mathop{\bf K\kern 0pt}\nolimits}
\def\N{\mathop{\bf N\kern 0pt}\nolimits}
\def\Q{\mathop{\bf Q\kern 0pt}\nolimits}
\newcommand{\la}{\lambda}
\newcommand{\beq}{\begin{equation}}
\newcommand{\eeq}{\end{equation}}
\newcommand{\ben}{\begin{eqnarray}}
\newcommand{\een}{\end{eqnarray}}
\newcommand{\beno}{\begin{eqnarray*}}
\newcommand{\eeno}{\end{eqnarray*}}
\newcommand{\Extend}[5]{\ext@arrow0099{\arrowfill@#1#2#3}{#4}{#5}}
\begin{document}
\title{Scattering Theory  for The Defocusing  Fourth-order Schr\"odinger  Equation}

\author{Changxing Miao}
\address{Institute of Applied Physics and Computational Mathematics,
P. O. Box 8009,\ Beijing,\ China,\ 100088;} \email{miao\_changxing@iapcm.ac.cn}

\author{Jiqiang Zheng}
\address{The Graduate School of China Academy of Engineering Physics, P. O. Box 2101, Beijing, China, 100088}
\email{zhengjiqiang@gmail.com} \maketitle
\begin{abstract}
In this paper, we study the global well-posedness and scattering
theory for the defocusing  fourth-order nonlinear Schr\"odinger
equation (FNLS) $iu_t+\Delta^2 u+|u|^pu=0$  in dimension $d\geq9$.
We prove that if the solution $u$ is apriorily bounded in the
critical Sobolev space, that is, $u\in L_t^\infty(I;\dot
H^{s_c}_x(\R^d))$ with all $s_c:=\frac{d}2-\frac4p\geq1$ if $p$ is
an even integer or $s_c\in[1,2+p)$ otherwise, then $u$ is global and
scatters. The impetus to consider this problem stems from a series
of recent works for the energy-supercritical and energy-subcritical
nonlinear Schr\"odinger equation (NLS) and nonlinear wave equation
(NLW). We will give a uniform way to treat the energy-subcritical,
energy-critical and energy-supercritical FNLS, where we utilize the
strategy derived from concentration compactness ideas to show that
the proof of the global well-posedness and scattering is reduced to
exclude the existence of three scenarios: finite time blowup;
soliton-like solution and low to high frequency cascade. Making use
of the No-waste Duhamel formula, we deduce that the energy or mass
of the finite time blow-up solution is zero and so get a
contradiction. Finally, we adopt the double Duhamel trick, the
interaction Morawetz estimate and interpolation to kill the last two
scenarios.
\end{abstract}

\begin{center}
 \begin{minipage}{120mm}
   { \small {\bf Key Words: Fourth-order Schr\"odinger equation;  scattering theory; Strichartz estimate; critical regularity;
   concentration compactness.}
      {}
   }\\
    { \small {\bf AMS Classification:}
      {35P25,  35Q55, 47J35.}
      }
 \end{minipage}
 \end{center}
\section{Introduction}
\setcounter{section}{1}\setcounter{equation}{0} This paper is mainly
concerned with the Cauchy problem of the defocusing fourth-order
Schr\"odinger equation (FNLS)
\begin{align} \label{equ1.1}
\begin{cases}    iu_t+\Delta^2u+f(u)=  0,  \qquad  (t,x) \in
\mathbb{R}\times\mathbb{R}^d,~d\geq9,\\
u(0,x)=u_0(x)\in \dot H^{s_c}(\R^d),\end{cases}
\end{align}
where $f(u)=|u|^pu,~u$ is a complex-valued function defined in
$\mathbb{R}^{1+d}$,  $\Delta$ is the Laplacian in $\mathbb{R}^{d},$
and $s_c:=\frac{d}2-\frac4p.$

If the solution $u$ of \eqref{equ1.1} has sufficient decay at
infinity and smoothness, it conserves  mass
\begin{equation}
M(u)=\int_{\mathbb{R}^d}|u(t,x)|^2dx=M(u_0)
\end{equation}
and energy
\begin{equation}
E(u)=\frac12\int_{\mathbb{R}^d}|\Delta
u|^2dx+\frac1{p+2}\int_{\mathbb{R}^d}|u(t,x)|^{p+2}dx=E(u_0).
\end{equation}
As similarly explained in \cite{GiV00},  the above quantities are
also conserved for the energy solutions $u \in C^0_t(\mathbb{R},
H^2(\mathbb{R}^d))$. We call $\dot{H}^2_x(\R^d)$ the energy space.

The equation \eqref{equ1.1} has the scaling invariance symmetry:
\begin{equation}\label{scale}
u(t,x)\mapsto \la^{\frac4p}\ u(\la^4\ t,\la\ x), ~\forall~\la>0
\end{equation}
in the sense that both the equation and the $\dot{H}^{s_c}$-norm are
invariant under the scaling transformation:
$$\|u_\lambda\|_{\dot{H}^{s_c}(\R^d)}=\|u\|_{\dot{H}^{s_c}(\R^d)}.$$

We call FNLS \eqref{equ1.1} the energy-subcritical when
$p<\frac8{d-4}$, which corresponds to $s_c<2$, in particular, it is
called the mass-critical when $p=\frac8d$, corresponding to $s_c=0$;
\eqref{equ1.1} refers to energy-critical when $d\geq5$ and
$p=\frac8{d-4}$, corresponding to $s_c=2$; and \eqref{equ1.1} refers
to energy-supercritical when $p>\frac8{d-4}$, corresponding to
$s_c>2$.

Fourth-order Schr\"odinger equations have been introduced by Karpman
\cite{Karpman} and Karpman and Shagalov \cite{KarpmanShagalov} to
take into account the role of small fourth-order dispersion terms in
the propagation of intense laser beams in a bulk medium with Kerr
nonlinearity. Such fourth-order Schr\"odinger equations are written
as
\begin{equation}
i\pa_tu+\Delta^2u+\varepsilon\Delta u+f(|u|^2)u=0,
\end{equation}
where $\varepsilon\in\{\pm1,0\}.$ Such equations have been studied
from the mathematical viewpoint in Fibich, Ilan and Papanicolaou
\cite{Fibich} who describe various properties of the equaion in the
subcritical regime, with part of their analysis relying on very
interesting numerical developments. Related reference is
\cite{Ben-Artzi} by Ben-Artzi, Koch, and Saut, which gives sharp
dispersive estimates for the biharmonic Schr\"odinger operator which
lead to  the Strichartz estimates for the fourth-order
Schr\"{o}dinger equation, see also
\cite{MiaoZhang,Pausader,Pausader1}. Guo and Wang \cite{GuoWang} who
prove global well-posedness and scattering in $H^s$ for small data.
For other special fourth order nonlinear Schr\"{o}dinger equation,
please refer to \cite{Segata,ZZ,Z}. For FNLS \eqref{equ1.1}, the
defocusing energy-critical case with nonlinearity given by $f(u)
=|u|^\frac8{d-4}u$ was handled by Pausader \cite{Pausader,Pausader1}
in dimension $d=8$, in which case the nonlinearity is cubic, and
Miao, Xu and Zhao \cite{MiaoXuZhao} in dimension $d\geq9$. We also
refer to Miao, Xu and Zhao \cite{MiaoXu} and Pausader \cite{Pau} for
the focusing case with radially symmetrical initial data. For the
defocusing mass-critical case with nonlinearity given by
$f(u)=|u|^\frac8du$, we refer to Pausader and Shao \cite{PB}, Xia
and Pausader \cite{XP}.

On the other hand, the global well-posedness and scattering theory
for the nonlinear Schr\"odinger equations (NLS)
\begin{equation}
i\pa_tu-\Delta u\pm|u|^pu=0,\quad (t,x)\in\R\times\R^d
\end{equation}
 have been intensively studied recently, most notably
by Bourgain \cite{Bo99a}, Colliander, Keel, Staffilanni, Takaoka and
Tao \cite{CKSTT07}, Kenig and Merle \cite{KM} and Killip and Visan
\cite{KV20101} and Visan \cite{Visan2007,Visan2011} for the
energy-critical case and Tao, Visan and Zhang \cite{TVZ2007},
Killip, Tao and Visan \cite{KTV2009}, Killip, Visan and Zhang
\cite{KVZ2008} and Dodson \cite{Dodson3,Dodson2,Dodson1,Dodson} for
the mass-critical case.

So far, there is no technology for treating large-data NLS without
some a priori control of a critical norm other than the
energy-critical NLS and mass-critical NLS. In \cite{KM2010},
Kenig-Merle first showed that if the radial solution $u$ to NLS
obeys $u\in L_t^\infty(I; \dot{H}^{s_c}(\R^3))$ with $s_c=\frac12$,
then $u$ is global and scatters, where they utilized their
concentration compactness technique as in \cite{KM}, together with
the Lin-Strauss Morawetz inequality which scales like
$\dot{H}^\frac12_x(\R^d)$ and is scaling-critical in this case.
Thereafter, Killip--Visan \cite{KV2010} proved  such result for NLS
in some energy-supercritical regime. In particular, they deal with
the case of a cubic nonlinearity for $d\geq5$, along with some other
cases for which $s_c>1$ and $d\geq5$, where the restriction to high
dimensions comes from the double Duhamel trick. Recently, Murphy
\cite{Mu} considers the energy-subcritical NLS by making use of the
tool ``long time Strichartz estimate" developed by Dodson
\cite{Dodson3} for almost periodic solutions in the mass-critical
setting.

In this paper, we will give a uniform way to treat the
energy-subcritical, energy-critical and energy-supercritical FNLS in
dimension $d\geq9$. We remark that the arguments in this paper also
work for the energy-critical and some energy-subcritical NLS in
dimension $d\geq5$.

\vskip 0.2in Now we introduce some background materials.
\begin{definition}[solution]\label{def1.1}
 A function $u:~I\times\R^d\to\mathbb{C}$ on a nonempty
time interval $I\subset\R$ is a strong solution to \eqref{equ1.1} if
$u\in C_t(K; \dot{H}^{s_c}_x(\R^d))\cap
L_{t,x}^{\frac{d+4}4p}(K\times\R^d)$ for any compact interval
$K\subset I$ and for any $t,t_0\in I,$ it  obeys the Duhamel
formula:
\begin{equation}\label{duhamel.1}
u(t,x)=e^{i(t-t_0)\Delta^2}u(t_0)-i\int_{t_0}^te^{i(t-s)\Delta^2}f(u(s))ds.
\end{equation}
 We say that the interval $I$ is
the lifespan of $u$. We call $u$ a maximal-lifespan solution if the
solution cannot be extended to any strictly larger interval. In
particular, if $I=\R,$ then we say that $u$ is a global solution.
\end{definition}

The solution lies in the space $
L_{t,x}^{\frac{(d+4)p}4}(I\times\R^d)$ locally in time is natural
since by Strichartz estimate (see Proposition \ref{prop1} below),
the linear flow always lies in this space.  Also, if a solution $u$
to \eqref{equ1.1} is global, with $\|u\|_{
L_{t,x}^{\frac{d+4}4p}(\R\times\R^d)}<+\infty$, then it scatters in
both time directions in the sense that
 there exist unique $v_\pm\in\dot{H}^{s_c}_x(\R^d)$ such that
\begin{equation}\label{1.2.1}
\big\|u(t)-e^{it\Delta^2}v_\pm\big\|_{\dot{H}^{s_c}_x(\R^d)}
\longrightarrow 0,\quad \text{as}\quad t\longrightarrow \pm\infty.
\end{equation}
In view of this, we define
\begin{equation}\label{scattersize1.1}
 S_I(u)=\|u\|_{
L_{t,x}^{\frac{d+4}4p}(I\times\R^d)}
\end{equation}
as the scattering size of $u$.

Closely associated with the notion of scattering is the notion of
blow-up:
\begin{definition}[Blow-up]\label{def1.2.1} Let $u:I\times\R^d\to\mathbb{C}$ be a  maximal-lifespan solution to \eqref{equ1.1}. If there
exists a time $t_0\in I$ such that $S_{[t_0,\sup I)}(u)=+\infty$,
then we say that the solution $u$
 blows up forward in time. Similarly, if there exists a time $t_0\in
 I$ such that $S_{(\text{inf}~
I,t_0]}(u)=+\infty$, then we say that  $u(t,x)$ blows up backward in
time.
\end{definition}

Now we state our main result.

\begin{theorem}\label{theorem}
Assume that $d\geq9,$ and $s_c\geq1$ if $p$ is an even integer or
$s_c\in[1,2+p)$ otherwise.  Let $u:~I\times\R^d\to\C$ be a
maximal-lifespan solution to \eqref{equ1.1} such that
\begin{equation}\label{assume1.1}
\|u\|_{L_t^\infty(I; \dot{H}^{s_c}_x(\R^d))}<+\infty.
\end{equation} Then $I=\R$, and the solution $u$  scatters in the sense \eqref{1.2.1}.
\end{theorem}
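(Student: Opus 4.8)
The plan is to carry out the concentration-compactness and rigidity program of Kenig--Merle, in the form---adapted to solutions controlled only by an a priori bound on a critical norm---developed by Killip--Visan and Murphy. By the small-data theory and the perturbation (stability) theory built on the Strichartz estimates of Proposition \ref{prop1}, Theorem \ref{theorem} is equivalent to the assertion that the scattering size $S_I(u)$ of any solution is bounded by a quantity depending only on $\|u\|_{L^\infty_t(I;\dot H^{s_c}_x(\R^d))}$. First I would argue by contradiction: if this fails there is a critical threshold, and---via a linear profile decomposition for the propagator $e^{it\Delta^2}$ in $\dot H^{s_c}_x$, the stability theory, and a Palais--Smale-type argument modulo the scaling and translation symmetries \eqref{scale}---one extracts a nonzero minimal blow-up solution $u:I\times\R^d\to\C$ that is \emph{almost periodic modulo symmetries}: there are $N:I\to(0,\infty)$ and $x:I\to\R^d$ so that the orbit $\{N(t)^{s_c-\frac d2}u(t,N(t)^{-1}(\cdot-x(t)))\}$ is precompact in $\dot H^{s_c}_x(\R^d)$. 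A further normalization of $N$ then reduces the theorem to excluding three scenarios: (i) a finite-time blow-up solution; (ii) a soliton-like solution with $I=\R$ and $N(t)\equiv1$; and (iii) a low-to-high frequency cascade, with $I=\R$, $\inf_{t\in\R}N(t)\gtrsim1$, and $N(t_n)\to\infty$ along some sequence $t_n\to\infty$.

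Next I would rule out the finite-time blow-up solution. Here $N(t)\to\infty$ as $t$ tends to a finite endpoint of $I$, and almost periodicity forces $u(t)$ to concentrate at a single spatial point there. The key tool is the ``no-waste'' Duhamel formula: for every $t\in I$ one has $u(t)=-i\lim_{T\uparrow\sup I}\int_t^T e^{i(t-s)\Delta^2}f(u(s))\,ds$ as a weak limit, and the analogous identity towards $\inf I$. Combining this with the compactness and the dispersive decay of $e^{it\Delta^2}$, I would upgrade the solution's regularity and integrability so as to place $u(t)$ in the mass space $L^2_x(\R^d)$ when $1\le s_c<2$, or in the energy space $\dot H^2_x(\R^d)$ when $s_c\ge2$. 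Pairing the no-waste Duhamel formula against $u(t)$ and sending $t$ to the finite endpoint then forces the corresponding conserved quantity---mass, respectively energy---to vanish; hence $u\equiv0$, a contradiction.

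For the soliton-like and frequency-cascade scenarios I would first invoke the double Duhamel trick to gain regularity: pairing the forward and backward Duhamel representations of $u(t)$ and exploiting the $|t|^{-d/4}$ dispersive decay of the biharmonic propagator yields $u\in L^\infty_t\dot H^s_x$ for $s$ in an interval strictly containing $s_c$---slightly below $s_c$ and, in the cascade case, arbitrarily far above it. It is precisely the convergence of the resulting double time integral at the regularity level $s_c\ge1$ that demands $d\ge9$. In the cascade scenario, the regularity gain brings $u$ into $L^2_x(\R^d)$; the mass is then conserved, whereas a fixed amount of $L^2$-mass carried at frequency $N(t_n)\to\infty$ would make $\|u(t_n)\|_{\dot H^{s_c}_x}\to\infty$, contradicting \eqref{assume1.1} unless the mass is zero---so $u\equiv0$. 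In the soliton scenario, I would interpolate between the a priori $\dot H^{s_c}_x$ bound and the additional regularity to place $u$ in the function space in which the interaction Morawetz inequality for \eqref{equ1.1} is both valid and scaling-critical; that inequality gives a spacetime bound independent of $I$, while $N(t)\equiv1$ together with the non-escape of mass (again a consequence of compactness) furnishes a lower bound growing linearly in $|I|$---impossible for $I=\R$.

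The principal obstacle I anticipate lies in two places. The first is setting up the concentration-compactness infrastructure for the biharmonic flow: a linear profile decomposition in $\dot H^{s_c}_x$ adapted to $e^{it\Delta^2}$ and the accompanying nonlinear stability theory, where the non-polynomial nonlinearity $f(u)=|u|^pu$ (when $p$ is not an even integer) requires carrying fractional derivatives through the nonlinear estimates via fractional chain and Leibniz rules---this limited smoothness of $f$ is the source of the restriction $s_c<2+p$. The second, and more delicate, is the double-Duhamel regularity upgrade in the energy-supercritical range: one must track which Strichartz and Sobolev exponents remain admissible as $s$ moves away from $s_c$, iterate the trick to reach the full claimed range of $s$ (in particular down to $L^2_x$ in the cascade case), and verify that $d\ge9$ really does force the relevant time integrals to converge; its interplay with the interaction Morawetz estimate---whose natural scaling for $\Delta^2$ differs from $\dot H^{s_c}_x$, forcing the interpolation step---is the technical heart of the argument.
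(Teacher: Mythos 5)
Your overall architecture coincides with the paper's: reduction to almost periodic enemies, the no-waste Duhamel formula to kill finite-time blowup through vanishing of the mass/energy, and extra regularity via the double Duhamel trick combined with the interaction Morawetz estimate for the two global scenarios. The genuine gap is in your exclusion of the low-to-high frequency cascade. You argue that once $u\in L^\infty_tL^2_x$, a fixed amount of mass ``carried at frequency $N(t_n)\to\infty$'' would force $\|u(t_n)\|_{\dot H^{s_c}_x}\to\infty$. But almost periodicity only localizes the \emph{critical} norm: \eqref{apsf} and \eqref{xiaoc} say the $\dot H^{s_c}_x$-norm lives at frequencies comparable to $N(t)$; they say nothing about where the mass lives. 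The mass of a cascade solution may sit at arbitrarily low frequencies, where its contribution to $\dot H^{s_c}_x$ is negligible, so no blow-up of the critical norm follows and no contradiction is obtained from $u\in L^\infty_tL^2_x$ alone. This is precisely why Theorem \ref{regular} produces \emph{strictly negative} regularity, $u\in L^\infty_t\dot H^{-\varepsilon}_x$ with $\varepsilon>0$: the low-frequency mass is then controlled by interpolating between $\dot H^{-\varepsilon}_x$ and the smallness of $\|P_{\leq c(\eta)N(t_n)}u\|_{\dot H^{s_c}_x}$ furnished by \eqref{xiaoc}, as in \eqref{jiashe1}, and only then does $M(u(t_n))\to0$ follow. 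Relatedly, your claim that the double Duhamel pairing yields bounds ``arbitrarily far above $s_c$'' in the cascade case cannot be right: for a genuine cascade one has $\|u(t_n)\|_{\dot H^{s}_x}\gtrsim N(t_n)^{s-s_c}\|P_{\sim N(t_n)}u(t_n)\|_{\dot H^{s_c}_x}$ for every $s>s_c$, so uniform higher regularity is unavailable; with $\inf_t N(t)\geq1$ the trick gains regularity only downward.

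A second omission concerns the double Duhamel step itself: the pairing of the forward and backward Duhamel integrals does not converge at critical regularity without an intermediate spatial-decay input. One must first prove $u\in L^\infty_tL^q_x$ for some $q$ with $2p\leq q<\frac{pd}{4}$ (Propositions \ref{adecay1} and \ref{adecay2}, established via the frequency-localized recurrence for $A_q(N)$ and the acausal Gronwall Lemma \ref{gronwall1}), so that $|\nabla|^{s}f(u)\in L^\infty_tL^r_x$ with $r=\frac{2q}{q+2p}<2$ as in \eqref{lregd21}; only then does the dispersive estimate give the integrable decay $|t-\tau|^{-d(\frac1{2r}-\frac14)}$ in \eqref{zhib}, and the compatibility condition $2p<\frac{pd}{4}$ is exactly the source of the restriction $d\geq9$. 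You flag the convergence of the double time integral as the delicate point, but you do not supply this decay mechanism, which occupies most of Section 4. With the target of the regularity gain corrected to $\dot H^{-\varepsilon}_x$ (and the cascade closed by the interpolation argument above), your outline of the finite-time blowup and soliton exclusions is essentially the paper's.
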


\begin{remark}
$(i)$ We remark that the balance between the bounds provided by
Lemma \ref{adecay1} and the bound required by Theorem \ref{regular}
by making use of the double Duhamel formula is the source of our
constraint to dimensions $d\geq9.$ More precisely, as we will see in
the below, \eqref{lregd21} provides the $L_t^\infty L_x^q$ bounds
for $q\geq2p,$ while \eqref{zhib} requires this bound with
$q<\frac{pd}4$. These conditions on $q$ impose the restriction
$d\geq9.$

$(ii)$ Our restriction $s_c\geq1$ serves to simplify the analysis
for the local theory, which still becomes a bit complicated.
However, modifying the argument in the local theory, one may extend
Theorem \ref{theorem} to $s_c\geq\frac12$  which  enables us to
adopt the interaction Morawetz inequality $($see Lemma
\ref{morawetz1} below$)$.

$(iii)$ Finally, we also need that the nonlinearity obeys a certain
smoothness condition; more precisely, we ask that $s_c<2+p$ when $p$
is not an even integer. The role of this restriction is to allow us
to take $(s_c-1)$-many derivatives of the nonlinearity $f(u)$. This
is in sharp contrast with NLS, where the restriction for the
regularity $s_c<1+p$ when $p$ is not an even integer. The main
reason is the Strichartz estimate since there is the smoothing
effect for all higher-order nonlinear Schr\"odinger equations, see
Proposition 2 in \cite{MZ}. This enables us to consider $s_c<2+p$
for $p$ being not even integer in FNLS.
\end{remark}

\subsection{The outline of the proof of Theorem \ref{theorem}}

 For each $E>0$, let us define
$\Lambda(E)$ to be the quantity
$$\Lambda(E):=\sup\Big\{S_I(u):~u:~I\times\R^d\to \C\ \text{such\ that}\
\sup_{t\in I}\big\|u\big\|_{\dot H^{s_c}_x(\R^d)}\leq E\Big\},$$
where $u$ ranges over all solutions to \eqref{equ1.1} on the
spacetime slab $I\times\R^d$ with $\big\|u\big\|_{\dot
H^{s_c}(\R^d)}\leq E.$ Thus, $\Lambda:\ [0,+\infty)\to [0,+\infty)$
is a non-decreasing function. Furthermore, from  the small data
theory, see Proposition \ref{lwph}, one has
$$\Lambda(E)\lesssim E\quad \text{for}\quad E\leq\eta_0,$$
where $\eta_0=\eta(d)$ is the threshold from the small data theory.

From the stability theory (see Corollary \ref{long1} below), we know
that $\Lambda$ is continuous. Thus, there is a unique critical
$E_c\in(0,+\infty]$ such that $\Lambda(E)<+\infty$ for $E<E_c$ and
$\Lambda(E)=+\infty$ for $E\geq E_c$. In particular, if
$u:~I\times\R^d\to \C$ is a maximal-lifespan solution to
\eqref{equ1.1} satisfying $\sup\limits_{t\in I}\big\|u\big\|_{\dot
H^{s_c}_x(\R^d)}< E_c,$ then $u$ is global and moreover,
$$S_\R(u)\leq L\big(\big\|u\big\|_{L_t^\infty(\R;\dot H^{s_c}(\R^d))}\big).$$ Therefore, the proof of Theorem \ref{theorem} is
equivalent to show $E_c=+\infty.$ We argue by contradiction. The
failure of Theorem \ref{theorem} would imply the existence of very
special class of solutions; that is the almost periodicity modulo
symmetries:

\begin{definition}\label{apms}
Let $s_c\geq1.$ A solution  $u$ to \eqref{equ1.1} with
maximal-lifespan $I$ is called almost periodic modulo symmetries if
$u$ is bounded in $\dot{H}_x^{s_c}(\R^d)$ and there exist functions
$N(t):~I\to\R^+,~x(t):~I\to\R^d$ and $C(\eta):\R^+\to\R^+$ such that
for all $t\in I$ and $\eta>0$,
\begin{equation}\label{apss}
\int_{|x-x(t)|\geq\frac{C(\eta)}{N(t)}}\big||\nabla|^{s_c}
u(t,x)\big|^2dx\leq\eta
\end{equation}
and
\begin{equation}\label{apsf}
\int_{|\xi|\geq C(\eta)N(t)}|\xi|^{2s_c}\cdot\big|\hat
u(t,\xi)\big|^2d\xi\leq\eta.
\end{equation}
We refer to the function $N(t)$ as the frequency scale function for
the solution $u$, to $x(t)$ as the spatial center function, and to
$C(\eta)$ as the compactness modules function.
\end{definition}
\begin{remark}\label{rem1.1}
By Ascoli-Arzela Theorem, $u$ is almost periodic modulo symmetries
if and only if the set
$$\Big\{N(t)^{s_c-\frac{d}2}u\Big(t,x(t)+\frac{x}{N(t)}\Big),~t\in
I\Big\}$$ falls in a compact set in $\dot{H}^{s_c}_x(\R^d)$. The
following are  consequences of this statement. If $u$ is almost
periodic modulo symmetries, then
 there exists $c(\eta)>0$ such that
\begin{equation}\label{apss1}
\int_{|x-x(t)|\leq\frac{c(\eta)}{N(t)}}\big||\nabla|^{s_c}
u(t,x)\big|^2dx\leq\eta
\end{equation}
and
\begin{equation}\label{xiaoc}
\int_{|\xi|\leq c(\eta)N(t)}|\xi|^{2s_c}\cdot|\hat
u(t,\xi)|^2d\xi\leq\eta.
\end{equation}
\end{remark}

By the same argument as in \cite{MiaoXu,Pau}, we can show that if
Theorem \ref{theorem} fails, then we will inevitably encounter at
least one of the following three enemies.
\begin{theorem}[Three enemies, \cite{MiaoXu,Pau}]\label{three} Suppose $d\geq9$ is
such that Theorem \ref{theorem} fails, that is, $E_{c}<+\infty.$
Then there exists a maximal-lifespan solution $u:~I\times\R^d\to\C$,
which is almost periodic modulo symmetries, with $S_I(u)=+\infty.$
Furthermore, we can also ensure that the lifespan $I$ and the
frequency scale function $N(t):~I\to\R^+$ satisfy one of the
following three scenarios:
\begin{enumerate}
\item $($Finite time blowup$)$ Either $|\inf(I)|<+\infty$ or $\sup(I)<+\infty.$

\item $($Soliton-like solution$)$ $I=\R$ and $N(t)=1$ for all $t\in\R.$

\item $($Low-to-high frequency cascade$)$ $I=\R$,
$$\inf_{t\in\R}N(t)\geq1,~\text{and}~\varlimsup_{t\to\infty}N(t)=+\infty.$$
\end{enumerate}
\end{theorem}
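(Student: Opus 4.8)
\emph{Proof plan.} This is the standard concentration-compactness reduction; I would follow \cite{MiaoXu,Pau} for the fourth-order setting and \cite{KM,KV2010,Mu} for the analogous arguments for NLS, and the plan has three steps. First I would establish a Palais--Smale condition modulo symmetries. Assuming $E_c<+\infty$, the definition of $E_c$ together with the small-data theory (Proposition \ref{lwph}) lets me pick solutions $u_n:I_n\times\R^d\to\C$ and times $t_n\in I_n$ with $\sup_{t\in I_n}\|u_n(t)\|_{\dot H^{s_c}_x}\to E_c$ and $S_{\geq t_n}(u_n)=S_{\leq t_n}(u_n)=+\infty$; after a time translation I take $t_n=0$. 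Applying the linear profile decomposition for the biharmonic propagator $e^{it\Delta^2}$ in $\dot H^{s_c}_x$ to the bounded sequence $(u_n(0))_n$, forming the corresponding nonlinear profiles, and invoking the stability theory (Corollary \ref{long1}) together with the minimality of $E_c$, I would argue that all but one profile must be trivial and that the surviving profile, after normalizing its scaling, spatial and time parameters, converges in $\dot H^{s_c}_x$ to some $\phi$. The maximal-lifespan solution with data $\phi$ is then a minimal blow-up solution $u$ obeying $\sup_t\|u(t)\|_{\dot H^{s_c}_x}=E_c$ and $S_I(u)=+\infty$ in both time directions.

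Second, I would upgrade $u$ to an almost periodic solution: re-running the profile-decomposition argument along an arbitrary sequence of times in the lifespan of $u$ shows that $\{u(t)\}$ is precompact in $\dot H^{s_c}_x$ modulo scaling and translation, which by Remark \ref{rem1.1} is exactly almost periodicity and supplies the functions $N(t)$, $x(t)$ and $C(\eta)$. Third, I would normalize $N(t)$: using the local theory one shows $N(t)$ is locally constant on the natural time scale $N(t)^{-4}$, and then a subsequence-and-rescale argument (exploiting the scaling \eqref{scale}, under which the $\dot H^{s_c}$-norm is invariant) reduces to a minimal blow-up solution whose frequency scale function is of one of the three stated types: either the lifespan is finite on one side (scenario 1); or $I=\R$ with $1\leq\inf_t N(t)\leq\sup_t N(t)<\infty$, which after one more rescaling becomes $N\equiv1$ (scenario 2); or $I=\R$ with $\inf_t N(t)\geq1$ and $\varlimsup_{t\to\infty}N(t)=+\infty$ (scenario 3).

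The hard part is the first step. Concretely it requires: (a) a linear profile decomposition for $e^{it\Delta^2}$ in the critical space $\dot H^{s_c}_x$, which rests on an inverse (refined) Strichartz inequality adapted to the fourth-order dispersion and to the range $s_c\geq1$; (b) the asymptotic orthogonality of the nonlinear profiles in the relevant Strichartz norms; and (c) showing that a finite sum of nonlinear profiles is a good approximate solution to \eqref{equ1.1}, so that Corollary \ref{long1} applies. Since the excerpt attributes Theorem \ref{three} to \cite{MiaoXu,Pau}, I would import these ingredients from there; the only point that differs from the energy-critical FNLS is carrying the extra regularity $s_c\geq1$ (and the smoothness restriction $s_c<2+p$ when $p$ is not an even integer) through the nonlinear estimates, which is handled by the fractional chain rule together with the smoothing Strichartz estimate of \cite{MZ}.
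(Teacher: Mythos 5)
Your plan is exactly the argument the paper relies on: the paper offers no proof of Theorem \ref{three} beyond the citation to \cite{MiaoXu,Pau}, i.e.\ the standard Kenig--Merle/Killip--Tao--Visan reduction via a linear profile decomposition for $e^{it\Delta^2}$ in $\dot H^{s_c}_x$, nonlinear profiles plus the stability result (Corollary \ref{long1}) to extract a minimal almost periodic solution, and then the local-constancy-of-$N(t)$ and rescaling argument to normalize to the three scenarios. Your identification of the ingredients to import (inverse Strichartz/profile decomposition, orthogonality, approximate-solution step) and of the only new wrinkle (propagating the regularity $s_c\geq1$, with $s_c<2+p$ for non-even $p$, through the nonlinear estimates) matches the paper's intent, so the proposal is correct and essentially the same approach.
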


 In view of this theorem, our
goal is to preclude  the possibilities of all the scenarios.

We also need the following Duhamel formula, which is important for
showing the additional decay and negative regularity in Section 4.
This is a robust consequence of almost periodicity modulo
symmetries; see, for example, \cite{CKSTT07,MiaoXuZhao,Pau}.

\begin{lemma} [No-waste Duhamel formula] Let $u: I\times\R^d\to\C$ be a maximal-lifespan solution which is
almost periodic modulo symmetries. Then, for all $t\in I,$ there
holds that
\begin{equation}\label{nwd}
\begin{split}
u(t)=&\lim_{T\nearrow\sup(I)}i\int_t^{T}e^{i(t-s)\Delta^2}f(u)(s)ds\\
=&-\lim_{T\searrow\inf(I)}i\int_T^te^{i(t-s)\Delta^2}f(u)(s)ds
\end{split}
\end{equation}
as weak limits in $\dot{H}_x^{s_c}(\R^d)$.
\end{lemma}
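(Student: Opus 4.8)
The plan is to follow the standard route for almost periodic solutions. By the time reversal symmetry of \eqref{equ1.1}, I would prove only the first identity in \eqref{nwd}; the second follows from the identical argument run towards $\inf(I)$ (equivalently, by applying the first to the solution $(t,x)\mapsto\overline{u(-t,x)}$). Fix $t\in I$. For every finite $\tau\in I$ with $\tau>t$, the Duhamel formula \eqref{duhamel.1} with $t_0=\tau$ gives
\[
u(t)=e^{i(t-\tau)\Delta^2}u(\tau)+i\int_t^\tau e^{i(t-s)\Delta^2}f(u)(s)\,ds
\]
as an identity in $\dot H^{s_c}_x(\R^d)$; hence $i\int_t^\tau e^{i(t-s)\Delta^2}f(u)(s)\,ds=u(t)-e^{i(t-\tau)\Delta^2}u(\tau)$, and the lemma reduces to showing that
\[
e^{i(t-\tau)\Delta^2}u(\tau)\ \rightharpoonup\ 0\quad\text{weakly in}\ \dot H^{s_c}_x(\R^d)\ \text{ as }\ \tau\nearrow\sup(I).
\]

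To prove this I would use $\sup_\tau\|u(\tau)\|_{\dot H^{s_c}_x}<\infty$ and test against $\phi$ in the dense subclass of $\dot H^{-s_c}_x(\R^d)$ of Schwartz functions with $\widehat\phi$ supported in a fixed annulus $\{0<a\le|\xi|\le b\}$, arguing along an arbitrary sequence $\tau_n\nearrow\sup(I)$. By almost periodicity modulo symmetries (Definition \ref{apms} and Remark \ref{rem1.1}) the rescaled profiles $v_n:=N(\tau_n)^{s_c-\frac d2}\,u\big(\tau_n,\,x(\tau_n)+\tfrac{\cdot}{N(\tau_n)}\big)$ lie in a fixed compact set $K\subset\dot H^{s_c}_x(\R^d)$, which is therefore uniformly tight in frequency. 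Since $e^{i\sigma\Delta^2}$ intertwines with the scaling \eqref{scale} under the substitution $\sigma\mapsto N(\tau_n)^4\sigma$ and commutes with translations, one rewrites
\[
\big\langle e^{i(t-\tau_n)\Delta^2}u(\tau_n),\,\phi\big\rangle=\big\langle e^{i\sigma_n\Delta^2}v_n,\,\psi_n\big\rangle,\qquad \sigma_n:=N(\tau_n)^4(t-\tau_n),
\]
where $\psi_n$ is obtained from $\phi$ by the adjoint symmetry, so that $\|\psi_n\|_{\dot H^{-s_c}_x}=\|\phi\|_{\dot H^{-s_c}_x}$ and $\widehat{\psi_n}$ is supported in $\{a\le N(\tau_n)|\xi|\le b\}$.

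Passing to a subsequence I may assume $N(\tau_n)\to N_\infty\in[0,+\infty]$ and split into three cases. If $N_\infty\in\{0,+\infty\}$, the frequency support of $\psi_n$ escapes to $0$ or to $\infty$, while $e^{i\sigma_n\Delta^2}$ is a unit-modulus Fourier multiplier and $K$ is uniformly tight, so $\langle e^{i\sigma_n\Delta^2}v_n,\psi_n\rangle\to0$. If $N_\infty\in(0,+\infty)$ and $\sup(I)=+\infty$, then $|\sigma_n|\to\infty$, and I would invoke the biharmonic dispersive decay $\|e^{i\sigma\Delta^2}g\|_{L^\infty_x}\lesssim|\sigma|^{-d/4}\|g\|_{L^1_x}$ \cite{Ben-Artzi}: approximating $v_n$ first by its limit in $K$ and then by a function whose Fourier transform is smooth and compactly supported away from the origin — the spatial translations hidden in $\psi_n$ being harmless since they preserve $L^1_x$ and $L^\infty_x$ — one again gets $\langle e^{i\sigma_n\Delta^2}v_n,\psi_n\rangle\to0$. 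Finally, if $N_\infty\in(0,+\infty)$ and $\sup(I)<+\infty$, then $N(\tau_n)$ is bounded near $\sup(I)$; but the local well-posedness theory (Proposition \ref{lwph}) endows the solution started at time $\tau_n$ with a lifespan of length $\gtrsim N(\tau_n)^{-4}$, which is bounded below, so $u$ extends past $\sup(I)$, contradicting maximality, and this case is vacuous. In every case $\langle e^{i(t-\tau_n)\Delta^2}u(\tau_n),\phi\rangle\to0$, proving the weak convergence and hence the lemma.

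The main obstacle I expect is the weak-convergence claim itself, which requires marrying the purely qualitative dispersive decay of $e^{i\sigma\Delta^2}$ to the quantitative, uniform compactness modulo symmetries. The point to watch is the scaling substitution $\sigma\mapsto N(\tau)^4\sigma$: it is precisely this that forces one to separate the regime $N(\tau)^4|t-\tau|\to\infty$ (handled by dispersion on a compact set of profiles) from the regime where the frequency scale runs off to $0$ or $\infty$ (handled by frequency separation and tightness); once this split is made, the wandering spatial centers $x(\tau)$ cause no trouble because translations are isometries of all norms involved. The finite-time subcase is not about the Duhamel formula at all — it is the familiar fact that an almost periodic solution cannot blow up in finite time while keeping its frequency scale bounded.
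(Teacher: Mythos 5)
The paper never proves this lemma at all: it simply cites \cite{CKSTT07,MiaoXuZhao,Pau}, so there is no in-house argument to compare against, and your proof is essentially the standard one from those sources. You reduce \eqref{nwd} to the weak vanishing of $e^{i(t-\tau)\Delta^2}u(\tau)$ in $\dot H^{s_c}_x$ as $\tau\nearrow\sup(I)$, conjugate by the symmetries so that the rescaled profiles $v_n$ lie in a fixed compact set $K$, and split according to the limit of $N(\tau_n)$, using frequency tightness of $K$ when the scale degenerates and the biharmonic dispersive estimate \eqref{dispers} when $N(\tau_n)^4|t-\tau_n|\to\infty$. The bookkeeping checks out: $\|\psi_n\|_{\dot H^{-s_c}_x}=\|\phi\|_{\dot H^{-s_c}_x}$, the Fourier support of $\psi_n$ is the annulus $|\xi|\in[a/N(\tau_n),b/N(\tau_n)]$, and $\|\psi_n\|_{L^1_x}\sim N(\tau_n)^{\frac{d}2-s_c}\|\phi\|_{L^1_x}$ stays bounded in the regime where you invoke dispersion, so each of the three cases closes.

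The one step you should tighten is the finite-endpoint case. You attribute the lifespan lower bound $\gtrsim N(\tau_n)^{-4}$ to Proposition \ref{lwph}, but for a scaling-critical problem the critical norm gives no lower bound on the time of existence, so local well-posedness by itself does not provide this. What does provide it is, once more, almost periodicity: the rescaled data $v_n$ lie in the compact set $K$, for each fixed $v\in K$ one has $\|e^{it\Delta^2}v\|_{Z([0,T])}\to0$ as $T\to0$, and a finite covering of $K$ (or an appeal to the stability result, Corollary \ref{long1}) gives a $\delta(K)>0$ such that the smallness condition \eqref{small1231} holds on $[0,\delta(K)]$ uniformly over $K$; undoing the scaling then yields an existence interval of length $\delta(K)N(\tau_n)^{-4}$ starting at $\tau_n$, which, since $N(\tau_n)$ stays bounded and $\tau_n\to\sup(I)<\infty$, extends $u$ past $\sup(I)$ and contradicts maximality. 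Since you already have $K$ in hand at that point of the argument, this is a local repair rather than a change of strategy, and with it your proof is complete.
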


With this lemma in hand,  we can deduce that the energy or mass of
the finite time blow-up solution is zero and so get a contradiction.
We refer to Section 3 for more details.

In view of the no-waste Duhamel formula and noting that the minimal
$L_t^\infty\dot{H}^{s_c}_x$-norm blowup solution is localized in
both physical and frequency space, we will show that it admits lower
regularity.

\begin{theorem}[Negative regularity in the global case]\label{regular}  Let
$u$ be a global solution to \eqref{equ1.1} which is almost periodic
modulo symmetries in the sense of Theorem \ref{three}. And assume
that $\inf\limits_{t\in\R}N(t)\geq1$, then there exists a constant
$\al>0$ such that for any $0<\varepsilon<\al$
\begin{equation}
u\in L_t^\infty(\R;~\dot H^{-\varepsilon}_x(\R^d)).
\end{equation}
\end{theorem}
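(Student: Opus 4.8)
The plan is to bootstrap from the a priori $\dot H^{s_c}_x$-bound down to $\dot H^{-\varepsilon}_x$ by repeatedly inserting the no-waste Duhamel formula \eqref{nwd} and estimating the Duhamel integral via Strichartz and the frequency/space localization of an almost periodic solution. First I would set up Littlewood--Paley projections $P_{\leq N}$, $P_N$ and record the consequence of \eqref{apsf} together with $\inf_t N(t)\geq1$: for any $\eta>0$ there is a frequency $N_0=N_0(\eta)$ such that $\||\nabla|^{s_c}P_{>N_0}u\|_{L^\infty_t L^2_x}\leq\eta$, so the high-frequency part of $u$ is small and the solution is essentially concentrated at frequencies $\lesssim N_0$. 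The strategy, following \cite{KTV2009,KVZ2008,MiaoXuZhao,Pau}, is to prove a sequence of ``a priori estimates at low frequency'': define $\mathcal A(N):=N^{s_c-s}\|P_N u\|_{L^\infty_t L^2_x}$ (or an analogous quantity in a suitable Strichartz-type norm) for small $N$, show it is finite, and then use \eqref{nwd} to express $P_N u(t)$ as $\pm\lim i\int e^{i(t-s)\Delta^2}P_N f(u)(s)\,ds$, apply the dispersive/Strichartz bound for the biharmonic propagator on this Duhamel integral, expand $f(u)=|u|^p u$ via the fractional chain/product rule, and feed back the bounds to gain a fixed power of $N$ each iteration. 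Iterating finitely many times pushes the regularity below zero, i.e.\ yields $u\in L^\infty_t\dot H^{-\varepsilon}_x$ for some small $\varepsilon>0$.

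More concretely, I would proceed in three steps. Step 1: decompose $u=u_{lo}+u_{hi}=P_{\leq N_0}u+P_{>N_0}u$ and control the high part trivially; the real work is on the low part. Step 2 (the recursion): estimate $\|P_N u\|_{L^\infty_t L^2_x}$ for $N\leq N_0$ by placing the $P_N$ Duhamel integral in $L^\infty_t L^2_x$, using a Bernstein/Strichartz bound of the form $\|P_N\int_t^T e^{i(t-s)\Delta^2}F(s)\,ds\|_{L^\infty_t L^2_x}\lesssim N^{\theta}\|F\|_{L^{q'}_t L^{r'}_x}$ for a well-chosen admissible pair, then bound $\|P_N f(u)\|_{L^{q'}_t L^{r'}_x}\lesssim \|u\|^p_{L^{\infty}_t L^{a}_x}\|u\|_{(\text{some norm})}$ using Hölder in space and the fact that $\|u\|_{L^\infty_t L^a_x}<\infty$ for the relevant Lebesgue exponents coming from Sobolev embedding on the $\dot H^{s_c}$-bounded, frequency-localized solution. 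Each pass replaces the exponent $s$ by $s-\delta$ for some fixed $\delta>0$ depending only on $d,p$; after $\lceil s_c/\delta\rceil$ steps one reaches negative regularity. Step 3: verify the bootstrap actually closes — that the quantities $\mathcal A(N)$ are a priori finite (not merely formally estimated), which is where the weak-limit form of \eqref{nwd} and a limiting argument (truncating the time integral and passing $T\to\sup I$) are needed, together with summing the dyadic pieces $\sum_{N\leq N_0}\|P_N u\|_{L^\infty_t\dot H^{-\varepsilon}_x}<\infty$.

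The main obstacle I expect is making the recursion quantitatively self-consistent in high dimensions: one must choose the Strichartz exponents so that (a) the biharmonic smoothing/dispersive estimate supplies a genuine positive power of $N$ when projecting the Duhamel term to low frequency, (b) the nonlinearity $f(u)=|u|^p u$, after the fractional product rule, lands in a dual Strichartz space using only norms that are already controlled (namely $L^\infty_t\dot H^{s_c}_x$ and its Sobolev consequences, plus the inductively improved low-frequency norm), and (c) the resulting gain $\delta$ is strictly positive and uniform over the iteration. This is exactly the balance flagged in Remark~(i): the Hölder bookkeeping forces $L^\infty_t L^q_x$ control with $2p\leq q<\tfrac{pd}{4}$, and both inequalities can be met simultaneously only when $d\geq9$. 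A secondary technical point is that when $p$ is not an even integer one can only differentiate $f$ up to order $s_c$ (the $s_c<2+p$ hypothesis), so in the fractional chain rule one must never demand more than $(s_c-1)$ derivatives hit the nonlinearity — using here the extra biharmonic smoothing noted in Remark~(iii) — and the whole argument must be arranged so that the derivative count stays within this budget at every stage of the induction.
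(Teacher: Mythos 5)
Your outline correctly identifies the overall iteration scheme (gain a fixed positive power of $N$ at low frequencies, convert it into a regularity gain, and iterate finitely many times, which is exactly the role of Lemma \ref{lregd1} and \eqref{guijie} in the paper), but it omits the one device that makes such a frequency gain possible in an $L^2_x$-based norm: the \emph{double Duhamel trick}. You propose to bound $\|P_N u\|_{L^\infty_t L^2_x}$ by applying a Bernstein/Strichartz estimate to the one-sided Duhamel integral $\int_t^{T} e^{i(t-s)\Delta^2}P_N f(u)(s)\,ds$. This cannot close: $e^{it\Delta^2}$ is unitary on $L^2_x$, so no dispersive decay is available at the target exponent, and a Strichartz bound would require $P_N f(u)$ in a dual Strichartz space globally in time (e.g.\ $L^2_t L_x^{\frac{2d}{d+2}}$ on $[t,\infty)$), which an almost periodic, non-scattering solution does not provide --- all one has is $L^\infty_t L^r_x$ control, and the infinite time integral then diverges. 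The paper instead pairs the forward and backward no-waste Duhamel representations \eqref{nwd}, writing $\||\nabla|^s u_N(0)\|_{L^2_x}^2$ as a double integral over $t>0>\tau$ of $\big\langle e^{i(t-\tau)\Delta^2}|\nabla|^s P_N f(u)(t),\,|\nabla|^s P_N f(u)(\tau)\big\rangle$, and bounds this kernel two ways: by Bernstein, giving $N^{2d(\frac1r-\frac12)}$, and by the dispersive estimate \eqref{dispers}, giving $(t-\tau)^{-d(\frac1{2r}-\frac14)}$. The double time integral converges precisely when $\frac{pd}{2q}>2$, i.e.\ $q<\frac{pd}{4}$, which together with $q\geq 2p$ is the source of $d\geq 9$. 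You quote this numerology from the remark following Theorem \ref{theorem}, but your single-Duhamel scheme never produces the mechanism that requires it; the step ``place the $P_N$ Duhamel integral in $L^\infty_t L^2_x$ and gain $N^\theta$'' would fail as stated.

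There is a second, related gap. Your H\"older bookkeeping assumes $\|u\|_{L^\infty_t L^a_x}<\infty$ for ``the relevant Lebesgue exponents coming from Sobolev embedding,'' but the $\dot H^{s_c}_x$ bound only embeds into $L^{\frac{pd}{4}}_x$, and that single exponent is not enough to put $|\nabla|^s f(u)$ into $L^\infty_t L^r_x$ with $r=\frac{2q}{q+2p}$ for $q$ strictly below $\frac{pd}{4}$, as required by the convergence condition above. Proving $u\in L^\infty_t L^q_x$ for $q$ down to $\max\{2p,\frac{2d}{d-4}+\}$ is the ``additional decay'' step (Propositions \ref{adecay1} and \ref{adecay2}), which is itself a substantial argument: a one-sided Duhamel estimate in $L^q_x$ for $q>\frac{2d}{d-4}$ (where the dispersive decay \emph{is} time-integrable), a dyadic recurrence for $A_q(N)=N^{\frac dq-\frac4p}\|u_N\|_{L^\infty_t L^q_x}$, and the acausal Gronwall inequality (Lemma \ref{gronwall1}), with separate treatment of $p$ even, $p\le 1$, and $p>1$ to respect the limited smoothness of the nonlinearity. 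Your proposal compresses this into one sentence and treats it as routine; without it the double Duhamel step has no input, and without the double Duhamel step the iteration you describe has no engine.
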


Combining this theorem with interaction Morawetz estimate and
interpolation, we will get a contradiction for the global almost
periodic solutions in the sense of Theorem \ref{three}. Thus, we
conclude Theorem \ref{theorem}. We refer to Section 3 for more
details.

\vskip 0.2in
 The paper is organized as follows. In Section $2$, we
deal with the local theory
 for the equation \eqref{equ1.1}.  In Section $3$, we exclude three scenarios in the sense of Theorem
\ref{three} under the assumption that Theorem \ref{regular} holds.
In Section $4$, we show the global solutions which are almost
periodic modulus symmetries   admit the negative regularity, that
is, Theorem \ref{regular}. Hence we conclude the proof of Theorem
\ref{theorem}. Finally, we show the stability in Appendix.

\subsection{Notations}
Finally, we conclude the introduction by giving some notations which
will be used throughout this paper. To simplify the expression of
our inequalities, we introduce some symbols $\lesssim, \thicksim,
\ll$. If $X, Y$ are nonnegative quantities, we use $X\lesssim Y $ or
$X=O(Y)$ to denote the estimate $X\leq CY$ for some $C$ which may
depend on the critical energy $E_{c}$  but not on any parameter such
as $\eta$  and $\rho$, and $X \thicksim Y$ to denote the estimate
$X\lesssim Y\lesssim X$. We use $X\ll Y$ to mean $X \leq c Y$ for
some small constant $c$ which is again allowed to depend on $E_{c}$.
We use $C\gg1$ to denote various large finite constants, and $0< c
\ll 1$ to denote various small constants. Any summations over
capitalized variables such as $M_j$ are presumed to be dyadic, i.e.,
these variables range over numbers of the form $2^k$ for $k\in
\mathbb{Z}$. For any $r, 1\leq r \leq \infty$, we denote by $\|\cdot
\|_{r}$ the norm in $L^{r}=L^{r}(\mathbb{R}^d)$ and by $r'$ the
conjugate exponent defined by $\frac{1}{r} + \frac{1}{r'}=1$. We
denote  $a\pm$ to be any quantity of the form $a\pm\epsilon$ for any
$\epsilon>0.$

The Fourier transform on $\mathbb{R}^d$ is defined by
\begin{equation*}
\aligned \widehat{f}(\xi):= \big( 2\pi
\big)^{-\frac{d}{2}}\int_{\mathbb{R}^d}e^{- ix\cdot \xi}f(x)dx ,
\endaligned
\end{equation*}
giving rise to the fractional differentiation operators
$|\nabla|^{s}$ and $\langle\nabla\rangle^s$,  defined by
\begin{equation*}
\aligned
\widehat{|\nabla|^sf}(\xi):=|\xi|^s\hat{f}(\xi),~~\widehat{\langle\nabla\rangle^sf}(\xi):=\langle\xi\rangle^s\hat{f}(\xi),
\endaligned
\end{equation*} where $\langle\xi\rangle:=1+|\xi|$.
This helps us to define the homogeneous and inhomogeneous Sobolev
norms
\begin{equation*}
\big\|f\big\|_{\dot{H}^s_x(\R^d)}:= \big\|
|\xi|^s\hat{f}\big\|_{L^2_x(\R^d)},~~\big\|f\big\|_{{H}^s_x(\R^d)}:=
\big\| \langle\xi\rangle^s\hat{f}\big\|_{L^2_x(\R^d)}.
\end{equation*}

We will also need the Littlewood-Paley projection operators.
Specifically, let $\varphi(\xi)$ be a smooth bump function adapted
to the ball $|\xi|\leq 2$ which equals 1 on the ball $|\xi|\leq 1$.
For each dyadic number $N\in 2^{\mathbb{Z}}$, we define the
Littlewood-Paley operators
\begin{equation*}
\aligned \widehat{P_{\leq N}f}(\xi)& :=
\varphi\Big(\frac{\xi}{N}\Big)\widehat{f}(\xi), \\
\widehat{P_{> N}f}(\xi)& :=
\Big(1-\varphi\Big(\frac{\xi}{N}\Big)\Big)\widehat{f}(\xi), \\
\widehat{P_{N}f}(\xi)& :=
\Big(\varphi\Big(\frac{\xi}{N}\Big)-\varphi\Big(\frac{2\xi}{N}\Big)\Big)\widehat{f}(\xi).
\endaligned
\end{equation*}
Similarly we can define $P_{<N}$, $P_{\geq N}$, and $P_{M<\cdot\leq
N}=P_{\leq N}-P_{\leq M}$, whenever $M$ and $N$ are dyadic numbers.
We will frequently write $f_{\leq N}$ for $P_{\leq N}f$ and
similarly for the other operators.

The Littlewood-Paley operators commute with derivative operators,
the free propagator, and the conjugation operation. They are
self-adjoint and bounded on every $L^p_x$ and $\dot{H}^s_x$ space
for $1\leq p\leq \infty$ and $s\geq 0$, moreover, they also obey the
following
 Bernstein estimates

\begin{lemma}[Bernstein estimates]
\begin{eqnarray*}\label{bernstein}
 \big\| P_{\geq N} f \big\|_{L^p} & \lesssim & N^{-s} \big\|
|\nabla|^{s}P_{\geq N} f \big\|_{L^p}, \\
\big\||\nabla|^s P_{\leq N} f \big\|_{L^p} & \lesssim  & N^{s}
\big\|
P_{\leq N} f \big\|_{L^p},  \\
\big\||\nabla|^{\pm s} P_{N} f \big\|_{L^p} & \thicksim & N^{\pm s}
\big\|
P_{N} f \big\|_{L^p},  \\
\big\| P_{\leq N} f \big\|_{L^q} & \lesssim &
N^{\frac{d}{p}-\frac{d}{q}} \big\|
P_{\leq N} f \big\|_{L^p},  \\
\big\| P_{ N} f \big\|_{L^q} & \lesssim &
N^{\frac{d}{p}-\frac{d}{q}} \big\|P_{ N} f \big\|_{L^p},
\end{eqnarray*}
where  $s\geq 0$ and $1\leq p\leq q \leq \infty$.

 \end{lemma}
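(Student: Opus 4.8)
The plan is to establish all five inequalities by the standard device of writing each Littlewood--Paley projection as convolution with an $L^1$-normalized rescaled kernel and then invoking Young's convolution inequality together with the scaling of the kernel. First I would fix notation: let $\psi(\xi):=\varphi(\xi)-\varphi(2\xi)$, so that $\widehat{P_N f}=\psi(\xi/N)\hat f$, and observe that $\psi$ is smooth, compactly supported in the annulus $\{1/2\le|\xi|\le 2\}$, and that $P_N f = f * (N^d \check\psi(N\,\cdot\,))$ where $\check\psi$ denotes the inverse Fourier transform of $\psi$. Since $\psi\in C_c^\infty(\R^d)$, its inverse transform $\check\psi$ is a Schwartz function, hence lies in $L^r(\R^d)$ for every $1\le r\le\infty$, and the rescaled kernel $K_N:=N^d\check\psi(N\,\cdot\,)$ satisfies $\|K_N\|_{L^1}=\|\check\psi\|_{L^1}$, independent of $N$. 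The same remarks apply to $P_{\le N}$ with kernel built from $\varphi$ (note $\check\varphi$ is Schwartz) and to $P_{\ge N}=I-P_{\le N}$.

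Next I would prove the "gain of derivatives on high frequencies" estimate (the first line): since $P_{\ge N}$ has Fourier support in $\{|\xi|\ge N\}$, one can write $P_{\ge N}f = |\nabla|^{-s}\, m_s(\nabla/N)\, |\nabla|^s P_{\ge N} f$ with multiplier $m_s(\xi):=N^{s}|\xi|^{-s}\bigl(1-\varphi(\xi)\bigr)$, wait, more cleanly: define the operator $T_s$ with symbol $\tilde m_s(\xi) := |\xi|^{-s}\bigl(1-\varphi(\xi/N)\bigr)N^{-s}$; actually the slick route is to note $\widehat{P_{\ge N}f}(\xi) = \bigl(N/|\xi|\bigr)^{s}\,\bigl(1-\varphi(\xi/N)\bigr)\,N^{-s}|\xi|^{s}\hat f(\xi)$, so $P_{\ge N}f = N^{-s}\, L_N\bigl(|\nabla|^s P_{\ge N} f\bigr)$ where $L_N$ has symbol $\ell(\xi):=(N/|\xi|)^{s}(1-\varphi(\xi/N))$, which by scaling equals $\ell_1(\xi/N)$ with $\ell_1(\xi)=|\xi|^{-s}(1-\varphi(\xi))$ a smooth function decaying like $|\xi|^{-s}$ at infinity. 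To control $L_N$ on $L^p$ uniformly in $N$ I would insert an extra harmless Littlewood--Paley-type factor so the symbol becomes compactly supported away from the origin and smooth, making its inverse transform integrable; concretely, decomposing $1-\varphi(\xi)$ dyadically and summing the resulting bounds (each with an $L^1$ kernel of comparable mass, the sum converging because of the $|\xi|^{-s}$ decay with $s\ge 0$ — and for $s=0$ the statement is trivial) yields $\|L_N g\|_{L^p}\lesssim\|g\|_{L^p}$. This gives the first inequality. The second inequality ($|\nabla|^s P_{\le N}$ gains $N^s$) and the third ($|\nabla|^{\pm s}P_N$ is comparable to $N^{\pm s}$) are handled identically: the symbols $|\xi|^s\varphi(\xi/N)=N^s\,(|\xi|/N)^s\varphi(\xi/N)$ and $|\xi|^{\pm s}\psi(\xi/N)=N^{\pm s}\,(|\xi|/N)^{\pm s}\psi(\xi/N)$ are, after extracting the explicit power of $N$, of the form $g(\xi/N)$ with $g$ smooth and compactly supported (in a ball, resp.\ an annulus avoiding the origin, so that $|\xi|^{\pm s}$ is smooth on the support), hence have Schwartz inverse transforms with $L^1$ norm independent of $N$; Young's inequality closes it, and for the third estimate the reverse inequality follows by applying the forward one with $\mp s$ in place of $\pm s$ to $P_N f$ (using $P_N P_N=P_{\tilde N}$-type almost-orthogonality, i.e.\ $P_N = \widetilde P_N P_N$ for a fattened projector $\widetilde P_N$).

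Finally, for the two $L^p\to L^q$ "Bernstein inequalities" (lines four and five) with $1\le p\le q\le\infty$, I would again write $P_{\le N}f = f * K_N$ with $K_N = N^d\check\varphi(N\,\cdot\,)$, and apply Young's inequality in the form $\|f*K_N\|_{L^q}\le\|f\|_{L^p}\|K_N\|_{L^r}$ with $\tfrac1q+1=\tfrac1p+\tfrac1r$, i.e.\ $\tfrac1r=1-\tfrac1p+\tfrac1q\in[0,1]$; since $\|K_N\|_{L^r}=N^{d(1-1/r)}\|\check\varphi\|_{L^r}=N^{d/p-d/q}\|\check\varphi\|_{L^r}$ and $\check\varphi$ is Schwartz hence in every $L^r$, this yields exactly $\|P_{\le N}f\|_{L^q}\lesssim N^{d/p-d/q}\|P_{\le N}f\|_{L^p}$ (applying the bound to $P_{\le N}f$ itself, using $P_{\le N}P_{\le N}=P_{\le N}$, so that the right side has the projection on it as stated), and likewise for $P_N$ with kernel from $\psi$. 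I expect the only mildly delicate point — the "main obstacle," such as it is — to be the uniform-in-$N$ boundedness of the multiplier operators $L_N$ appearing in the first three estimates, i.e.\ verifying that the symbols, once the explicit power of $N$ is factored out, really are rescalings $g(\cdot/N)$ of a single fixed Schwartz-class (or at least $L^1$-kernel) multiplier; this is where one must be careful that $|\xi|^{\pm s}$ is smooth on the relevant support (true for $P_N$ and $P_{\le N}$ since their symbols vanish near the origin resp.\ are supported in a ball, but for $P_{\ge N}$ requiring the dyadic decomposition and summation sketched above). Everything else is a direct application of Young's inequality and scaling, so no further computation is needed.
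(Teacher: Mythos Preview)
Your argument is the standard and correct one; the paper itself does not prove this lemma at all but merely states it as a well-known preliminary (Bernstein estimates are quoted without proof, as in most papers on dispersive PDE). So there is nothing to compare against, and your sketch would serve perfectly well as a proof.

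Two minor technical slips worth cleaning up before you call it done. First, $P_{\le N}P_{\le N}\neq P_{\le N}$ in general, since $\varphi$ is smooth rather than a characteristic function; the usual fix is to introduce a fattened projector $\widetilde P_{\le N}$ (built from some $\tilde\varphi\in C_c^\infty$ equal to $1$ on $\operatorname{supp}\varphi$) satisfying $\widetilde P_{\le N}P_{\le N}=P_{\le N}$, and apply the Young-inequality step to that. You already invoke exactly this device for the third estimate, so just use it consistently. Second, in the first estimate your dyadic summation for the multiplier $|\xi|^{-s}(1-\varphi(\xi))$ only converges when $s>0$; you note that $s=0$ is trivial, which is fine, but it would be cleaner to say explicitly that for $s=0$ all three of the first estimates reduce to the $L^p$-boundedness of the projectors themselves. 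With those cosmetic fixes the proof is complete.
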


%%%%%%%%%%%%%%%%%%%%%%%%%%%%%%%%%%%%%%%%%%%%%%%%%%%%%%%%%%%%%%%%%%%
\section{Preliminaries}
 \setcounter{section}{2}\setcounter{equation}{0}

 \subsection{Strichartz estimate and nonlinear estimates}
In this section, we consider the Cauchy problem  for fourth-order
Schr\"odinger equation
\begin{equation} \label{equ2}
    \left\{ \aligned &iu_t+\Delta^2u-f(u)=  0, \\
    &u(0)=u_0.
    \endaligned
    \right.
\end{equation}
The integral equation for the Cauchy problem $(\ref{equ2})$ can be
written as
\begin{equation}\label{inte1}
u(t,x)=e^{i(t-t_0)\Delta^2}u(t_0)-i\int_{t_0}^te^{i(t-s)\Delta^2}f(u(s))ds.
\end{equation}

The biharmonic Schr\"{o}dinger semigroup is defined for any tempered
distribution $g$ by
\begin{equation*}
e^{it\Delta^{2}}g=\mathcal{F}^{-1}e^{it|\xi|^{4}}\mathcal{F}g.
\end{equation*}

Now we recall the dispersive estimate for the biharmonic
Schr\"odinger operator.

\begin{lemma}[Dispersive estimate, \cite{Ben-Artzi}]\label{disp} Let $2\leq q\leq+\infty.$ Then, we have
the following dispersive estimate
\begin{equation}\label{dispers}
\big\|e^{it\Delta^2}f \big\|_{L^q_x(\R^d)} \leq
C|t|^{-\frac{d}4(1-\frac2{q})} \|f\|_{L^{q'}_x(\R^d)}
\end{equation}
for all $t\neq0$ and $2\leq q\leq+\infty,~\frac1q+\frac1{q'}=1$.
\end{lemma}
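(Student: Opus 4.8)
The plan is to obtain the whole range $2\le q\le\infty$ by interpolating between the $L^2$ unitarity bound and the dispersive $L^1\to L^\infty$ endpoint. Since the Fourier multiplier $e^{it|\xi|^4}$ of $e^{it\Delta^2}$ has modulus one, Plancherel's identity gives $\|e^{it\Delta^2}f\|_{L^2_x(\R^d)}=\|f\|_{L^2_x(\R^d)}$, which is \eqref{dispers} for $q=2$. Granting the endpoint bound
\[
\big\|e^{it\Delta^2}f\big\|_{L^\infty_x(\R^d)}\lesssim|t|^{-\frac d4}\,\|f\|_{L^1_x(\R^d)},
\]
the intermediate estimates follow from the Riesz--Thorin interpolation theorem: with $\theta=1-\tfrac2q\in[0,1]$ one has $\tfrac1q=\tfrac{1-\theta}2$ and $\tfrac1{q'}=\tfrac{1+\theta}2$, so interpolating the $L^2\to L^2$ bound (constant $1$) against the $L^1\to L^\infty$ bound (constant $\sim|t|^{-d/4}$) produces exactly the factor $|t|^{-\frac d4(1-\frac2q)}$.

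Thus everything reduces to the $L^1\to L^\infty$ endpoint. Writing $e^{it\Delta^2}f=K_t*f$ with kernel
\[
K_t(x)=c_d\int_{\R^d}e^{i(t|\xi|^4+x\cdot\xi)}\,d\xi
\]
(the distributional inverse Fourier transform of $e^{it|\xi|^4}$, understood as an oscillatory integral), Young's inequality shows that it suffices to prove $\|K_t\|_{L^\infty_x}\lesssim|t|^{-d/4}$. The change of variables $\xi\mapsto|t|^{-1/4}\xi$ gives the exact self-similarity $K_t(x)=|t|^{-d/4}K_{\mathrm{sgn}(t)}\big(|t|^{-1/4}x\big)$, which reduces matters to the single $t$-free claim that the oscillatory integral
\[
I(y):=\int_{\R^d}e^{i(|\xi|^4+y\cdot\xi)}\,d\xi
\]
is bounded uniformly in $y\in\R^d$ (the opposite sign of the phase being identical).

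To bound $I(y)$ I would pass to polar coordinates $\xi=\rho\omega$ and use that the spherical average of $e^{i\rho\,y\cdot\omega}$ is a Bessel kernel, so that with $\nu=\tfrac d2-1$,
\[
I(y)=c_d\int_0^\infty e^{i\rho^4}\,\rho^{d-1}\,\frac{J_\nu(\rho|y|)}{(\rho|y|)^\nu}\,d\rho .
\]
For $|y|\lesssim1$ the amplitude is $O\big((1+\rho)^{-(d-1)/2}\big)$ by the elementary bounds $|J_\nu(s)|\lesssim\min(s^\nu,s^{-1/2})$; the region $\rho\lesssim1$ is handled trivially, and on $\rho\gtrsim1$ repeated integration by parts against $e^{i\rho^4}$ (whose phase has derivative $4\rho^3$ bounded away from $0$) gives a uniformly bounded contribution. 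For $|y|\gg1$ one inserts the asymptotic expansion $\tfrac{J_\nu(s)}{s^\nu}=s^{-\nu-\frac12}\big(a_+e^{is}+a_-e^{-is}\big)+O(s^{-\nu-\frac32})$ for $s\gg1$; the resulting combined phases $\rho^4\pm\rho|y|$ are non-stationary for one sign (integrate by parts) and for the other have a single nondegenerate stationary point at $\rho_0\sim|y|^{1/3}$ with $|\Phi''(\rho_0)|\sim|y|^{2/3}$, so van der Corput's lemma bounds that piece by $\sim|y|^{-d/3}$, hence uniformly, and the error terms are even better. (Alternatively, a direct $\R^d$ argument works: dyadically localize to $|\xi|\sim2^k$, integrate by parts wherever $|\nabla(|\xi|^4+y\cdot\xi)|=|4|\xi|^2\xi+y|$ is comparable to $\max(|\xi|^3,|y|)$, and apply stationary phase near the critical sphere $|\xi|\sim|y|^{1/3}$ using the explicit nondegenerate Hessian $\nabla^2(|\xi|^4)=4|\xi|^2\mathrm{Id}+8\,\xi\otimes\xi$; summing over $k$ yields the same uniform, indeed decaying, bound.)

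The crux, and the only nonroutine point, is precisely this last step: because the phase $|\xi|^4$ degenerates at the origin while its stationary points run off to infinity as $|y|\to\infty$, the stationary-phase estimate must be carried out with uniform control in $y$, in particular in the transition between the low-frequency regime $\rho\lesssim1$ and the stationary regime $\rho\sim|y|^{1/3}$. The complete computation along these lines is carried out in Ben-Artzi, Koch and Saut \cite{Ben-Artzi}.
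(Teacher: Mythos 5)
The paper gives no proof of this lemma at all — it is quoted verbatim from Ben-Artzi--Koch--Saut \cite{Ben-Artzi} — and your outline (Plancherel at $q=2$, the $L^1\to L^\infty$ endpoint via the self-similar kernel $K_t(x)=|t|^{-d/4}K_{\mathrm{sgn}(t)}(|t|^{-1/4}x)$ together with a uniform stationary-phase/Bessel analysis of $\int_{\mathbb{R}^d}e^{i(|\xi|^4+y\cdot\xi)}d\xi$, then Riesz--Thorin) is exactly the standard argument carried out in that reference, so your proposal is correct and essentially the same approach. One small inaccuracy to fix: for $|y|\ll1$ the amplitude $\rho^{d-1}J_\nu(\rho|y|)/(\rho|y|)^{\nu}$ is not $O\big((1+\rho)^{-(d-1)/2}\big)$ (it grows like $\rho^{d-1}$ when $\rho|y|\lesssim1$, e.g.\ at $y=0$), but this does not harm the argument, since your repeated integration by parts against $e^{i\rho^4}$ gains a factor $\rho^{-4}$ per step and after about $d/4$ steps the integral converges absolutely, uniformly in $|y|\lesssim1$.
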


The Strichartz estimates involve the following definitions:
\begin{definition}\label{def1}
A pair of Lebesgue space exponents $(q,r)$ are called
Schr\"{o}dinger admissible for $\mathbb{R}^{1+d}$, or denote by
$(q,r)\in \Lambda_{0}$ when $q,r\geq 2,~(q,r,d)\neq (2,\infty,2)$,
and
\begin{equation}\label{equ21}
\frac{2}{q}=d\Big(\frac{1}{2}-\frac{1}{r}\Big).
\end{equation}

\end{definition}

\begin{definition}\label{def2}
In addition, a pair of Lebesgue space exponents $(\gamma,\rho)$ are called
biharmonic admissible for $\mathbb{R}^{1+d}$ or denote by
$(\gamma,\rho)\in\Lambda_{1}$when $\gamma,\rho\geq
2,~(\gamma,\rho,d)\neq(2,\infty,4)$, and
\begin{equation}\label{equ22}
\frac{4}{\gamma}=d\Big(\frac{1}{2}-\frac{1}{\rho}\Big).
\end{equation}
For a fixed spacetime slab $I\times\R^d$, we define the Strichartz
norm
$$\|u\|_{S^0(I)}:=\sup_{(q,r)\in\Lambda_1}\|u\|_{L_t^q
L_x^r(I\times\R^d)}.$$ We denote $S^0(I)$ to be the closure of all
test functions under this norm and write $N^0(I)$ for the dual of
$S^0(I)$.
\end{definition}

According to the above dispersive estimate, the abstract duality and
interpolation argument(see \cite{KeT98}), we have the following
Strichartz estimates.

\begin{proposition}[Strichartz estimates for Fourth-order Schr\"{o}dinger\cite{MiaoZhang, Pausader}]\label{prop1}
Let $s\geq 0$, suppose that $u(t, x)$ is a solution on $[0,T]$ to
the initial value problem
\begin{align}\label{equ23}
\begin{cases}
(i\partial_{t}+\Delta^{2})u(t,x)=h, \quad (t,x)\in [0,T]\times \mathbb{R}^{d}\\
u(0)=u_{0}(x),
\end{cases}
\end{align}
for some data $u_{0}$ and $T > 0$. Then we have the Strichartz
estimate, for $(q, r), (a, b)\in \Lambda_{0}$
\begin{equation}\label{equ24}
\big\||\nabla|^{s}u\big\|_{L^{q}([0,T];L^{r}(\mathbb{R}^d))}\lesssim\big\||\nabla
|^{s-\frac{2}{q}}u_{0}\big\|_{L^{2}(\mathbb{R}^d)}+
\big\||\nabla|^{s-\frac{2}{q}-\frac{2}{a}}h\big\|_{L^{a^{\prime}}([0,T];L^{b^{\prime}}(\mathbb{R}^d))},
\end{equation}
and for $(\gamma,\rho),(c,d)\in \Lambda_{1}$
\begin{equation}{\label{equ25}}
\|u\|_{L^{\gamma}([0,T];L^{\rho}(\mathbb{R}^d))}\lesssim\|u_{0}\|_{L^{2}(\mathbb{R}^d)}+
\|h\|_{L^{c^{\prime}}([0,T];L^{d^{\prime}}(\mathbb{R}^d))}.
\end{equation}
In particular, we have
\begin{equation}\label{equ26}
\big\||\nabla|^su\big\|_{S^0(I)}\lesssim\big\||\nabla|^su_{0}\big\|_{L^{2}(\mathbb{R}^d)}+
\big\||\nabla|^sh_1\big\|_{N^0(I)}+\big\||\nabla|^{s-1}h_2\big\|_{L_t^2L_x^\frac{2d}{d+2}(I\times\R^d)},
\end{equation}
where $h=h_1+h_2$ and $I=[0,T].$
\end{proposition}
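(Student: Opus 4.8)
The plan is to derive all three estimates from the dispersive bound of Lemma \ref{disp} together with the $L^2_x$-unitarity of $e^{it\Delta^2}$, via the abstract $TT^*$ machinery of Keel--Tao \cite{KeT98}. For the biharmonic-admissible estimate \eqref{equ25}, observe that the family $U(t)=e^{it\Delta^2}$ satisfies the energy bound $\|U(t)f\|_{L^2_x}\lesssim\|f\|_{L^2_x}$ and, by Lemma \ref{disp} at $q=\infty$, the untruncated decay estimate $\|U(t)U(s)^\ast g\|_{L^\infty_x}\lesssim|t-s|^{-d/4}\|g\|_{L^1_x}$. Hence the Keel--Tao theorem applies with decay exponent $\sigma=d/4$; the corresponding $\sigma$-admissibility relation $\tfrac1\gamma+\tfrac{\sigma}{\rho}=\tfrac{\sigma}{2}$ is precisely \eqref{equ22}, and one reads off the homogeneous bound $\|U(t)u_0\|_{L^\gamma_tL^\rho_x}\lesssim\|u_0\|_{L^2_x}$, its dual, and the retarded bound $\big\|\int_{s<t}U(t)U(s)^\ast h(s)\,ds\big\|_{L^\gamma_tL^\rho_x}\lesssim\|h\|_{L^{c'}_tL^{d'}_x}$ for $(\gamma,\rho),(c,d)\in\Lambda_1$, which combined with the Duhamel formula \eqref{inte1} yields \eqref{equ25}. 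The endpoint $\gamma=2$ is admissible here because $d\geq9$ forces $(2,\tfrac{2d}{d-4},d)\neq(2,\infty,4)$, so it is covered by the Keel--Tao endpoint argument; for off-endpoint pairs the retarded estimate follows from the bilinear-form bound by the Christ--Kiselev lemma.

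The step that genuinely uses the fourth-order structure is the Schrödinger-admissible estimate \eqref{equ24}: a pair $(q,r)\in\Lambda_0$ satisfies $\tfrac2q=d(\tfrac12-\tfrac1r)$, so by scaling one must recover $\tfrac2q$ extra derivatives, reflecting the smoothing effect of the biharmonic flow. I would prove the core inequality $\big\||\nabla|^{2/q}e^{it\Delta^2}g\big\|_{L^q_tL^r_x}\lesssim\|g\|_{L^2_x}$ by a Littlewood--Paley decomposition. On the block $P_N$, rescaling $x\mapsto x/N$, $t\mapsto t/N^4$ reduces the dispersive estimate to the non-singular bound $\|e^{it\Delta^2}P_1f\|_{L^\infty_x}\lesssim(1+|t|)^{-d/4}\|f\|_{L^1_x}$, giving $\|e^{it\Delta^2}P_Nf\|_{L^\infty_x}\lesssim\min(N^d,|t|^{-d/4})\|f\|_{L^1_x}$. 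Interpolating this with the trivial $L^2_x$ bound and optimizing the split between the two regimes produces, after multiplication by $N^{2/q}$, a fixed-time operator whose $L^{r'}_x\to L^r_x$ norm decays like $|t|^{-(1-2/q)}$ with an $N$-power that is exactly scaling-critical and therefore cancels. Applying the one-dimensional Hardy--Littlewood--Sobolev inequality in time on each dyadic block (the Keel--Tao endpoint lemma when $q=2$) and summing in $N$ via the square-function/orthogonality estimate yields the claim; duality and the Christ--Kiselev lemma then upgrade it to the dual and retarded forms appearing in \eqref{equ24}.

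Finally, \eqref{equ26} is obtained by specialization. Taking the supremum over $(\gamma,\rho)\in\Lambda_1$ on the left-hand side of \eqref{equ25} produces the $\|\cdot\|_{S^0(I)}$-norm, and dually the $\|\cdot\|_{N^0(I)}$-norm controls the contribution of $h_1$. The second forcing term $h_2$ is handled by invoking \eqref{equ24} with $(q,r)=(\infty,2)$ and $(a,b)=(2,\tfrac{2d}{d-2})\in\Lambda_0$, for which $\tfrac2q=0$, $\tfrac2a=1$, and the dual pair is $L^2_tL^{\frac{2d}{d+2}}_x$; this yields precisely the term $\big\||\nabla|^{s-1}h_2\big\|_{L^2_tL^{2d/(d+2)}_x}$, and adding the three contributions gives \eqref{equ26}.

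I expect the main obstacle to be the second step. In contrast to the first step, which is a direct application of the abstract framework of \cite{KeT98}, establishing the $\tfrac2q$-derivative gain for Schrödinger-admissible pairs requires the frequency-localized dispersive estimate and careful bookkeeping of the dyadic powers to see that the critical $N$-dependence cancels; the endpoint cases (both $\gamma=2$ for $\Lambda_1$ and $q=2$ for $\Lambda_0$, as well as the double-endpoint retarded estimate) are the usual technical nuisance, resolved by the Keel--Tao endpoint argument rather than plain Hardy--Littlewood--Sobolev. These are exactly the points worked out in \cite{MiaoZhang,Pausader}, which we follow.
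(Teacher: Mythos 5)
Your overall architecture (Keel--Tao $TT^\ast$ for the biharmonic-admissible family, a frequency-localized argument for the $\tfrac2q$-derivative gain on Schr\"odinger-admissible pairs, then specialization to \eqref{equ26}) is the standard route of \cite{MiaoZhang,Pausader}; the paper itself offers no proof beyond this citation, so the comparison is with those references. However, the crucial second step as you wrote it would fail. The localized bound you propose, $\|e^{it\Delta^2}P_1f\|_{L^\infty_x}\lesssim(1+|t|)^{-d/4}\|f\|_{L^1_x}$, is true but too weak: rescaling gives $\|e^{it\Delta^2}P_Nf\|_{L^\infty_x}\lesssim N^d(1+N^4|t|)^{-d/4}\|f\|_{L^1_x}$, and after interpolation with the $L^2_x$ bound, for $(q,r)\in\Lambda_0$ (so $d(1-\tfrac2r)=\tfrac4q$) the fixed-time norm is $\|e^{i(t-s)\Delta^2}P_N\|_{L^{r'}_x\to L^r_x}\lesssim\min\big(N^{4/q},|t-s|^{-1/q}\big)$. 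Including the two factors $N^{2/q}$, the $TT^\ast$ time kernel is $\lesssim N^{4/q}\min\big(N^{4/q},|t-s|^{-1/q}\big)$, whose tail equals $(N^4|t-s|)^{1/q}\,|t-s|^{-2/q}$: the $N$-powers do \emph{not} cancel, and a kernel decaying like $|t-s|^{-1/q}$ is not even in $L^{q/2,\infty}_t$, so the bilinear Hardy--Littlewood--Sobolev step on $L^{q'}_t\times L^{q'}_t$ (which requires decay exactly $|t-s|^{-2/q}$, not $|t-s|^{-(1-2/q)}$ as you state) breaks down and the dyadic blocks cannot be summed. The missing ingredient is the stationary-phase estimate on a unit annulus, $\|e^{it\Delta^2}P_Nf\|_{L^\infty_x}\lesssim N^{d}(1+N^4|t|)^{-d/2}\|f\|_{L^1_x}$: at frequency $N$ the Hessian of $|\xi|^4$ has size $N^2$, so the localized flow decays at the Schr\"odinger rate $-d/2$, the interpolated kernel becomes $N^{8/q}(1+N^4|t-s|)^{-2/q}\leq|t-s|^{-2/q}$ uniformly in $N$, and then HLS (Keel--Tao at $q=2$), duality/Christ--Kiselev, and the square-function summation go through exactly as you indicate. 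This improved localized decay is precisely where the smoothing gain comes from and is the content of the cited works.

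A smaller but real oversight concerns \eqref{equ26}: you invoke \eqref{equ24} only with $(q,r)=(\infty,2)$, which controls just the $L^\infty_tL^2_x$ component of $\||\nabla|^su\|_{S^0(I)}$, whereas $S^0(I)$ is a supremum over \emph{all} pairs in $\Lambda_1$, so the $h_2$ contribution must be bounded in every such norm. The standard repair is to apply \eqref{equ24} with a general pair $(\gamma,r)\in\Lambda_0$ and the choice $(a,b)=(2,\tfrac{2d}{d-2})$, then use the Sobolev embedding $\||\nabla|^sv\|_{L^\rho_x}\lesssim\||\nabla|^{s+2/\gamma}v\|_{L^{r}_x}$, which converts the $\Lambda_0$ estimate into the $\Lambda_1$ pair $(\gamma,\rho)$; equivalently, treat the Duhamel term for $h_2$ by combining the $\Lambda_1$ homogeneous estimate with the dual of the endpoint $\Lambda_0$ gain estimate, with the usual extra care at the double endpoint $\gamma=a=2$ where Christ--Kiselev is unavailable. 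With these two corrections your argument matches the proof in \cite{MiaoZhang,Pausader}.
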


The key feature of such lemma is that the spacetime-norm of the
$s$-derivative of $u$ can be estimated by  $(s-1)$-derivative of the
forcing term, which is the consequence of smoothing effect for all
higher-order nonlinear Schr\"odinger equations, see Proposition 2 in
\cite{MZ}. This enables us to consider $s<2+p$ for $p$ being not
even integer. This is in sharp contrast with NLS, where the
restriction for the regularity $s<1+p$ when $p$ is not an even
integer.

Now we give a few nonlinear estimates which will be applied  to show
the local well-posedness which is the first step to obtain the
global time-space estimate that leads to the scattering.

\begin{lemma}
$(i)$ $($Product rule$)$ Let $s\geq0$, and $1<r,p_j,q_j<\infty$ such
that $\frac1r=\frac1{p_i}+\frac1{q_i}~(i=1,2).$ Then, we have
\begin{equation}\label{moser}
\big\||\nabla|^s(fg)\big\|_{L_x^r(\R^d)}\lesssim\|f\|_{{L_x^{p_1}(\R^d)}}\big\||\nabla|^sg
\big\|_{{L_x^{q_1}(\R^d)}}+\big\||\nabla|^sf\big\|_{{L_x^{p_2}(\R^d)}}\|g\|_{{L_x^{q_2}(\R^d)}}.
\end{equation}

$(ii)$ $(C^1$ continuous$)$ Assume that $G\in
C^1(\mathbb{C}),~s\in(0,1],~1<p,p_1,p_2<+\infty,~\frac1p=\frac1{p_1}+\frac1{p_2}.$
Then, we have
\begin{equation}\label{fraclsfz}
\big\||\nabla|^s
G(u)\big\|_p\lesssim\|G'(u)\|_{p_1}\big\||\nabla|^su\big\|_{p_2}.
\end{equation}

$(iii)$ $($H\"older continuous$)$ Let $G\in C^\al(\mathbb{C})$ with
$0<\al<1$. Then, for every
$0<s<\al,~1<p<+\infty,~\frac{s}{\al}<\sigma<1,$ we have
\begin{equation}\label{hfscls}
\big\||\nabla|^s
G(u)\big\|_p\lesssim\big\||u|^{\al-\frac{s}{\sigma}}\big\|_{p_1}\big\||\nabla|^\sigma
u\big\|_{\frac{s}{\sigma}p_2}^{\frac{s}{\sigma}},
\end{equation} provided
$\frac1p=\frac{1}{p_1}+\frac1{p_2},$ and
$\big(1-\frac{s}{\al\sigma}\big)p_1>1.$
\end{lemma}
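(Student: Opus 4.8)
All three estimates belong to the circle of fractional Leibniz / chain rule inequalities (Kato--Ponce, Christ--Weinstein, Gagliardo--Nirenberg for fractional derivatives), and the cleanest route is the Littlewood--Paley calculus combined with the square--function characterization of the homogeneous Sobolev norm: for $0<s<1$ and $1<p<\infty$,
\begin{equation*}
\big\||\nabla|^s h\big\|_{L^p_x(\R^d)}\sim\Big\|\Big(\int_{\R^d}\frac{|h(\cdot+y)-h(\cdot)|^2}{|y|^{d+2s}}\,dy\Big)^{1/2}\Big\|_{L^p_x(\R^d)}.
\end{equation*}
I would treat the three parts in order, invoking the Bernstein estimates of Lemma~\ref{bernstein}, the boundedness of the Littlewood--Paley square function and of the Hardy--Littlewood maximal function $M$ on $L^p_x$ for $1<p<\infty$, and the Fefferman--Stein vector--valued maximal inequality.

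For $(i)$: when $s=0$ this is H\"older's inequality, so assume $s>0$. Decompose $fg=\sum_{N,M}f_Ng_M$ and split the sum into the high--low regime $M\ll N$, the symmetric low--high regime $N\ll M$, and the high--high regime $M\sim N$. In the high--low regime $|\nabla|^s(f_N g_{\ll N})$ is frequency--localized to $|\xi|\sim N$, so Littlewood--Paley gives $\|\sum_N|\nabla|^s(f_Ng_{\ll N})\|_{L^r_x}\lesssim\|(\sum_N|(|\nabla|^sf)_N|^2|g_{\ll N}|^2)^{1/2}\|_{L^r_x}\lesssim\|Mg\cdot(\sum_N|(|\nabla|^sf)_N|^2)^{1/2}\|_{L^r_x}$, and H\"older with exponents $p_2,q_2$ together with the square--function and maximal bounds yields $\||\nabla|^sf\|_{p_2}\|g\|_{q_2}$; the low--high regime is symmetric and produces $\|f\|_{p_1}\||\nabla|^sg\|_{q_1}$. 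In the high--high regime $|\nabla|^s(f_Ng_M)$ has frequency $\lesssim N\sim M$, one distributes $|\nabla|^s=N^{s}\cdot N^{-s}|\nabla|^s$, pulls out the factor $N^{s}$ onto $f_N$ or $g_M$ (here $s>0$ makes the geometric series over the output frequency summable), and again closes with H\"older and the square function. This reproduces the right--hand side of \eqref{moser}.

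For $(ii)$: if $s=1$ one writes $\nabla G(u)=\partial_{1}G(u)\nabla(\RE u)+\partial_{2}G(u)\nabla(\IM u)$, so $|\nabla G(u)|\le|G'(u)|\,|\nabla u|$ pointwise, and $\||\nabla|G(u)\|_p\sim\|\nabla G(u)\|_p$ by $L^p$--boundedness of the Riesz transforms, followed by H\"older with $p_1,p_2$. If $0<s<1$ I would use the square--function characterization: by the fundamental theorem of calculus for the $C^1$ map $G$ on $\C\cong\R^2$, $|G(u(x+y))-G(u(x))|\le|u(x+y)-u(x)|\int_0^1|G'\big(u(x)+\theta(u(x+y)-u(x))\big)|\,d\theta$; substituting, applying Minkowski's inequality in $y$ and then H\"older in $x$ with exponents $p_1,p_2$ gives \eqref{fraclsfz} directly for the power--type nonlinearities $G\sim|u|^pu$ of interest, where $|G'|$ at an intermediate value is dominated by $|G'(u(x))|+|G'(u(x+y))|$ via convexity; for a general $C^1$ function one instead runs the telescoping paraproduct argument of Christ--Weinstein (write $G(u)=\sum_N[G(u_{\le N})-G(u_{\le N/2})]$ and expand each increment by the mean value theorem). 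For $(iii)$ the idea is the same but with $G\in C^{\al}$: from $|G(a)-G(b)|\lesssim|a-b|^{\al}$ together with the growth bound $|G(a)-G(b)|\lesssim|a|^{\al}+|b|^{\al}$, interpolation with weight $s/(\al\sigma)$ gives $|G(u(x+y))-G(u(x))|\lesssim\big(|u(x+y)|+|u(x)|\big)^{\al-s/\sigma}\,|u(x+y)-u(x)|^{s/\sigma}$ (using $\sigma>s/\al$); inserting this into the square--function characterization of $\||\nabla|^sG(u)\|_p$, splitting off the factor $(|u(x+y)|+|u(x)|)^{\al-s/\sigma}$ by H\"older in $x$ into $L^{p_1}$, and recognizing the remaining square function raised to the power $s/\sigma$ as (a maximal variant of) $\||\nabla|^{\sigma}u\|_{(s/\sigma)p_2}^{s/\sigma}$ yields \eqref{hfscls}; the condition $(1-\tfrac{s}{\al\sigma})p_1>1$ is exactly what keeps the H\"older exponent on the low--regularity factor admissible.

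The main obstacle is the general $C^1$ (resp. $C^{\al}$) version of $(ii)$--$(iii)$: for non--power nonlinearities one cannot control $G'$ (or the increments of $G$) at intermediate values pointwise by $G'$ at the endpoints, so one must replace the naive difference--quotient computation by the Littlewood--Paley telescoping/paraproduct argument, and in $(iii)$ carry out the interpolation of the H\"older and growth bounds with the precise exponents so that the final H\"older split and the Gagliardo--Nirenberg--type step (identifying the square function of the $\sigma$--increments with $\||\nabla|^{\sigma}u\|$) are both legitimate. The rest is routine bookkeeping with Bernstein, the square function, and the maximal function.
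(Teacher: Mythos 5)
The paper gives no proof of this lemma at all --- it simply refers to \cite{CW,Visanphd} --- and your Littlewood--Paley / square-function / paraproduct sketch (high-low/low-high/high-high splitting for the product rule, telescoping plus mean-value and maximal-function bounds for the chain rules, and the interpolation $|G(a)-G(b)|\lesssim(|a|+|b|)^{\al-\frac{s}{\sigma}}|a-b|^{\frac{s}{\sigma}}$ for the H\"older case) is precisely the standard argument carried out in those references, so it is essentially the same approach. The subtlety you flag in $(ii)$ about controlling $G'$ at intermediate values for a general $C^1$ nonlinearity is the right one, and it is harmless in this paper since $(ii)$--$(iii)$ are only ever applied to the power-type nonlinearity $f(u)=|u|^pu$, for which the convexity/monotonicity of $|G'|$ makes your pointwise domination valid.
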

\begin{proof}
We refer to \cite{CW,Visanphd} for the proof.
\end{proof}

As a direct consequence, we obtain the following nonlinear estimate.

\begin{corollary}\label{fscqdfz}
Let $f(u)=|u|^pu,$ and let $s\geq0$ if $p$ is an even integer or
$0\leq s<1+p$ otherwise. Then, we have
\begin{equation}\label{fxxqd}
\big\||\nabla|^sf(u)\big\|_{L_x^q}\lesssim\big\||\nabla|^su\big\|_{L_x^{q_1}}\|u\|_{L_x^{q_2}}^p,
\end{equation}
where $\frac1q=\frac1{q_1}+\frac{p}{q_2}.$
\end{corollary}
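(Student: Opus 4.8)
The plan is to derive Corollary~\ref{fscqdfz} directly from the three estimates collected in the preceding lemma, splitting into cases according to whether $p$ is an even integer. The target is to control $\big\||\nabla|^sf(u)\big\|_{L^q_x}$ by $\big\||\nabla|^su\big\|_{L^{q_1}_x}\|u\|^p_{L^{q_2}_x}$ with $\frac1q=\frac1{q_1}+\frac{p}{q_2}$, so the natural exponent bookkeeping is to place one factor of $u$ (carrying the derivative) in $L^{q_1}_x$ and the remaining $p$ factors in $L^{q_2}_x$, matching the H\"older relation.

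When $p$ is an even integer, $f(u)=|u|^pu$ is a polynomial in $u$ and $\bar u$, in fact a finite sum of monomials each containing $p+1$ factors from $\{u,\bar u\}$. First I would apply the fractional product rule \eqref{moser} inductively: write $f(u)$ as a product and peel off one factor at a time, each time using \eqref{moser} with $r=q$ and with the H\"older split that puts the differentiated factor in $L^{q_1}_x$ and each undifferentiated factor in $L^{q_2}_x$. Since $|\nabla|^s$ landing on $\bar u$ has the same $L^p$-norm as on $u$, and there is no regularity restriction needed here ($s\geq0$ suffices because \eqref{moser} holds for all $s\geq0$), this closes the even-integer case with only routine iteration of \eqref{moser}.

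When $p$ is not an even integer, $f$ is merely $C^1$ (indeed $C^{\lceil p\rceil}$ in a suitable sense) but not smooth, so I would instead invoke the chain-rule-type estimate \eqref{fraclsfz} for $0<s\leq1$, applied with $G=f$, $p_1$ chosen so that $\|f'(u)\|_{p_1}=\big\||u|^p\big\|_{p_1}=\|u\|^p_{p p_1}$ and $p_2$ so that $\frac1q=\frac1{p_1}+\frac1{p_2}$; setting $q_2=pp_1$ and $q_1=p_2$ recovers the claimed inequality for $s\in(0,1]$. For $s\in[1,1+p)$ one first writes $|\nabla|^sf(u)$ using the product rule to move one derivative onto $u$: schematically $|\nabla|^sf(u)\sim |\nabla|^{s-1}\big(f'(u)\nabla u\big)$, then applies \eqref{moser} to distribute the remaining $s-1$ derivatives, and estimates $\big\||\nabla|^{s-1}f'(u)\big\|$ by \eqref{fraclsfz} (if $s-1\leq1$) or \eqref{hfscls} (to absorb the H\"older-continuity of $f'$, which is only $C^{p}$ when $p<1$, or $C^{\alpha}$ with the fractional part $\alpha=p-\lfloor p\rfloor$ in general); the constraint $s<1+p$ is exactly what guarantees $f'\in C^{s-1}$ so that these estimates apply. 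The case $s=0$ is just H\"older's inequality. I expect the main obstacle to be the careful interpolation of exponents in the $s\in[1,1+p)$ sub-case: one must choose the intermediate Lebesgue indices $(p_1,p_2,\sigma)$ in \eqref{hfscls} so that every integrability and admissibility side-condition, in particular $\big(1-\frac{s}{\alpha\sigma}\big)p_1>1$ and $\frac1q=\frac1{q_1}+\frac{p}{q_2}$, is simultaneously met — this is the only place where anything beyond a mechanical application of the quoted lemmas is required, and it is the reason for the precise threshold $s<1+p$.
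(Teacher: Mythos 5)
Your proposal is correct and follows the intended route: the paper gives no separate proof, presenting the corollary as a direct consequence of the preceding lemma (the product rule \eqref{moser} together with the chain rules \eqref{fraclsfz} and \eqref{hfscls}, cited from \cite{CW,Visanphd}), which is exactly the machinery you invoke — polynomial expansion plus \eqref{moser} for even $p$, and the chain rule for $s\le1$ with the peel-off-one-derivative argument, H\"older-continuity estimate, and exponent interpolation for $s\in[1,1+p)$. The bookkeeping you flag in the last sub-case is indeed the only nontrivial point, and the threshold $s<1+p$ enters precisely as you describe.
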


We will also make use of the following refinement of the fractional
chain rule, which appears in \cite{KV2011}. This will be used in the
proof of the perturbation for $s_c\in[1,2)$.
\begin{lemma}[Derivatives of differences,
\cite{KV2011}]\label{deridiff}  For $0<s<1$ and $f(u)=|u|^pu$. Then
for $1<r,r_1,r_2<+\infty$ such that
$\frac1r=\frac1{r_1}+\frac{p}{r_2}$, we have
\begin{equation}
\big\||\nabla|^s\big[f(u+v)-f(u)\big]\big\|_r\lesssim\big\||\nabla|^su\big\|_{r_1}\|v\|_{r_2}^p+\big\||\nabla|^sv\big\|_{r_1}\|u+v\|_{r_2}^p.
\end{equation}
\end{lemma}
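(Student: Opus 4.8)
\textbf{Proof proposal for Lemma \ref{deridiff}.}

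The plan is to treat the difference $f(u+v)-f(u)$ as the integral of its derivative along the segment joining $u$ to $u+v$, so that the estimate reduces to the ordinary fractional chain/product rule. Write
\begin{equation}\label{eq:fund-thm}
f(u+v)-f(u)=\int_0^1 \frac{d}{d\theta}f(u+\theta v)\,d\theta
=\int_0^1\Big[f_z(u+\theta v)\,v+f_{\bar z}(u+\theta v)\,\bar v\Big]\,d\theta,
\end{equation}
where $f_z,f_{\bar z}$ denote the Wirtinger derivatives of $f(z)=|z|^pz$; both are bounded pointwise by $C|z|^p$, so the integrand is schematically of the form $O(|u+\theta v|^p)\cdot v$. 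Applying $|\nabla|^s$ and Minkowski's integral inequality, it suffices to bound, uniformly in $\theta\in[0,1]$,
\begin{equation}\label{eq:reduced}
\big\||\nabla|^s\big(G(u+\theta v)\,v\big)\big\|_r,\qquad G(z)=|z|^p \ \text{(or a bounded multiple thereof)}.
\end{equation}

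First I would apply the product rule \eqref{moser} with the exponent splitting $\frac1r=\frac1{r_2}+\big(\frac1r-\frac1{r_2}\big)$, distributing the derivative either onto the factor $v$ or onto $G(u+\theta v)$. The first resulting term is $\big\|G(u+\theta v)\big\|_{r_2/p}\big\||\nabla|^s v\big\|_{r_1}\lesssim \|u+\theta v\|_{r_2}^p\big\||\nabla|^sv\big\|_{r_1}\le \|u+v\|_{r_2}^p\big\||\nabla|^sv\big\|_{r_1}$ once we use $\|u+\theta v\|_{r_2}\le \|u+v\|_{r_2}+\| (1-\theta)v\|_{r_2}$... — actually cleaner is to bound $\|u+\theta v\|_{r_2}\lesssim \|u\|_{r_2}+\|v\|_{r_2}$ and absorb, or to observe $|u+\theta v|\le |u|+|v|$ pointwise so $\|G(u+\theta v)\|_{r_2/p}\lesssim \|u\|_{r_2}^p+\|v\|_{r_2}^p$; either way this term is dominated by the right-hand side of the lemma. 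For the second term we need $\big\||\nabla|^s G(u+\theta v)\big\|$ in the appropriate norm, and this is where the two cases on $p$ enter: if $p$ is large enough that $G\in C^1$ near the relevant range we use the chain rule \eqref{fraclsfz}; in general $G(z)=|z|^p$ is only $C^{\min(p,1)}$-Hölder, so for $0<p<1$ we invoke the Hölder chain rule \eqref{hfscls}, and for $p\ge1$ the $C^1$ version applies directly. In all cases $\big\||\nabla|^s G(u+\theta v)\big\|_{q}\lesssim \|u+\theta v\|^{p-s\cdot(\,\cdot\,)}\cdot\||\nabla|^{s}(u+\theta v)\|^{(\,\cdot\,)}$ type bounds, and expanding $|\nabla|^s(u+\theta v)=|\nabla|^s u+\theta|\nabla|^s v$ and $|u+\theta v|\le|u|+|v|$ produces exactly the two monomials $\||\nabla|^s u\|_{r_1}\|v\|_{r_2}^p$ and $\||\nabla|^s v\|_{r_1}\|u+v\|_{r_2}^p$ (possibly with harmless relabelling of which of $\|u\|_{r_2},\|v\|_{r_2},\|u+v\|_{r_2}$ appears, all of which are $\lesssim \|u+v\|_{r_2}$ up to the other term).

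The main obstacle is the low-power regime $0<p<1$, where $G(z)=|z|^p$ fails to be Lipschitz and the naive product rule cannot be applied to the difference $G(u+\theta v)$ without losing the structure; here one must use \eqref{hfscls} with a carefully chosen intermediate smoothness parameter $\sigma\in(s,1)$ and check that the auxiliary integrability condition $(1-\tfrac{s}{p\sigma})p_1>1$ can be met for some admissible exponent splitting compatible with $\frac1r=\frac1{r_1}+\frac{p}{r_2}$. A secondary but routine point is keeping all constants independent of $\theta$ so that the $\theta$-integral in \eqref{eq:fund-thm} converges trivially; since every bound above is of the form $\|u\|+\|v\|$ raised to powers, with no negative powers of $\theta$ appearing, this is immediate. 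Once \eqref{eq:reduced} is controlled uniformly, integrating over $\theta$ and collecting terms yields the stated inequality.

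\endproof
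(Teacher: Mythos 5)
The paper offers no proof of this lemma (it is quoted verbatim from \cite{KV2011}), so your proposal has to stand on its own, and as written it does not close. The fatal step is the one you pass over quickly: after the fundamental theorem of calculus and the product rule \eqref{moser}, the term in which $|\nabla|^s$ falls on the coefficient $f_z(u+\theta v)$ is estimated by the chain rule \eqref{fraclsfz} as $\big\||\nabla|^s(u+\theta v)\big\|_{r_1}\|u+\theta v\|_{r_2}^{p-1}\|v\|_{r_2}$ (for $p\geq 1$), and expanding produces, among others, the mixed monomial $\big\||\nabla|^s u\big\|_{r_1}\|u+v\|_{r_2}^{p-1}\|v\|_{r_2}$ -- not "exactly the two monomials" of the lemma. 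This mixed term is not dominated by $\big\||\nabla|^su\big\|_{r_1}\|v\|_{r_2}^p+\big\||\nabla|^sv\big\|_{r_1}\|u+v\|_{r_2}^p$ when $\|u+v\|_{r_2}\gg\|v\|_{r_2}$: the whole point of the lemma (this is why the paper calls it a \emph{refinement} of the fractional chain rule and imports it rather than deriving it from \eqref{moser}--\eqref{hfscls}) is that the factor multiplying $\big\||\nabla|^su\big\|_{r_1}$ is a full power $\|v\|_{r_2}^p$ with no copy of $u$, and no Leibniz-plus-chain-rule bookkeeping can produce that, since only a single power of $v$ ever comes from the undifferentiated factor. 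For $0<p<1$ your route breaks even earlier: \eqref{hfscls} requires $s<\alpha=p$, so in the regime $p\leq s<1$ -- which is exactly how the paper uses the lemma, with $s=s_c-1$ and $p\leq s_c-1$ in the stability argument -- you cannot put $s$ derivatives on the $C^p$ function $f_z(u+\theta v)$ at all, and even when $s<p$ the output involves $\big\||\nabla|^\sigma(u+v)\big\|^{s/\sigma}$ at a regularity $\sigma>s$, again not of the stated form.

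That the mixed term is a genuine obstruction, and not a removable artifact, can be seen from a high--low frequency example: for the cubic case $f(z)=|z|^2z$ take $u=\phi e^{iNx_1}$ and $v=\epsilon w$ with $\phi,w$ fixed bumps; the contribution $u^2\bar v$ oscillates at frequency $2N$, so the left-hand side is of size $\epsilon N^s$, while the stated right-hand side is of size $\epsilon^2N^s+\epsilon$, and taking $\epsilon=N^{-s}$ with $N$ large gives a contradiction. So for $p>1$ the displayed inequality cannot be recovered by any rearrangement of your expansion; the estimate is to be understood in the H\"older regime $p\leq1$, which is where \cite{KV2011} establish it and where this paper invokes it. The proof there does not distribute the derivative by Leibniz at all: it works with a difference/square-function characterization of the $\big\||\nabla|^s\cdot\big\|_{L^r}$ norm (equivalently, a Littlewood--Paley analysis) and exploits the pointwise H\"older bound $|f_z(a)-f_z(b)|\lesssim|a-b|^{p}$, so that the full factor $|v|^p$ is extracted \emph{before} any regularity is spent -- the same mechanism behind Lemma \ref{yashuo}. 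What your argument does prove, for $p\geq1$, is the weaker bound with right-hand side $\big(\big\||\nabla|^su\big\|_{r_1}+\big\||\nabla|^sv\big\|_{r_1}\big)\big(\|u\|_{r_2}+\|v\|_{r_2}\big)^{p-1}\|v\|_{r_2}$, which is not Lemma \ref{deridiff}.
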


Next, we give a nonlinear estimate in \cite{KV2010}. It is used in
the proof of Lemma \ref{nlefs}.
\begin{lemma}[\cite{KV2010}]\label{yashuo}
Let $G\in C^\al(\mathbb{C})$ with $0<\al\leq1$., and
$0<s<\sigma\al<\al$. For $1<q,q_1,q_2,r_1,r_2,r_3<+\infty,$ such
that
$\frac1q=\frac1{q_1}+\frac1{q_2}=\frac1{r_1}+\frac1{r_2}+\frac1{r_3},$
we have
\begin{equation}\label{yashuo1}
\begin{split}
&\bigg\||\nabla|^s\Big[\omega\cdot\big(G(u+v)-G(u)\big)\Big]\bigg\|_q\\
\lesssim&\big\||\nabla|^s\omega\big\|_{q_1}\|v\|_{\al
q_2}^p+\|\omega\|_{r_1}\|v\|_{(\al-\frac{s}{\sigma})r_2}^{\al-\frac{s}{\sigma}}\Big(\big\||\nabla|^{\sigma}v\big\|_{\frac{s}{\sigma}r_3}+\big\||\nabla|
^\sigma u\big\|_{\frac{s}{\sigma}r_3}\Big)^\frac{s}{\sigma},
\end{split}
\end{equation}
where $(1-\al)r_1,~\big(\al-\frac{s}{\sigma}\big)r_2>1$.
\end{lemma}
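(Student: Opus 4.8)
The plan is to reduce \eqref{yashuo1} to two ingredients: the fractional Leibniz rule \eqref{moser}, and a difference analogue of the H\"older fractional chain rule \eqref{hfscls}. Write $F:=G(u+v)-G(u)$ and set $\tfrac1{\tilde r}:=\tfrac1{r_2}+\tfrac1{r_3}$, so that $\tilde r\in(1,\infty)$. Applying \eqref{moser} to $|\nabla|^s(\omega\cdot F)$ --- with the splitting $\tfrac1q=\tfrac1{q_1}+\tfrac1{q_2}$ on the first piece and $\tfrac1q=\tfrac1{r_1}+\tfrac1{\tilde r}$ on the second --- gives
\[
\big\||\nabla|^s(\omega F)\big\|_q\lesssim\big\||\nabla|^s\omega\big\|_{q_1}\|F\|_{q_2}+\|\omega\|_{r_1}\big\||\nabla|^sF\big\|_{\tilde r}.
\]
For the first summand it suffices to use that $G\in C^\al$, which gives the pointwise bound $|F(x)|\lesssim|v(x)|^\al$, hence $\|F\|_{q_2}\lesssim\big\||v|^\al\big\|_{q_2}=\|v\|_{\al q_2}^{\al}$; this is the first term on the right of \eqref{yashuo1}.

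The substance of the lemma is the difference-chain-rule estimate
\[
\big\||\nabla|^sF\big\|_{\tilde r}\lesssim\|v\|_{(\al-\frac s\sigma)r_2}^{\al-\frac s\sigma}\Big(\big\||\nabla|^\sigma v\big\|_{\frac s\sigma r_3}+\big\||\nabla|^\sigma u\big\|_{\frac s\sigma r_3}\Big)^{\frac s\sigma},
\]
which I would prove by a Littlewood--Paley argument based on the square-function equivalence $\||\nabla|^sF\|_{\tilde r}\sim\big\|\big(\sum_N N^{2s}|P_NF|^2\big)^{1/2}\big\|_{\tilde r}$. The convolution kernel $K_N$ of $P_N$ has integral zero and obeys $\int|K_N(y)|\,|y|^\beta\,dy\lesssim N^{-\beta}$ for $\beta\ge0$, so $P_NF(x)=\int K_N(y)\big[F(x-y)-F(x)\big]\,dy$, and I would bound the difference $F(x-y)-F(x)$ in two complementary ways. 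First, applying $G\in C^\al$ at the points $x$ and $x-y$ separately yields the \emph{cancellation bound} $|F(x-y)-F(x)|\lesssim|v(x-y)|^\al+|v(x)|^\al$, whence $|P_NF(x)|\lesssim\big[\mathcal M(|v|^\al)\big](x)$ uniformly in $N$; this dominates at low frequencies. Second, writing $w:=u+v$ and combining $|G(w)(x)-G(w)(z)|\lesssim|w(x)-w(z)|^\al$ with the pointwise H\"older estimate $|g(x)-g(z)|\lesssim|x-z|^\sigma\big([\mathcal M|\nabla|^\sigma g](x)+[\mathcal M|\nabla|^\sigma g](z)\big)$, valid for $0<\sigma<1$, yields the \emph{smoothness bound} $|P_NF(x)|\lesssim N^{-\sigma\al}\big([\mathcal M|\nabla|^\sigma w](x)+[\mathcal M|\nabla|^\sigma u](x)\big)^{\al}$, which --- since $s<\sigma\al$ --- is summable and dominant at high frequencies.

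Balancing these two bounds at the crossover frequency where they agree and performing the dyadic sum, one reaches the pointwise estimate
\[
\big||\nabla|^sF(x)\big|\lesssim\big[\mathcal M(|v|^\al)(x)\big]^{1-\frac{s}{\sigma\al}}\Big([\mathcal M|\nabla|^\sigma w](x)+[\mathcal M|\nabla|^\sigma u](x)\Big)^{\frac s\sigma},
\]
using the identity $\big(1-\tfrac{s}{\sigma\al}\big)\al=\al-\tfrac s\sigma$. Taking the $L^{\tilde r}$ norm, splitting by H\"older into $L^{r_2}\cdot L^{r_3}$ (legitimate since $\tfrac1{r_2}+\tfrac1{r_3}=\tfrac1{\tilde r}$), applying the Hardy--Littlewood maximal inequality in the relevant Lebesgue spaces, and invoking $|\nabla|^\sigma w=|\nabla|^\sigma u+|\nabla|^\sigma v$ together with the maximal inequality once more, produces the stated bound; the conditions $(1-\al)r_1>1$, $(\al-\frac s\sigma)r_2>1$ and $s<\sigma\al<\al$ in the hypothesis are precisely what these maximal inequalities (and the high-frequency summation) require. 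I expect this last step --- the difference chain rule --- to be the main obstacle: one must re-run the Christ--Weinstein-type proof behind \eqref{hfscls} while retaining the cancellation afforded by the \emph{difference} $G(u+v)-G(u)$, so that the small factor $\|v\|^{\al-s/\sigma}$ (rather than the non-small $\|u+v\|^{\al-s/\sigma}$) appears; in particular neither \eqref{hfscls} nor the $|u|^pu$-specific Lemma \ref{deridiff} can be invoked directly. This is carried out as in \cite{KV2010}.
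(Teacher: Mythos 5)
You should note at the outset that the paper contains no proof of this lemma to compare against: it is imported verbatim from Killip--Visan \cite{KV2010}, and the paper's ``proof'' is the citation. Your sketch is, in substance, the standard Christ--Weinstein/Visan square-function-plus-maximal-function argument, which is precisely the technology behind the cited result, so you are reconstructing the external proof rather than finding a genuinely new one. Two differences are worth recording. (i) You silently replace the exponent $p$ in the first right-hand term of \eqref{yashuo1} by $\al$; that is the correct reading --- the lemma concerns a general $G\in C^\al$ and the $p$ there is a typo, harmless in the paper's application where $G=f_z$ and $\al=p$. (ii) You peel $\omega$ off first with the Leibniz rule \eqref{moser} and then prove a stand-alone difference chain rule in $L^{\tilde r}$ with $\frac1{\tilde r}=\frac1{r_2}+\frac1{r_3}$ (legitimate: $\tilde r>1$ indeed follows from $q>1$). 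This makes the hypothesis $(1-\al)r_1>1$ superfluous in your route; in the Killip--Visan argument the factor $\omega$ is kept inside the Littlewood--Paley/maximal-function analysis, which is where that condition gets consumed. So a completed version of your argument would give a marginally cleaner statement, but it is not literally the proof of \cite{KV2010}.

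On soundness: the skeleton is right --- the $N$-uniform cancellation bound versus the $N^{-\sigma\al}$ smoothness bound, the crossover summation using $s<\sigma\al$, the identity $\big(1-\frac{s}{\sigma\al}\big)\al=\al-\frac{s}{\sigma}$, and the observation that $(\al-\frac{s}{\sigma})r_2>1$ together with $\al\le1$ licenses the Hardy--Littlewood inequality on $\big[\mathcal M(|v|^\al)\big]^{1-\frac{s}{\sigma\al}}$. Two details are glossed over: convolving $|K_N(y)|\,|y|^{\sigma\al}$ against terms evaluated at $x-y$ produces an iterated maximal function (harmless but should be said), and the final maximal-function application on the $|\nabla|^\sigma$-factor needs $\frac{s}{\sigma}r_3>1$ (or a vector-valued maximal argument), a condition not listed in the statement --- the same implicit assumption already sits inside \eqref{hfscls} as quoted. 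Your closing point, that neither \eqref{hfscls} nor Lemma \ref{deridiff} can be used as a black box because the smallness must ride on $v$, is exactly the crux of the lemma; deferring its full execution to \cite{KV2010} is consistent with what the paper itself does.
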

We remark that one can extend Lemma \ref{yashuo} to $G(u)\simeq
O\big(|u|^\al\big)$ with $\al>1$, which will be used in the proof of
\eqref{nlesf11} for $p>1$.

We will also need the following lemma which is similar to Lemma
\ref{fraclsfz}. It is useful to the proof of Proposition
\ref{adecay1} for $p<1$.
\begin{lemma}[Nonlinear Bernstein inequality \cite{KVnote}]\label{nlbern}
Assume that $G\in C^\al(\mathbb{C})$ with $0<\al\leq1$.  Then,  we
have
\begin{equation}\label{nlbernstein}
\big\|P_NG(u)\big\|_{L_x^\frac{q}{\al}(\R^d)}\lesssim
N^{-\al}\big\|\nabla u\big\|_{L_x^q(\R^d)}^\al
\end{equation}
for all $1\leq q<+\infty.$
\end{lemma}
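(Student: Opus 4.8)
The plan is to exploit the cancellation built into the annular Littlewood--Paley projector $P_N$ together with the H\"older regularity of $G$. Write the convolution kernel of $P_N$ as $K_N(y)=N^d\check\psi(Ny)$, where $\psi(\xi)=\varphi(\xi)-\varphi(2\xi)$ is supported in the annulus $1/2\le|\xi|\le2$ and $\check\psi=\mathcal F^{-1}\psi$ is a fixed Schwartz function (we drop harmless dimensional constants coming from the Fourier normalization). The key point is that $\int_{\R^d}K_N(y)\,dy$ equals, up to a constant, $\psi(0)=\varphi(0)-\varphi(0)=0$. Using this mean-zero property, for each $x$ one may write
\[
P_NG(u)(x)=\int_{\R^d}K_N(y)\,G\big(u(x-y)\big)\,dy=\int_{\R^d}K_N(y)\,\Big[G\big(u(x-y)\big)-G\big(u(x)\big)\Big]\,dy,
\]
which is where the Hölder modulus of $G$ gets to act on a genuine \emph{difference} of translates of $u$.

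Next I would take the $L_x^{q/\al}$ norm of both sides and pass it inside the $y$-integral by Minkowski's integral inequality, which is legitimate since $q/\al\ge1$:
\[
\big\|P_NG(u)\big\|_{L_x^{q/\al}}\le\int_{\R^d}|K_N(y)|\;\big\|G(u(\cdot-y))-G(u(\cdot))\big\|_{L_x^{q/\al}}\,dy.
\]
Because $G\in C^\al(\mathbb C)$ we have the pointwise bound $\big|G(u(x-y))-G(u(x))\big|\lesssim|u(x-y)-u(x)|^\al$, hence
\[
\big\|G(u(\cdot-y))-G(u(\cdot))\big\|_{L_x^{q/\al}}\lesssim\big\|\,|u(\cdot-y)-u(\cdot)|^\al\,\big\|_{L_x^{q/\al}}=\big\|u(\cdot-y)-u(\cdot)\big\|_{L_x^q}^\al.
\]
For Schwartz $u$ (the general case following by density once $\nabla u\in L^q$), the fundamental theorem of calculus gives $u(x-y)-u(x)=-\int_0^1 y\cdot\nabla u(x-ty)\,dt$, so one more application of Minkowski's inequality yields the standard difference estimate $\|u(\cdot-y)-u(\cdot)\|_{L_x^q}\le|y|\,\|\nabla u\|_{L_x^q}$.

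Combining these bounds leads to
\[
\big\|P_NG(u)\big\|_{L_x^{q/\al}}\lesssim\|\nabla u\|_{L_x^q}^\al\int_{\R^d}|K_N(y)|\,|y|^\al\,dy=\|\nabla u\|_{L_x^q}^\al\cdot N^{-\al}\int_{\R^d}|\check\psi(z)|\,|z|^\al\,dz,
\]
after the substitution $z=Ny$, and the remaining integral is a finite constant since $\check\psi$ is Schwartz. This is exactly the claimed estimate $\|P_NG(u)\|_{L_x^{q/\al}}\lesssim N^{-\al}\|\nabla u\|_{L_x^q}^\al$. The argument is essentially routine: the only places needing a little care are confirming that the kernel of the \emph{annular} projector $P_N$ integrates to zero (this is what lets us insert $-G(u(x))$ at no cost), and checking that every invocation of Minkowski's integral inequality stays in the admissible range $q/\al\ge1$, which holds since $0<\al\le1\le q$; the endpoint $\al=1$, with $C^1$ read as ``Lipschitz,'' is covered verbatim. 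I do not anticipate a genuine obstacle here.
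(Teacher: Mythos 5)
Your argument is correct, and it is precisely the standard proof of this lemma: the paper itself gives no proof but cites the Killip--Visan lecture notes, where the estimate is established exactly as you do it, by exploiting that the kernel of the annular projector $P_N$ has integral zero, inserting $-G(u(x))$, invoking the H\"older bound $|G(a)-G(b)|\lesssim|a-b|^\al$, and finishing with Minkowski's inequality and the translation estimate $\|u(\cdot-y)-u\|_{L^q}\le |y|\,\|\nabla u\|_{L^q}$. All exponent checks ($q/\al\ge 1$, finiteness of $\int|\check\psi(z)||z|^\al\,dz$) are in order, so nothing further is needed.
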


\subsection{Local well-posedness in inhomogeneous space}

Now we can state the following standard local well-posedness result,
where we assume that the initial data in the inhomogeneous critical
Sobolev space. This assumption simplifies the proof since one can
use the $L_t^q L_x^r$-norm with $(q,r)\in\Lambda_1$ as the metric
(that is in mass-critical spaces) when we prove the map is a
contraction. And this assumption can be removed by using the
perturbation results proved in Corollary \ref{long1} below, see
Proposition \ref{lwph}.

\begin{theorem} [Local well-posedness]\label{feiqi}
Assume $u_0\in
 H_x^{s_c}(\R^d)$, and let $s_c\geq1$ if $p$ is an even integer or
$s_c\in[1,2+p)$ otherwise. Then there exists $\eta_0=\eta_0(d)>0$
such that if $I$ is a compact interval containing zero such that
\begin{equation}\label{small}
\|e^{it\Delta^2}u_0\|_{Z(I)}:=\big\||\nabla|^{s_c-1}e^{it\Delta^2}u_0\big\|_{L_t^{2(p+1)}L_x^\frac{2d(p+1)}{(d-2)(p+1)-4}(I\times\R^d)}\leq\eta,
\end{equation}
where $0<\eta\leq\eta_0$, then there exists a unique solution $u$ to
\eqref{equ1.1} on $I\times\R^d$. Furthermore, the solution $u$ obeys
\begin{align}\label{smalll}
\|u\|_{Z(I)}\leq&2\eta\\\label{small1}
\big\||\nabla|^{s_c}u\big\|_{S^0(I)}\leq&2C\big\||\nabla|^{s_c}u_0\big\|_{L_x^2}+C\eta^{1+p}\\\label{small2}
\|u\|_{S^0(I)}\leq&2C\|u_0\|_{L_x^2},
\end{align}
where $C$ is the Strichartz constant as in  Proposition \ref{prop1}.
\end{theorem}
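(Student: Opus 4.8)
The plan is to run a standard contraction-mapping argument for the Duhamel map
\[
\Phi(u)(t) = e^{it\Delta^2}u_0 - i\int_0^t e^{i(t-s)\Delta^2}f(u(s))\,ds
\]
on a suitable complete metric space. Since the initial data lie in the \emph{inhomogeneous} space $H_x^{s_c}$, the idea is to use two ``levels'' of norms: the low-regularity norm $\|u\|_{S^0(I)}$ controlled by $\|u_0\|_{L^2_x}$ (via the mass-critical / biharmonic-admissible Strichartz estimate \eqref{equ25}), and the high-regularity norm $\||\nabla|^{s_c}u\|_{S^0(I)}$ controlled by $\||\nabla|^{s_c}u_0\|_{L^2_x}$ (via \eqref{equ26}); meanwhile the smallness is only imposed on the scaling-critical quantity $\|u\|_{Z(I)}=\||\nabla|^{s_c-1}e^{it\Delta^2}u_0\|_{L_t^{2(p+1)}L_x^{2d(p+1)/((d-2)(p+1)-4)}}$. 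First I would fix the ball
\[
B=\Big\{u:\ \|u\|_{Z(I)}\le 2\eta,\ \||\nabla|^{s_c}u\|_{S^0(I)}\le 2C\||\nabla|^{s_c}u_0\|_{L^2_x}+C\eta^{1+p},\ \|u\|_{S^0(I)}\le 2C\|u_0\|_{L^2_x}\Big\},
\]
equipped with the metric $d(u,v)=\|u-v\|_{Z(I)}$, and show that $\Phi$ maps $B$ into itself and is a contraction there.

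The key computational step is the nonlinear estimate controlling $\||\nabla|^{s_c-1}f(u)\|$ in the appropriate dual Strichartz space by a power of $\|u\|_{Z(I)}$ times the higher-regularity norm. Writing the Duhamel term through \eqref{equ26} with $h_2=f(u)$, one needs
\[
\big\||\nabla|^{s_c-1}f(u)\big\|_{L_t^2 L_x^{2d/(d+2)}(I\times\R^d)}\lesssim \|u\|_{Z(I)}^{p}\,\||\nabla|^{s_c}u\|_{S^0(I)},
\]
and similarly for the difference $f(u)-f(v)$; here the point is that the Strichartz gain of one derivative (Proposition \ref{prop1}, the smoothing feature) reduces the required regularity on $f(u)$ from $s_c$ to $s_c-1$, which is exactly what makes the range $s_c<2+p$ admissible. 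This is where Corollary \ref{fscqdfz} (for $s_c-1<1+p$, i.e.\ $s_c<2+p$) and Lemma \ref{deridiff} / Lemma \ref{yashuo} enter: one interpolates among $Z(I)$, $S^0(I)$ and $\dot H^{s_c}$-Strichartz norms, using Sobolev embedding to place the factors $\|u\|_{L_x^{q_2}}^p$ and $\||\nabla|^{s_c-1}u\|_{L_x^{q_1}}$ into the biharmonic-admissible scale, so that all exponents match \eqref{equ22} and the Hölder relation $\frac1q=\frac1{q_1}+\frac p{q_2}$ holds. For the low-regularity bound \eqref{small2} one runs the same scheme with zero derivatives, absorbing $f(u)$ into $N^0(I)$ and using $\|f(u)\|_{N^0(I)}\lesssim \|u\|_{Z(I)}^p\|u\|_{S^0(I)}\lesssim \eta^p\|u\|_{S^0(I)}$.

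Once these nonlinear estimates are in place, the rest is bookkeeping: choosing $\eta_0$ small enough that $C\eta_0^p\le\frac14$, say, gives $\|\Phi(u)\|_{Z(I)}\le\eta+C(2\eta)^{p+1}\cdot(\ldots)\le 2\eta$ after also using \eqref{small} for the linear part, the self-mapping bounds \eqref{small1}--\eqref{small2} follow by feeding the nonlinear estimates into \eqref{equ26}, and the contraction estimate $\|\Phi(u)-\Phi(v)\|_{Z(I)}\le \frac12\|u-v\|_{Z(I)}$ follows from the difference version of the nonlinear estimate together with the uniform bound on $\||\nabla|^{s_c}u\|_{S^0(I)}$, $\|u\|_{S^0(I)}$ on $B$; the Banach fixed point theorem then yields the unique solution and the stated bounds. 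The main obstacle I anticipate is the bookkeeping in the nonlinear estimate for non-integer $p$, where $f$ is only $C^{\lceil s_c-1\rceil,\{s_c-1\}}$ and one must invoke the fractional-chain-rule refinements (Lemma \ref{deridiff}, Lemma \ref{yashuo}) rather than the clean product rule; verifying that the auxiliary exponents $r_1,r_2,r_3$ there can be chosen biharmonic-admissible (and that side conditions like $(1-\alpha)r_1>1$ hold) in all dimensions $d\ge9$ and for the full range $s_c\in[1,2+p)$ is the delicate part. A secondary, more routine point is checking that $Z(I)$ itself is (up to Sobolev embedding and the derivative count $s_c-1$) controlled by a biharmonic-admissible Strichartz norm, so that it sits consistently inside the $S^0$-scale used above.
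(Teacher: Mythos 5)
Your overall scheme (contraction for the Duhamel map with smallness imposed only on $\|e^{it\Delta^2}u_0\|_{Z(I)}$, and the derivative-gain Strichartz estimate \eqref{equ26} used to close at $s_c-1$ derivatives of the nonlinearity) is the same as the paper's. The genuine gap is your choice of metric: you contract in $d(u,v)=\|u-v\|_{Z(I)}$, which forces you to prove a Lipschitz-type bound of the form
\begin{equation*}
\big\||\nabla|^{s_c-1}\big(f(u)-f(v)\big)\big\|_{L_t^2L_x^{\frac{2d}{d+2}}(I\times\R^d)}\lesssim \|u-v\|_{Z(I)}\,\big(\|u\|_{Z(I)}+\|v\|_{Z(I)}\big)^p,
\end{equation*}
and when $p$ is not an even integer this is exactly the estimate that is \emph{not} available in the full range $s_c\in[1,2+p)$: Lemma \ref{deridiff} only covers $0<s<1$ (so at best $s_c<2$), and Lemma \ref{yashuo} requires H\"older regularity $\al\leq1$ and in any case produces terms with $\||\nabla|^\sigma u\|$, $\||\nabla|^\sigma v\|$ of both functions rather than a clean contraction factor. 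You flag this as ``the delicate part'' but defer it; it is not mere bookkeeping — the paper's own Appendix (Lemmas \ref{longpert1}--\ref{shorttimep}) has to introduce exotic Strichartz spaces and accepts only H\"older-type stability exponents $\varepsilon^{c(d,p)}$ precisely because the differentiated nonlinearity fails to be Lipschitz at this regularity.

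The paper avoids the problem entirely, and this is where the hypothesis $u_0\in H^{s_c}_x$ (rather than $\dot H^{s_c}_x$) earns its keep: since $u_0\in L^2_x$, one can add the mass-level bound $\|u\|_{L_t^{2(p+1)}L_x^{\frac{2d(p+1)}{d(p+1)-4}}}\leq 2C\|u_0\|_{L^2_x}$ to the ball and take as metric the \emph{undifferentiated} norm $d(u,v)=\|u-v\|_{L_t^{2(p+1)}L_x^{\frac{2d(p+1)}{d(p+1)-4}}}$. The contraction then only needs the pointwise bound $|f(u)-f(v)|\lesssim|u-v|(|u|^p+|v|^p)$ together with H\"older, giving $\|f(u)-f(v)\|_{L_t^2L_x^{\frac{2d}{d+4}}}\lesssim \|u-v\|\,\|(u,v)\|_{Z(I)}^p$, with no derivatives on the difference and no restriction beyond $s_c<2+p$ (which enters only in the self-mapping estimates via Corollary \ref{fscqdfz} with $s=s_c-1$). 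This is exactly the remark made before Theorem \ref{feiqi} about using the mass-critical space as the metric; the inhomogeneous assumption is later removed by density plus the perturbation theory (Proposition \ref{lwph}), not by strengthening the contraction. Your self-mapping estimates and the choice $C\eta_0^p\ll1$ are otherwise in line with the paper; if you switch the metric to the no-derivative norm (and add the corresponding $L^2$-level bound to your ball, replacing the uniqueness class accordingly), your argument closes.
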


\begin{proof}
We apply the Banach fixed point argument to prove this lemma. First
we define the map
\begin{equation}\label{inte3}
\Phi(u(t))=e^{it\Delta^2}u_0-i\int_{0}^te^{i(t-s)\Delta^2}f(u(s))ds
\end{equation}
on the complete metric space $B$
\begin{align*}
B:=\big\{u\in C_t(I; H_x^{s_c}):&\
\|u\|_{L_t^\infty H_x^{s_c}(I\times\R^d)}\leq2C\|u_0\|_{H_x^{s_c}}+C\eta^{1+p};\\
&\|u\|_{L_t^{2(p+1)}L_x^\frac{2d(p+1)}{d(p+1)-4}(I\times\R^d)}\leq2C\|u_0\|_{L_x^2};~\|u\|_{Z(I)}\leq2\eta\big\}
\end{align*}
with the metric
$d(u,v)=\big\|u-v\big\|_{L_t^{2(p+1)}L_x^\frac{2d(p+1)}{d(p+1)-4}(I\times\R^d)}$.

It suffices to prove that the operator defined by the RHS of
$(\ref{inte3})$ is a contraction map on $B$ for $I$. If $u\in B,$
then by Strichartz estimate,  Corollary \ref{fscqdfz} and
\eqref{small}, we have
\begin{align*}\|\Phi(u)\|_{Z(I)}
=&\big\||\nabla|^{s_c-1}\Phi(u)\big\|_{L_t^{2(p+1)}L_x^\frac{2d(p+1)}{(d-2)(p+1)-4}(I\times\R^d)}\\
\leq& \|e^{it\Delta^2}u_0\|_{Z(I)}+C\big\||\nabla|^{s_c-1}f(u)
\big\|_{L_t^2L_x^\frac{2d}{d+2}(I\times\R^d)
}\\
\leq&\eta+C\|u\|_{Z(I)}^{p+1}.
\end{align*}
Plugging the assumption $\|u\|_{Z(I)}\leq2\eta$, we see that for
$u\in B$,
\begin{align*}
\|\Phi(u)\|_{Z(I)}\leq\eta+8C\eta^{p+1}\leq2\eta
\end{align*}
provided we take $\eta$ sufficiently small such that
$8C\eta^p\leq1.$ Similarly, if $u\in B,$ then
\begin{align*}
&\|\Phi(u)\|_{L_t^\infty H_x^{s_c}(I\times\R^d)}\\
 \leq&C\|u_0\|_{
H^{s_c}_x(\R^d)}+C\big\||\nabla|^{s_c-1}f(u)\big\|_{L_t^2L_x^\frac{2d}{d+2}(I\times\R^d)
}+C\|f(u)\|_{L_t^2L_x^\frac{2d}{d+4}(I\times\R^d)}\\
\leq&C\|u_0\|_{
H^{s_c}_x(\R^d)}+C\|u\|_{Z(I)}\|u\|_{L_t^{2(p+1)}L_x^\frac{dp(p+1)}{2(p+2)}(I\times\R^d)}^p+C
\|u\|_{L_t^{2(p+1)}L_x^\frac{2d(p+1)}{d(p+1)-4}(I\times\R^d)}\|u\|_{Z(I)}^p\\
\leq&C\|u_0\|_{
H^{s_c}_x(\R^d)}+C(2\eta)^{p+1}+2C\|u_0\|_{L_x^2}(2\eta)^p\\
\leq&2C\|u_0\|_{ H^{s_c}_x(\R^d)}+C\eta^{1+p},
\end{align*}
and
\begin{align*}
\big\|\Phi(u)\big\|_{L_t^{2(p+1)}L_x^\frac{2d(p+1)}{d(p+1)-4}(I\times\R^d)}
 \leq&C\|u_0\|_{L^2_x(\R^d)}+C\big\|f(u)\big\|_{L_t^2L_x^\frac{2d}{d+4}(I\times\R^d)
}\\
\leq&C\|u_0\|_{
L^2_x(\R^d)}+C\|u\|_{L_t^{2(p+1)}L_x^\frac{2d(p+1)}{d(p+1)-4}(I\times\R^d)}\|u\|_{Z(I)}^p\\
\leq&C\|u_0\|_{
L^2_x(\R^d)}+C^2\|u_0\|_{L^2_x(\R^d)}(2\eta)^p\\
\leq&2C\|u_0\|_{L^2_x(\R^d)}.
\end{align*}
Hence $\Phi(u)\in B$ for $u\in B.$ That is, the functional $\Phi$
 maps the set $B$ back to itself.

On the other hand, by the same argument as before, we have for $u,
v\in B$,
\begin{align*}
d\big(\Phi(u),\Phi(v)\big)
\leq&C\big\|f(u)-f(v)\big\|_{L_t^2L_x^\frac{2d}{d+4}(I\times\R^d)}\\
\leq&C\|u-v\|_{L_t^{2(p+1)}L_x^\frac{2d(p+1)}{d(p+1)-4}(I\times\R^d)}\|(u,v)\|_{Z(I)}^p\\
\leq&C(4\eta)^pd(u,v)
\end{align*}
which allows us to derive
\begin{equation*}
d\big(\Phi(u),\Phi(v)\big)\leq\frac{1}{2}d(u,v),
\end{equation*}
by taking $\eta$ small such that
$$C(4\eta)^p\leq\frac12.$$

A standard fixed point argument gives a unique solution $u$ of
\eqref{equ1.1} on $I\times\R^d$ which satisfies the bound
\eqref{smalll}. The bounds \eqref{small1} and \eqref{small2} follow
from another application of the Strichartz estimate.
\end{proof}

\subsection{Perturbation}
Closely related to the continuous dependence on the data, an
essential tool for concentration compactness
 arguments is the perturbation theory. And we will show this perturbation theory in Appendix.

\begin{lemma}[Perturbation Lemma]\label{pertu123}  Let $s_c\geq1.$  Assume in addition that $s_c<2+p$ if $p$ is not an even integer. Let $I$ be
a compact time interval and $u,~\tilde{u}$ satisfy
\begin{align*}
(i\partial_t+\Delta^2)u=&-f(u)+eq(u)\\
(i\partial_t+\Delta^2)\tilde{u}=&-f(\tilde{u})+eq(\tilde{u})
\end{align*}
for some function $eq(u),eq(\tilde{u})$, and $f(u)=|u|^pu$. Assume
that for some constants $M,E>0$, we have
\begin{align}\label{eq2.2012}
\|u\|_{L_t^\infty(I;
\dot{H}^{s_c}_x(\R^d))}+\|\tilde{u}\|_{L_t^\infty(I;
\dot{H}^{s_c}_x(\R^d))}\leq E,\\\label{equ2.20122}
S_I(\tilde{u})\leq M,
\end{align}
 Let $t_0\in I$, and let $u(t_0)$ be
close to $\tilde{u}(t_0)$ in the sense that
\begin{equation} \label{eq2.2221}
\|u_0-\tilde{u}_0\|_{\dot{H}^{s_c}_x(\R^d)}\leq\varepsilon,
\end{equation}
where $0<\varepsilon<\varepsilon_1( M, E)$ is a small constant.
Assume also that we have smallness conditions
\begin{equation}\label{equ2.2121}
\big\||\nabla|^{s_c-1}\big(eq(u),eq(\tilde{u})\big)\big\|_{L_t^2L_x^\frac{2d}{d+2}(I\times\R^d)}\leq\varepsilon,
\end{equation}
where $\varepsilon$ is as above.

 Then
we conclude that
\begin{equation}\label{eq2.2321}
\begin{split}
S_I(u-\tilde{u})\leq & C(M,E)\varepsilon^{c_1}\\
\big\||\nabla|^{s_c}(u-\tilde{u})\big\|_{S^0(I)}\leq &
C(M,E)\varepsilon^{c_2}\\
\big\||\nabla|^{s_c}u\big\|_{S^0(I)}\leq & C(M,E),
\end{split}
\end{equation}
where $c_1,~c_2$ are positive constants that depend on $d,~p,~E$ and
$M$.
\end{lemma}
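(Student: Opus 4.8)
The plan is to prove the Perturbation Lemma via a standard bootstrap/continuity argument, treating $w := u - \tilde u$ as the solution of a perturbed equation driven by $\tilde u$, and controlling $w$ in the Strichartz norm $\|\cdot\|_{Z(I)}$ together with $\||\nabla|^{s_c}\cdot\|_{S^0(I)}$. First I would reduce to the case where $\|\tilde u\|_{Z(I)}$ is small on $I$, as follows: since $S_I(\tilde u) \le M$, by the local theory (Theorem \ref{feiqi}) and interpolation one controls $\||\nabla|^{s_c}\tilde u\|_{S^0(I)} \le C(M,E)$, hence $\|\tilde u\|_{Z(I)} \le C(M,E)$ by Bernstein and Strichartz; then partition $I$ into $J = J(M,E)$ consecutive subintervals $I_j$ on each of which $\|\tilde u\|_{Z(I_j)} \le \delta_0$ for a small threshold $\delta_0$ to be chosen. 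On each such subinterval one runs the perturbative estimate, and then sums over the $J$ pieces, with the errors being amplified by a constant raised to the $j$-th power — this is why the final constants $C(M,E)$ and the exponents $c_1, c_2$ depend on $M$ and $E$ in the stated way.

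On a single subinterval $I_j$ with small $\|\tilde u\|_{Z(I_j)}$, the function $w = u - \tilde u$ solves
\begin{equation*}
(i\partial_t + \Delta^2) w = -\big[f(u) - f(\tilde u)\big] + eq(u) - eq(\tilde u),
\end{equation*}
with $\|w(t_j)\|_{\dot H^{s_c}_x} \lesssim \varepsilon$ (the initial datum on $I_j$ is controlled by the $S^0$-bound on the previous piece plus $\varepsilon$). Applying the Strichartz estimate \eqref{equ26}, splitting the forcing term via the two-term structure $h = h_1 + h_2$ exactly as in the proof of Theorem \ref{feiqi}, and using the difference estimates — Lemma \ref{deridiff} for $s_c \in [1,2)$ to handle $|\nabla|^{s_c-1}[f(u) - f(\tilde u)]$ when $p$ is not an even integer, and Corollary \ref{fscqdfz} together with Lemma \ref{yashuo} for the remaining ranges — one obtains schematically
\begin{equation*}
\|w\|_{Z(I_j)} + \big\||\nabla|^{s_c} w\big\|_{S^0(I_j)} \lesssim \varepsilon + \big(\|\tilde u\|_{Z(I_j)} + \|w\|_{Z(I_j)}\big)^p \big(\|w\|_{Z(I_j)} + \big\||\nabla|^{s_c}w\big\|_{S^0(I_j)}\big).
\end{equation*}
A continuity (bootstrap) argument, valid because $\|\tilde u\|_{Z(I_j)} \le \delta_0$ is small and $\varepsilon$ is small relative to $M, E$, then closes to give $\|w\|_{Z(I_j)} + \||\nabla|^{s_c}w\|_{S^0(I_j)} \lesssim \varepsilon$ on $I_j$, which upgrades $S_{I_j}(w) \lesssim \varepsilon^{c_1}$ after interpolating between $Z$ and $S^0$ via Bernstein. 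Summing over $j = 1, \dots, J$ and tracking how the endpoint datum of $I_j$ feeds into $I_{j+1}$ yields the global bounds \eqref{eq2.2321}, with the last bound $\||\nabla|^{s_c}u\|_{S^0(I)} \le C(M,E)$ following by the triangle inequality $\||\nabla|^{s_c}u\|_{S^0(I)} \le \||\nabla|^{s_c}\tilde u\|_{S^0(I)} + \||\nabla|^{s_c}w\|_{S^0(I)}$.

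The main obstacle, and the step requiring the most care, is the control of the full $s_c$-derivative of the nonlinear difference $f(u) - f(\tilde u)$ when $s_c \in [1,2+p)$ and $p$ is not an even integer, since the nonlinearity is then of limited smoothness and ordinary fractional Leibniz/chain rules do not directly apply to $s_c > 1$ worth of derivatives. Here one must write $s_c = 1 + (s_c - 1)$, peel off one derivative using the Strichartz smoothing effect built into \eqref{equ26} (which is exactly why the hypothesis is $s_c < 2+p$ rather than $s_c < 1+p$), and then apply the refined difference estimate of Lemma \ref{deridiff} and the composition estimate of Lemma \ref{yashuo} with $(s_c - 1)$-many derivatives; the bookkeeping of the Hölder exponents so that all the intermediate Lebesgue indices are admissible pairs in $\Lambda_0$ or $\Lambda_1$ and so that the powers of the small quantity $\|\tilde u\|_{Z}$ come out correctly is the technical heart of the argument. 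Since the statement says this perturbation theory will be proved in the Appendix, I would defer the full exponent chase there and in the main text only indicate the bootstrap structure and the division into finitely many small-norm subintervals.
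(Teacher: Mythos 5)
There is a genuine gap, and it sits exactly at the point you chose to defer. Your argument for $p$ an even integer (or more generally $p>s_c-1$) is essentially the paper's proof of Lemma \ref{pertu1}: subdivide $I$ using $S_I(\tilde u)\leq M$, bootstrap in the $Z$-norm, iterate over subintervals. But in the hard case, $p$ not an even integer and $p\leq s_c-1$, the schematic inequality you write,
\begin{equation*}
\|w\|_{Z(I_j)}+\big\||\nabla|^{s_c}w\big\|_{S^0(I_j)}\lesssim \varepsilon+\big(\|\tilde u\|_{Z(I_j)}+\|w\|_{Z(I_j)}\big)^p\big(\|w\|_{Z(I_j)}+\big\||\nabla|^{s_c}w\big\|_{S^0(I_j)}\big),
\end{equation*}
asserts Lipschitz dependence of $|\nabla|^{s_c-1}\big[f(u)-f(\tilde u)\big]$ on $w$, and this is precisely what fails: $f'$ is only H\"older continuous of order $p\leq s_c-1$, so no such linear-in-$w$ bound is available in the standard Strichartz spaces. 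The tools you invoke do not fill the hole: Lemma \ref{deridiff} is stated only for $0<s<1$, hence covers at most $s_c<2$, and Lemma \ref{yashuo} (as well as \eqref{nlesf11}) produces only \emph{fractional} powers of the norms of $w$, such as $\||\nabla|^{s_c}w\|_{S^0}^{1-\frac1{s_c}}\|w\|^{p-1+\frac1{s_c}}$, never a small constant times the full norm. This is also why the conclusion of the lemma has $\varepsilon^{c_1},\varepsilon^{c_2}$ with exponents $c_1,c_2$ possibly less than one; a closed Lipschitz bootstrap of the kind you describe would give exponent $1$ and cannot be what happens.

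The paper's resolution, which your proposal omits, is to abandon the single-norm bootstrap in the low-power case and work in ``exotic Strichartz'' spaces $X(I),Y(I)$ carrying only $s=\frac p2$ derivatives (so that the nonlinearity and its differences can be estimated there), to prove the corresponding inhomogeneous estimate by the dispersive bound plus Hardy--Littlewood--Sobolev (Lemma \ref{exto}), to establish the nonlinear difference estimates \eqref{nlesf2} and \eqref{nlesf11}, and then to close a \emph{coupled} bootstrap in the two quantities $\|w\|_{X(I_j)}$ and $\||\nabla|^{s_c}w\|_{S^0(I_j)}$ via the iteration inequalities \eqref{iter21}--\eqref{iter22} (the short-time perturbation Lemma \ref{shorttimep}), finally summing over subintervals on which $\|\tilde u\|_{X(I_j)}\leq\delta$. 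Saying you would ``defer the full exponent chase to the Appendix'' does not supply this mechanism: without the reduced-derivative spaces (or some equivalent device, e.g.\ the Tao--Visan exotic Strichartz argument), the estimate you propose to bootstrap from is false in the very regime that makes the lemma nontrivial.
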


\subsection{Local well-posedness in homogenous space and stability}
As stated in the subsection 2.2, the assumption that the initial
data in the inhomogeneous critical Sobolev space can be removed by
the perturbation results. Now we give a detail proof.
\begin{proposition} [Local well-posedness in homogenous space]\label{lwph} Assume that $s_c\geq1$ if $p$ is an even
integer or $1\leq s_c<2+p$ otherwise. Let $u_0\in \dot
H_x^{s_c}(\R^d)$. Then, if  $I$ is a compact interval containing
zero such that
\begin{equation}\label{small1231}
\|e^{it\Delta^2}u_0\|_{Z(I)}:=\big\||\nabla|^{s_c-1}e^{it\Delta^2}u_0\big\|_{L_t^{2(p+1)}L_x^\frac{2d(p+1)}{(d-2)(p+1)-4}(I\times\R^d)}\leq\frac\eta2,
\end{equation}
where $\eta$ is as in Theorem \ref{feiqi}, then there exists a
unique solution $u$ to \eqref{equ1.1} on $I\times\R^d$. Furthermore,
the solution $u$ satisfies the bounds
\begin{align}\label{smalll11}
\|u\|_{Z(I)}\leq&2\eta\\\label{small123}
\big\||\nabla|^{s_c}u\big\|_{S^0(I)}\leq&2C\big\||\nabla|^{s_c}u_0\big\|_{L_x^2}+C\eta^{1+p},
\end{align}
where $C$ is the Strichartz constant as in  Proposition \ref{prop1}.

In particular, if $\|u_0\|_{\dot{H}^{s_c}_x(\R^d)}\leq\frac\eta2,$
then the solution $u$ is global and scatters.
\end{proposition}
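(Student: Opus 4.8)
The plan is to prove Proposition \ref{lwph} by a standard perturbative bootstrap: first run the contraction argument of Theorem \ref{feiqi} on truncated (inhomogeneous) data, then remove the truncation by invoking the Perturbation Lemma (Lemma \ref{pertu123}). Concretely, given $u_0\in\dot H^{s_c}_x(\R^d)$ satisfying \eqref{small1231}, I would for each $N\in 2^{\mathbb Z}$ set $u_0^N:=P_{\le N}u_0\in H^{s_c}_x(\R^d)$. By the dominated convergence theorem applied in frequency, $\|e^{it\Delta^2}u_0^N\|_{Z(I)}\to\|e^{it\Delta^2}u_0\|_{Z(I)}\le\eta/2<\eta$, so for $N$ large the hypothesis \eqref{small} of Theorem \ref{feiqi} holds for $u_0^N$; hence there is a solution $u^N$ on $I$ with $\|u^N\|_{Z(I)}\le 2\eta$, $\||\nabla|^{s_c}u^N\|_{S^0(I)}\le 2C\||\nabla|^{s_c}u_0^N\|_{L^2_x}+C\eta^{1+p}\le 2C\||\nabla|^{s_c}u_0\|_{L^2_x}+C\eta^{1+p}$, and $\|u^N\|_{S^0(I)}\le 2C\|u_0^N\|_{L^2_x}$.

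Next I would let $N\to\infty$ (or compare two parameters $N'>N$) and use Lemma \ref{pertu123} with $\tilde u=u^N$, $e q\equiv 0$ on both solutions, and $M\sim\eta$, $E\sim\||\nabla|^{s_c}u_0\|_{L^2_x}$. Since $\|P_{\le N'}u_0-P_{\le N}u_0\|_{\dot H^{s_c}_x}\to 0$ as $N,N'\to\infty$, the closeness hypothesis \eqref{eq2.2221} is met for $N$ large, and the conclusion \eqref{eq2.2321} gives that $(u^N)$ is Cauchy in the relevant Strichartz spaces; its limit $u$ solves \eqref{equ1.1} in the Duhamel sense on $I\times\R^d$, lies in $C_t(K;\dot H^{s_c}_x)\cap L^{(d+4)p/4}_{t,x}(K\times\R^d)$ for compact $K\subset I$, and inherits the bounds \eqref{smalll11} and \eqref{small123} by lower semicontinuity of the norms. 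Uniqueness follows from the same perturbation estimate applied to two putative solutions with the same data, after subdividing $I$ into finitely many subintervals on which the relevant $Z$-norms are small.

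For the last assertion: if $\|u_0\|_{\dot H^{s_c}_x}\le\eta/2$, then by the Strichartz estimate \eqref{equ24} with the admissible pair chosen so that the $s_c-1$ derivative lands in $L^{2(p+1)}_tL^{2d(p+1)/((d-2)(p+1)-4)}_x$, one has $\|e^{it\Delta^2}u_0\|_{Z(\R)}\lesssim\||\nabla|^{s_c}u_0\|_{L^2_x}\le\eta/2$ taking $\eta_0$ small, so \eqref{small1231} holds with $I=\R$; hence the solution is global with $\|u\|_{Z(\R)}\le 2\eta$. Global spacetime control of $S_\R(u)=\|u\|_{L^{(d+4)p/4}_{t,x}}$ then follows by interpolating the $Z$-bound against the $S^0(\R)$-bound (both the exponent $(d+4)p/4$ and the pair defining $Z$ are admissible combinations reachable this way), and scattering in $\dot H^{s_c}_x$ in the sense \eqref{1.2.1} is obtained in the usual way: $e^{-it\Delta^2}u(t)=u_0-i\int_0^t e^{-is\Delta^2}f(u(s))\,ds$ is Cauchy as $t\to\pm\infty$ because the tail of the Duhamel integral is controlled in $\dot H^{s_c}_x$ by $\||\nabla|^{s_c-1}f(u)\|_{L^2_tL^{2d/(d+2)}_x}\lesssim\|u\|_{Z}\|u\|_{\text{(Strichartz)}}^p$ over $|t|$ large, which tends to zero by the finiteness of the global norms.

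The main obstacle is the bookkeeping in the perturbation step: one must verify that the error term, which here is the difference $f(u^{N'})-f(u^N)$ viewed as a perturbation of the equation for $u^N$, genuinely satisfies the smallness condition \eqref{equ2.2121} in $L^2_tL^{2d/(d+2)}_x$ after $(s_c-1)$ derivatives — this uses the difference estimate (Lemma \ref{deridiff} when $s_c-1<1$, and the product rule \eqref{moser} together with Corollary \ref{fscqdfz} when $s_c-1\ge1$) and the fact that $\|u_0^{N'}-u_0^N\|_{\dot H^{s_c}_x}\to 0$, but it requires partitioning $I$ into $O_E(1)$ subintervals on which the $Z$-norms of $u^N$ are below the perturbation threshold $\varepsilon_1(M,E)$, and then iterating the perturbation bound across these subintervals so that the constants $C(M,E)$ do not blow up. Once this is set up, everything else is routine Strichartz manipulation.
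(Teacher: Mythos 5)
Your overall strategy is exactly the paper's: approximate $u_0$ by data in the inhomogeneous space $H^{s_c}_x$, solve for each approximation via Theorem \ref{feiqi} (the smallness \eqref{small1231} with $\eta/2$ leaving room for the approximation error in the linear flow), and then use the Perturbation Lemma \ref{pertu123} with $eq\equiv0$ to show the approximating solutions are Cauchy in $S^{s_c}(I)$, the limit being the desired solution with the bounds \eqref{smalll11}--\eqref{small123}. The one concrete flaw is your choice of approximating sequence: for $u_0\in\dot H^{s_c}_x(\mathbb{R}^d)$ with $s_c\geq1$, the low-pass truncation $P_{\leq N}u_0$ need \emph{not} lie in $H^{s_c}_x(\mathbb{R}^d)$, since the obstruction to membership in $H^{s_c}_x=L^2_x\cap\dot H^{s_c}_x$ sits at \emph{low} frequencies; for instance $\hat u_0(\xi)=|\xi|^{-s_c-\frac d2}\big|\log|\xi|\big|^{-1}\mathbf{1}_{\{|\xi|\leq 1/2\}}$ gives $u_0\in\dot H^{s_c}_x$ while $P_{\leq N}u_0\notin L^2_x$. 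The repair is immediate: truncate away the low frequencies instead (e.g.\ $u_0^N:=P_{>1/N}u_0$, which lies in $H^{s_c}_x$ by Bernstein and converges to $u_0$ in $\dot H^{s_c}_x$ by dominated convergence), or simply invoke density of $H^{s_c}$ in $\dot H^{s_c}$ abstractly, which is what the paper does. With that change the rest of your argument goes through as written.

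A further remark: the ``main obstacle'' you describe is not actually present. In the Cauchy-sequence step you apply Lemma \ref{pertu123} to two \emph{exact} solutions $u^N$, $u^{N'}$ of \eqref{equ1.1}, so $eq(u)=eq(\tilde u)=0$ and the smallness condition \eqref{equ2.2121} is vacuous; there is no error term $f(u^{N'})-f(u^N)$ to estimate, and the subdivision of $I$ into subintervals with small norms is carried out inside the proof of the perturbation lemma itself (Lemmas \ref{pertu1} and \ref{longpert1}), not something you must redo. The hypotheses you do need are the uniform $\dot H^{s_c}_x$ bound, the bound $S_I(u^N)\leq M$ (which follows from \eqref{smalll}--\eqref{small1} via Sobolev embedding, since $S_I(u)\lesssim\big\||\nabla|^{s_c}u\big\|_{S^0(I)}$), and the data closeness \eqref{eq2.2221}, all of which your setup provides.
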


\begin{proof}
Since $H^{s_c}(\R^d)$ is dense in $\dot{H}^{s_c}(\R^d),$
we know that for any  $u_0\in\dot{H}^{s_c}(\R^d),$ there exists a
sequence $\{u_n(0)\}\subset H^{s_c}(\R^d)$ such that
$$\|u_n(0)-u_0\|_{\dot{H}^{s_c}(\R^d)}\to 0,~\text{as}\quad
n\to+\infty.$$ Hence,
$\forall~\varepsilon>0,~\exists~N>0,~s.t.~\forall~n>N,$
$$\|u_n(0)-u_0\|_{\dot{H}^{s_c}(\R^d)}<\varepsilon.$$
By Strichartz estimate and \eqref{small1231}, we get for
$2C\varepsilon<\eta$ and $n>N,$
\begin{align*}
\big\|e^{it\Delta^2}u_n(0)\big\|_{Z(I)} \leq
C\|u_n(0)-u_0\|_{\dot{H}^{s_c}(\R^d)}+
\big\|e^{it\Delta^2}u_0\big\|_{Z(I)} \leq
C\varepsilon+\frac\eta2\leq\eta.
\end{align*}
This together with $u_n(0)\in H^{s_c}(\R^d),$ and Theorem
\ref{feiqi} yield that there exists a unique solution
$u_n(t,x):~I\times\R^d\to \mathbb{C}$ to \eqref{equ1.1} with initial
data $u_n(0)$ obeying \eqref{smalll}-\eqref{small2}. In particular,
it satisfies
\begin{equation}\label{xxundz}
\big\||\nabla|^{s_c}u_n\big\|_{S^0(I\times\R^d)}\lesssim\big\||\nabla|^{s_c}u_n(0)\big\|_{L_x^2}+\eta^{1+p}
\lesssim\big\||\nabla|^{s_c}u_0\big\|_{L_x^2}+\eta^{1+p}+\varepsilon.
\end{equation}

Next we use Lemma \ref{pertu123} to show the solution sequence
 $\{u_n(t,x)\}$ is Cauchy in
 $S^{s_c}(I)$, where $\|u\|_{S^{s_c}(I)}:=\big\||\nabla|^{s_c}u\big\|_{S^0(I)}.$
  In fact, it follows from  Lemma \ref{pertu123} if we set $\tilde{u}=u_m,~u=u_n,$ and $eq(u)=eq(\tilde{u})=0.$
Thus, by \eqref{eq2.2321}, we get
$$\big\||\nabla|^{s_c}(u_n-u_m)\big\|_{S^0(I)}\leq C(E,M)\varepsilon,$$ which means $\{u_n(t,x)\}$ is Cauchy in $S^{s_c}(I)$.
And so it convergent to a solution   $u(t,x)$ with initial data
$u(0,x)=u_0$ obeying $|\nabla|^{s_c}u\in S^0(I).$
\end{proof}

\vskip 0.2in Using the Theorem \ref{feiqi} and Lemma \ref{pertu123}
as well as their proof, one easily derives the following local
theory for \eqref{equ1.1}. We refer the author to Pausader
\cite{Pausader} for the special energy-critical case ($s_c=2$).

\begin{theorem}\label{lwp}
Let $s_c\geq1.$  Assume in addition that $s_c<2+p$ if $p$ is not an
even integer. Then, given $u_0\in \dot H_x^{s_c}(\R^d)$ and
$t_0\in\R$, there exists a unique maximal-lifespan solution $u:
I\times\R^d\to\C$ to \eqref{equ1.1} with initial data $u(t_0)=u_0$.
This solution also has the following properties:
\begin{enumerate}
\item $($Local existence$)$ $I$ is an open neighborhood of $t_0$.
\item $($Blowup criterion$)$ If\ $\sup(I)$ is finite, then $u$ blows up
forward in time in the sense of Definition \ref{def1.2.1}. If $\inf
(I)$ is finite, then $u$ blows up backward in time.
\item $($Scattering$)$ If $\sup(I)=+\infty$ and $u$ does not blow up
forward in time, then $u$ scatters forward in time in the sense
\eqref{1.2.1}. Conversely, given $v_+\in \dot H^{s_c}(\R^d)$, there
is a unique solution to \eqref{equ1.1} in a neighborhood of infinity
so that \eqref{1.2.1} holds.
\end{enumerate}
\end{theorem}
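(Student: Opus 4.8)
The plan is to construct the maximal-lifespan solution by the usual gluing procedure from the small-data local theory (Proposition \ref{lwph} and Theorem \ref{feiqi}), and then to read off the three properties using the Duhamel formula \eqref{duhamel.1}, the Strichartz estimates of Proposition \ref{prop1}, and the nonlinear estimates of Corollary \ref{fscqdfz} (and Lemma \ref{deridiff} when $s_c\in[1,2)$). For the local existence step, note that by Strichartz and Sobolev embedding $\|e^{it\Delta^2}u_0\|_{Z(\R)}\lesssim\||\nabla|^{s_c}u_0\|_{L_x^2}<\infty$, so by dominated convergence the map $\delta\mapsto\|e^{it\Delta^2}u_0\|_{Z((t_0-\delta,t_0+\delta))}$ tends to $0$ as $\delta\to0$; hence for $\delta$ small the hypothesis \eqref{small1231} holds on $I_0=(t_0-\delta,t_0+\delta)$ and Proposition \ref{lwph} yields a solution on $I_0$. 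For uniqueness on a common interval $J\ni t_0$: the set of $t\in J$ with $u(t)=\tilde u(t)$ is nonempty, closed by continuity into $C_t\dot H^{s_c}_x$, and open by applying Proposition \ref{lwph} to time-translated data on a short interval, hence equals $J$. Defining $I$ as the union of all open intervals about $t_0$ carrying a solution, and $u$ on $I$ as the common extension of these (well defined by uniqueness), produces the maximal-lifespan solution; this is item (1).

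For the blowup criterion it suffices to prove the contrapositive: if $T^*:=\sup(I)<\infty$ and $S_{[t_0,T^*)}(u)<\infty$, then $u$ extends past $T^*$, contradicting maximality. The heart of the matter is the implication
$$S_{[t_0,T^*)}(u)<\infty\ \Longrightarrow\ \big\||\nabla|^{s_c}u\big\|_{S^0([t_0,T^*))}<\infty ,$$
which one proves by partitioning $[t_0,T^*)$ into finitely many subintervals $I_j$ with $S_{I_j}(u)\leq\delta$ for a small $\delta=\delta(d,p)$; feeding \eqref{duhamel.1} into the Strichartz estimate \eqref{equ26} and estimating the nonlinearity by Corollary \ref{fscqdfz} (and Lemma \ref{deridiff} to handle the $(s_c-1)$ derivatives landing on $f(u)=|u|^pu$, which is where $s_c<2+p$ is used for non-integer $p$) bounds $\||\nabla|^{s_c}u\|_{S^0(I_j)}$ by $\||\nabla|^{s_c}u(t_j)\|_{L_x^2}(1+O(\delta^p))$, and chaining over $j$ closes the estimate. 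In particular $\||\nabla|^{s_c-1}f(u)\|_{L_t^2L_x^{2d/(d+2)}([t_0,T^*)\times\R^d)}<\infty$, so for $t_0\leq t<t'<T^*$,
$$\|u(t')-u(t)\|_{\dot H^{s_c}_x}\leq\big\|(e^{i(t'-t)\Delta^2}-1)u(t)\big\|_{\dot H^{s_c}_x}+\Big\|\int_t^{t'}e^{i(t'-s)\Delta^2}f(u(s))\,ds\Big\|_{\dot H^{s_c}_x}\longrightarrow0$$
as $t,t'\nearrow T^*$, whence $u(t)$ converges in $\dot H^{s_c}_x$ to some $u^*$. Running the local existence step with data $u^*$ at time $T^*$ gives a solution on $[T^*,T^*+\delta')$ which, by uniqueness, glues with $u$ to a solution on a neighborhood of $T^*$ — contradicting $\sup(I)=T^*$. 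The case $\inf(I)$ finite is identical, proving (2).

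For scattering, assume $\sup(I)=+\infty$ and $u$ does not blow up forward, i.e.\ $S_{[t_0,\infty)}(u)<\infty$; normalize $t_0=0$. By the implication above, $\||\nabla|^{s_c}u\|_{S^0([0,\infty))}<\infty$, hence $\||\nabla|^{s_c-1}f(u)\|_{L_t^2L_x^{2d/(d+2)}([0,\infty)\times\R^d)}<\infty$ and the integral $\int_0^\infty e^{-is\Delta^2}f(u(s))\,ds$ converges in $\dot H^{s_c}_x$. Put $v_+:=u_0-i\int_0^\infty e^{-is\Delta^2}f(u(s))\,ds\in\dot H^{s_c}_x$. Since $e^{it\Delta^2}$ is unitary on $\dot H^{s_c}_x$,
$$\big\|u(t)-e^{it\Delta^2}v_+\big\|_{\dot H^{s_c}_x}=\Big\|\int_t^\infty e^{-is\Delta^2}f(u(s))\,ds\Big\|_{\dot H^{s_c}_x}\longrightarrow0,\qquad t\to+\infty,$$
which is \eqref{1.2.1}; uniqueness of $v_+$ is immediate. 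For the converse, given $v_+\in\dot H^{s_c}_x$, Strichartz gives $\|e^{it\Delta^2}v_+\|_{Z([T,\infty))}\to0$ as $T\to\infty$, so for $T$ large the contraction argument of Theorem \ref{feiqi}, applied to the integral equation $u(t)=e^{it\Delta^2}v_+-i\int_\infty^t e^{i(t-s)\Delta^2}f(u(s))\,ds$ on $[T,\infty)$, produces a solution there satisfying $\|u(t)-e^{it\Delta^2}v_+\|_{\dot H^{s_c}_x}\to0$, and by the maximal-lifespan construction it extends to the desired solution. This gives (3).

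I expect the main obstacle to be the implication ``finite scattering size $\Rightarrow\ \||\nabla|^{s_c}u\|_{S^0}<\infty$'': one must run the Strichartz/nonlinear-estimate bootstrap on each subinterval of small scattering size while carefully tracking the number of derivatives on $f(u)=|u|^pu$, and it is precisely here that the restriction $s_c<2+p$ for non-integer $p$ enters, through the applicability of Corollary \ref{fscqdfz} and Lemma \ref{deridiff}.
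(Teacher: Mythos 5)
Your proposal is correct and is essentially the standard argument the paper itself invokes without writing out: the paper proves Theorem \ref{lwp} only by appealing to Theorem \ref{feiqi}/Proposition \ref{lwph}, Lemma \ref{pertu123} and the reference \cite{Pausader}, and your gluing construction, small-scattering-size bootstrap giving $S_I(u)<\infty\Rightarrow\||\nabla|^{s_c}u\|_{S^0(I)}<\infty$ (the same estimate as \eqref{longtsu} in the Appendix, where $s_c<2+p$ enters through Corollary \ref{fscqdfz}), and Duhamel-tail construction of $v_+$ are exactly that route. One cosmetic fix: rather than asserting $\|(e^{i(t'-t)\Delta^2}-1)u(t)\|_{\dot H^{s_c}}\to0$ directly (which needs precompactness of $\{u(t)\}$), show $e^{-it\Delta^2}u(t)$ is Cauchy in $\dot H^{s_c}_x$ using the same bound $\||\nabla|^{s_c-1}f(u)\|_{L_t^2L_x^{2d/(d+2)}}<\infty$ and then use strong continuity of the unitary group at the finite endpoint.
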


It is easy to show the following stability result by Proposition
\ref{lwph} and Lemma \ref{pertu123} as well as their proof.
\begin{corollary}[stability]\label{long1}  Assume that $s_c\geq1$ if $p$ is an even integer or $1\leq s_c<2+p$ otherwise. Let $I$ be
a compact time interval containing zero and $\tilde{u}$ be an near
solution to \eqref{equ1.1} on $I\times\R^d$ in the sense that
\begin{align*}
i\tilde{u}_t+\Delta\tilde{u}-f(\tilde{u})+e=0
\end{align*}
for some function $e$. Assume that for some constants $M,E>0$, we
have
\begin{align}\label{eq2.201}
\|\tilde{u}\|_{L_t^\infty(I; \dot{H}^{s_c}_x(\R^d))}\leq
E,\\\label{equ2.201} S_I(\tilde{u})\leq M.
\end{align}
 Let  $u_0\in\dot{H}_x^{s_c}(\R^d)$ and assume the smallness
 conditions
\begin{align}\label{eq2.213}
\|u_0-\tilde{u}_0\|_{\dot{H}^{s_c}_x(\R^d)}+\big\||\nabla|^{s_c-1}e
\big\|_{L_t^2L_x^\frac{ 2d}{d+2}(I\times\R^d)}\leq \varepsilon
\end{align}
where $0<\varepsilon<\epsilon_1=\epsilon_1( M, E)$ is a small
constant. Then there exists a unique solution $u: I\times\R^d\to \C$
 to \eqref{equ1.1} with initial data $u_0$ at time $t=0$ obeying
\begin{align}\label{sta11}
S_I(u-\tilde{u})\leq & C(M,E)\varepsilon^{c_1}\\\label{sta21}
\big\||\nabla|^{s_c}(u-\tilde{u})\big\|_{S^0(I)}\leq &
C(M,E)\varepsilon^{c_2},\\\label{sta31}
\big\||\nabla|^{s_c}u\big\|_{S^0(I)}\leq & C(M,E).
\end{align}
where $c_1,~c_2$ are positive constants that depend on $d,p,E$ and
$M$.
\end{corollary}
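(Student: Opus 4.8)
This corollary is the long-time (``stability'') version of the short-time information packaged in Lemma \ref{pertu123}: that lemma does all of the analytic work, but it presupposes that $u$ exists and is bounded in $\dot H^{s_c}_x$, whereas here one is given an a priori bound only on the approximate solution $\tilde u$ and must construct $u$. I would obtain the statement by the standard divide-and-iterate scheme, using Proposition \ref{lwph} to produce $u$ on short intervals and Lemma \ref{pertu123} to control the difference there.

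First I would record a \emph{short-time} version: under the hypotheses of the corollary, assume in addition that $\|\tilde u\|_{Z(I)}\le\delta_0$ for a small $\delta_0=\delta_0(E)$. Writing $\tilde u$ via its Duhamel formula and invoking the Strichartz estimate \eqref{equ26} with Corollary \ref{fscqdfz} (together with the elementary Strichartz/Sobolev bounds that the a priori hypothesis on $\tilde u$ supplies), one sees that $\|e^{i(t-t_0)\Delta^2}\tilde u_0\|_{Z(I)}$ is small and that $S_I(\tilde u)\le M'=M'(E)$; combined with \eqref{eq2.213} and one more Strichartz estimate this gives $\|e^{i(t-t_0)\Delta^2}u_0\|_{Z(I)}\le\eta/2$. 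Then Proposition \ref{lwph} produces the solution $u$ on $I$ with $\||\nabla|^{s_c}u\|_{S^0(I)}\le 2C\|u_0\|_{\dot H^{s_c}_x}+C\eta^{1+p}=:E'=E'(E)$, so in particular $\|u\|_{L_t^\infty(I;\dot H^{s_c}_x)}\le E'$. Now both $u$ and $\tilde u$ satisfy the a priori bounds \eqref{eq2.2012}--\eqref{equ2.20122} required by Lemma \ref{pertu123}, with $E$ replaced by $E'$ and $M$ by $M'$, so applying that lemma with $eq(u)\equiv0$ and $eq(\tilde u)=e$ — for which \eqref{eq2.2221} and \eqref{equ2.2121} are consequences of \eqref{eq2.213} — yields \eqref{sta11}--\eqref{sta31} on $I$ with constants depending only on $E$.

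For the full statement, the a priori bounds $S_I(\tilde u)\le M$ and $\|\tilde u\|_{L_t^\infty\dot H^{s_c}_x}\le E$ force, via the local theory used in the proof of Lemma \ref{pertu123}, a finite bound $\|\tilde u\|_{Z(I)}\le C(M,E)$, so $I$ splits into $J=J(M,E)$ consecutive subintervals $I_1,\dots,I_J$ with $\|\tilde u\|_{Z(I_j)}\le\delta_0$ on each. One then iterates the short-time version. On $I_1$ the data are $\varepsilon$-close, and the conclusion gives $\|u(t_1)-\tilde u(t_1)\|_{\dot H^{s_c}_x}\le C(E)\varepsilon^{c_2}$ at the common endpoint of $I_1$ and $I_2$; this serves as the new data-closeness on $I_2$ (the restriction of $e$ to $I_2$ only decreases its norm, so the error smallness persists there), and so on. After $j$ steps the endpoint $\dot H^{s_c}_x$-distance is bounded by $C(E)^{1+c_2+\cdots+c_2^{j-1}}\varepsilon^{\,c_2^{\,j}}$ and the a priori $\dot H^{s_c}_x$-bound on $u$ stays $\le E'+O(\varepsilon)$; since there are only $J(M,E)$ steps, choosing $\varepsilon<\varepsilon_1(M,E)$ small enough lets one run all of them, and summing the per-interval estimates produces the solution $u$ on all of $I$ together with \eqref{sta11}--\eqref{sta31} for suitable positive $c_1,c_2$ and $C=C(M,E)$. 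Uniqueness of $u$ follows from Theorem \ref{lwp}.

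The only genuinely substantive ingredient is the fact — built into the proof of Lemma \ref{pertu123} — that $S_I(\tilde u)\le M$ together with the $\dot H^{s_c}_x$-bound already controls $\|\tilde u\|_{Z(I)}$, since this is what makes the finite subdivision possible. The rest is bookkeeping, the delicate point being to check that the $\dot H^{s_c}_x$-closeness of $u$ and $\tilde u$ degrades only by a fixed $E$-dependent factor and a fixed power of $\varepsilon$ at each of the $J(M,E)$ steps, so that the resulting threshold $\varepsilon_1(M,E)$ and constant $C(M,E)$ depend on $M$ and $E$ alone.
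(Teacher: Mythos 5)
Your proposal is correct and follows essentially the same route as the paper, which proves Corollary \ref{long1} exactly by combining Proposition \ref{lwph} (to construct $u$ locally) with Lemma \ref{pertu123} and the subdivision-plus-induction scheme already contained in the proofs of the perturbation lemmas in the Appendix. Your elaboration of the bookkeeping (finite subdivision via $\|\tilde u\|_{Z(I)}\leq C(E,M)$, per-interval degradation $\varepsilon\mapsto C\varepsilon^{c_2}$ over $J(M,E)$ steps, and smallness of $\varepsilon_1(M,E)$) is precisely the argument the paper intends.
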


%%%%%%%%%%%%%%%%%%%%%%%%%%%%%%%%%%%%%%%%%%%%%%%%%%%%%%%%%%%%%%%%%%%
\section{Extinction of three scenarios}
 \setcounter{section}{3}\setcounter{equation}{0}

In this section, we preclude three scenarios in the sense of Theorem
\ref{three} under the assumption that Theorem \ref{regular} holds.
We will prove Theorem \ref{regular} in the next section. First, we
preclude the finite time blowup solution by making use of No-waste
Duhamel formula.

\subsection{The finite blowup solution}
We argue by contradiction. Assume that there exists a solution $u:
I\times\R^d\to\C$ which is a finite time blowup in the sense of
Theorem \ref{three}. Assume also $T:=\sup(I)<+\infty,$ then, we have
by \eqref{assume1.1} and Sobolev embedding
\begin{equation}\label{sobolev}
\|u\|_{L_t^\infty
L_x^\frac{pd}4(I\times\R^d)}\lesssim\|u\|_{L_t^\infty\dot{H}^{s_c}(I\times\R^d)}\lesssim1.
\end{equation}

 First, we consider the
energy-subcritical and energy-critical case.

{\bf Case 1: $1\leq s_c\leq2$.} Using Strichartz estimate, Sobolev
embedding, \eqref{sobolev}, \eqref{nwd}
 and H\"older's inequality, we have
\begin{align*}\label{ngtive1}
\big\||\nabla|^{s_c-2}u(t)\big\|_{L_x^2}\leq&\Big\|\int_t^Te^{i(t-s)\Delta^2}|\nabla|^{s_c-2}f(u(s))ds\Big\|_{L_x^2}\\\nonumber
\lesssim&\big\||\nabla|^{s_c-2}f(u(s))\big\|_{L_t^2L_x^\frac{2d}{d+4}([t,T)\times\R^d)}\\\nonumber
\lesssim&(T-t)^\frac12\|f(u)\|_{L_t^\infty L_x^\frac{pd}{4(p+1)}([t,T)\times\R^d)}\\\nonumber
\lesssim&(T-t)^\frac12\|u\|_{L_t^\infty
L_x^\frac{pd}4([t,T)\times\R^d)}^{p+1}\\\nonumber
\lesssim&(T-t)^\frac12.
\end{align*}
Interpolating this with $u\in L_t^\infty \dot{H}^{s_c}([0,T)\times\R^d)$, we deduce that
$$\|u(t)\|_{L_x^2}\lesssim (T-t)^{\frac{s_c}4}\to0,~\text{as}~t\to T$$
which shows that $u\in L_t^\infty L_x^2([0,T)\times\R^d)$ and also
$u\equiv0$ by the mass conservation. This contradicts with the fact
that $u$ is a blowup solution.

Next, we consider the energy-supercritical case. Using the
assumption \eqref{assume1.1} and Sobolev embedding, we have
\begin{equation}\label{sobolev1}
\|u\|_{L_t^\infty
L_x^\frac{pd}4(I\times\R^d)}\lesssim\big\||\nabla|^{s_c-2}u\big\|_{L_t^\infty
L_x^\frac{2d}{d-4}(I\times\R^d)}\lesssim\|u\|_{L_t^\infty\dot{H}^{s_c}(I\times\R^d)}\lesssim1.
\end{equation}

 {\bf Case 2:
$s_c\in(2,4].$} Combining \eqref{sobolev1} with No waste Duhamel
formula \eqref{nwd}, Strichartz estimate, H\"older's inequality and
Corollary \ref{fscqdfz}, one has
\begin{align}\label{ngtive}
\big\||\nabla|^{s_c-2}u(t)\big\|_{L_x^2}\leq&\Big\|\int_t^Te^{i(t-s)\Delta^2}|\nabla|^{s_c-2}f(u(s))ds\Big\|_{L_x^2}\\\nonumber
\lesssim&\big\||\nabla|^{s_c-2}f(u(s))\big\|_{L_t^2L_x^\frac{2d}{d+4}([t,T)\times\R^d)}\\\nonumber
\lesssim&(T-t)^\frac12\big\||\nabla|^{s_c-2}u\big\|_{L_t^\infty
L_x^\frac{2d}{d-4}([t,T)\times\R^d)}\|u\|_{L_t^\infty
L_x^\frac{pd}4([t,T)\times\R^d)}^p\\\nonumber
\lesssim&(T-t)^\frac12.
\end{align}
Interpolating this with \eqref{assume1.1}, we derive that
$$E(u_0)=E(u(t))\to 0,\quad \text{as}\quad t\to T,$$ which implies
that $u\equiv0.$ This contradicts with the fact that $u$ is a blowup
solution.

{\bf Case 3: $s_c\in(4,6]$.} It follows from  \eqref{ngtive} that
$u\in L_t^\infty([0,T); \dot{H}^{s_c-2}_x(\R^d)).$ Using  No waste
Duhamel formula \eqref{nwd}, Strichartz estimate, H\"older's
inequality and fractional chain rule, we obtain
\begin{align}\label{ngtive1}
\big\||\nabla|^{s_c-4}u(t)\big\|_{L_x^2}\leq&\Big\|\int_t^Te^{i(t-s)\Delta^2}|\nabla|^{s_c-4}f(u(s))ds\Big\|_{L_x^2}\\\nonumber
\lesssim&\big\||\nabla|^{s_c-4}f(u(s))\big\|_{L_t^2L_x^\frac{2d}{d+4}([t,T)\times\R^d)}\\\nonumber
\lesssim&(T-t)^\frac12\big\||\nabla|^{s_c-4}u\big\|_{L_t^\infty
L_x^\frac{2d}{d-4}([t,T)\times\R^d)}\|u\|_{L_t^\infty
L_x^\frac{pd}{4}([t,T)\times\R^d)}^p\\\nonumber
\lesssim&(T-t)^\frac12,
\end{align}
Interpolating this with \eqref{assume1.1} again, we also deduce that
$$E(u_0)=E(u(t))\to 0,\quad \text{as}\quad t\to T.$$
This contradicts with the fact that $u$ is a blowup
solution.

{\bf Case 4: $s_c\in(6,+\infty)$.} We can iterate the argument
presented above to obtain the contradiction.

Hence, we exclude the finite time blowup solution in the sense of
Theorem \ref{three}.

\subsection{The soliton-like solution}Next, we adopt the interaction Morawetz estimate to kill the
soliton-like solution.

We argue by contradiction. Assume that there exists a solution $u:
\R\times\R^d\to\C$ which is a soliton-like solution in the sense of
Theorem \ref{three}. Assume also Theorem \ref{regular} holds. In
particular, we have
\begin{equation}
u(t,x)\in L_t^\infty(\R; L_x^2(\R^d)).
\end{equation}

Therefore, the solution $u$ satisfies the following interaction
Morawetz estimate.
\begin{lemma}[Interaction Morawetz estimate, \cite{MiaoWuZhang, Pausader1}]\label{morawetz1}
Assume that $d\geq7.$ Let $u: \R\times\R^d\to\C$ be the solution to
\eqref{equ1.1}, and $u\in L_t^\infty(\R; {H}^\frac12_x(\R^d)).$
Then, for any compact interval $I\subset\R$, we have
\begin{equation}\label{mest1}
\int_I\iint_{\R^d\times\R^d}\frac{|u(t,x)|^{2}|u(t,y)|^2}{|x-y|^5}dxdydt\lesssim\|u\|_{L_t^\infty
L_x^2(I\times\R^d)}^2 \big\||\nabla_{x}|^\frac12u\|_{L_t^\infty
L^2_x(I\times\R^d)}^2\lesssim1.
\end{equation}
\end{lemma}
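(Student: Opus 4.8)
The plan is to prove a Morawetz-type inequality centered at the two-particle weight $|x-y|$, following the now-standard interaction Morawetz philosophy of Colliander--Keel--Staffilani--Takaoka--Tao, adapted to the biharmonic propagator. First I would form the interaction Morawetz action
\[
M(t)=\iint_{\R^d\times\R^d}|u(t,y)|^2\,\nabla a(x-y)\cdot\Big(2\,\IM\big(\bar u(t,x)\nabla u(t,x)\big)\Big)\,dx\,dy,
\]
with the convex weight $a(x)=|x|$ (the Lin--Strauss weight), so that $M(t)$ is the "interaction" of the mass density of $u(t,y)$ against the momentum density of $u(t,x)$. The first step is the a priori bound $|M(t)|\lesssim\|u\|_{L_x^2}^3\|\nabla u\|_{L_x^2}\lesssim\|u\|_{L_x^2}^2\||\nabla|^{1/2}u\|_{L_x^2}^2$, which follows from Cauchy--Schwarz and interpolation since $|\nabla a|\le 1$; this is exactly the right-hand side of \eqref{mest1}. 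Hence it suffices to show that the time-integral of $\tfrac{d}{dt}M(t)$ controls the left-hand side of \eqref{mest1} up to the same quantity; integrating over a compact $I$ then gives the claim.

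The core computation is differentiating $M(t)$ in time using the equation $iu_t=-\Delta^2 u-|u|^pu$ and its conjugate. As in the second-order case, the contributions split into a \emph{linear} (kinetic) part, coming from $\Delta^2$, and a \emph{nonlinear} (potential) part, coming from $|u|^pu$; the identities from the local mass- and momentum-conservation laws $\partial_t|u|^2=-\nabla\cdot\vec p$ and $\partial_t\vec p=\dots$ for the fourth-order flow (whose precise form is recorded in \cite{Pausader1, MiaoWuZhang}) are plugged in. The defocusing sign of the nonlinearity makes the potential term have a favorable sign, so it may be discarded (kept on the good side). The linear part, after integrating by parts in both $x$ and $y$ and using that $\Delta(|x|)=\tfrac{d-1}{|x|}\ge0$ and that the Hessian $D^2(|x|)=\tfrac{1}{|x|}(\mathrm{Id}-\widehat x\otimes\widehat x)\succeq0$, produces the nonnegative bilinear diagonal term, plus — and this is the new feature relative to NLS — higher-order correction terms of the schematic form $\iint |u(y)|^2\, (\text{third derivatives of }a)(x-y)\,\partial^2 u(x)\cdot\partial u(x)$ coming from the fact that $\Delta^2$ is fourth-order. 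The restriction $d\ge7$ is precisely what makes the leading positive term dominate: the weight $|x-y|^{-5}$ appearing on the left of \eqref{mest1} arises as $-\Delta^3|x|$ (up to a positive constant), which is a positive distribution exactly when $d\ge7$, while the lower-order error terms carry weights like $|x-y|^{-3}$ or $|x-y|^{-1}$ that can be absorbed using $H^{1/2}$-interpolation, Hardy's inequality, and the fact that $\||\nabla|^{1/2}u\|_{L_x^2}$ is bounded.

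The main obstacle will be the bookkeeping of these fourth-order correction terms: unlike for NLS, differentiating the biharmonic flow twice in the Morawetz action generates several terms with $x$-derivatives of $u$ up to third order, and one must verify that, after integration by parts, each of them is either manifestly of the good sign, a total derivative that integrates to a bounded boundary term, or a lower-order term dominated by $-c\,\Delta^3|x| = c\,|x-y|^{-5}$ via Cauchy--Schwarz in $H^{1/2}$. I would organize this by writing $a(x)=|x|$, expanding $\langle \Delta^2 u, \text{multiplier}\rangle$ symmetrically, and grouping terms according to the total number of derivatives falling on $u(x)$; the $d\ge7$ condition enters when checking positivity of the top term and when estimating the Hardy-type pieces $\iint |u(y)|^2 |x-y|^{-3}|\nabla u(x)|^2$, which requires $d-4>0$ for the relevant Hardy inequality together with the $\dot H^{1/2}$ bound. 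Once all error terms are shown to be $\lesssim\|u\|_{L^2_x}^2\||\nabla|^{1/2}u\|_{L^2_x}^2$ (using only the hypothesis $u\in L^\infty_t H^{1/2}_x$), the fundamental theorem of calculus applied to $M(t)$ over $I$, combined with the uniform-in-$t$ bound on $M(t)$, yields \eqref{mest1}. For the detailed implementation I would refer to \cite{Pausader1} and \cite{MiaoWuZhang}, where the fourth-order interaction Morawetz identity is carried out in full.
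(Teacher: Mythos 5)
The paper does not actually prove Lemma \ref{morawetz1}: the estimate is imported verbatim from \cite{MiaoWuZhang,Pausader1}, so there is no in-paper proof to match. Your sketch, which sets up the CKSTT-type interaction action $M(t)=\iint |u(t,y)|^2\,\nabla a(x-y)\cdot 2\,\Im\big(\bar u\nabla u\big)(t,x)\,dx\,dy$ with $a(x)=|x|$, discards the defocusing nonlinear contribution, and defers the fourth-order identity and its error-term bookkeeping to exactly those two references, is therefore consistent with how the paper itself treats the statement, and the overall road map (fundamental theorem of calculus for $M(t)$ plus a uniform bound on $M(t)$) is the right one.

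However, the two quantitative claims you do supply yourself are incorrect. First, the boundary-term bound: Cauchy--Schwarz indeed gives $|M(t)|\lesssim\|u\|_{L^2_x}^3\|\nabla u\|_{L^2_x}$, but the asserted step $\|u\|_{L^2_x}^3\|\nabla u\|_{L^2_x}\lesssim\|u\|_{L^2_x}^2\big\||\nabla|^{1/2}u\big\|_{L^2_x}^2$ is interpolation run backwards: one has $\big\||\nabla|^{1/2}u\big\|_{L^2_x}^2\le\|u\|_{L^2_x}\|\nabla u\|_{L^2_x}$, not the reverse; moreover, under the lemma's hypothesis $u\in L^\infty_t H^{1/2}_x$ the quantity $\|\nabla u\|_{L^2_x}$ need not even be finite. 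The correct bound $|M(t)|\lesssim\|u\|_{L^2_x}^2\big\||\nabla|^{1/2}u\big\|_{L^2_x}^2$ requires the separate Hardy/Calder\'on-type argument for the half-derivative used in the NLS literature and in \cite{MiaoWuZhang,Pausader1}; without it your argument only yields a weaker right-hand side and, as stated, no bound at all. Second, the dimensional mechanism: a direct computation gives $\Delta^3|x|=3(d-1)(d-3)(d-5)\,|x|^{-5}$, so the positive quantity is $+\Delta^3|x|$, not $-\Delta^3|x|$, and it is positive already for $d\ge6$; the claim that ``$-\Delta^3|x|$ is a positive distribution exactly when $d\ge7$'' is wrong in both sign and threshold, and the restriction $d\ge7$ in the cited proofs comes from the treatment of the remaining bilinear error terms (the pieces you propose to absorb by Hardy-type estimates with only $\dot H^{1/2}$ control), not from this positivity. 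So while the strategy matches the references, the analytical content is either deferred to them or, where filled in, erroneous.
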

From \eqref{mest1}, we know that
$$\big\||\nabla|^{-\frac{d-5}2}(|u|^2)\big\|_{L_{t,x}^2(I\times\R^d)}\lesssim1.$$ And so, it follows from
\cite{Pausader1} that
\begin{equation}\label{djmr}
\begin{split}
 \big\||\nabla|^{-\frac{d-5}4}u\big\|_{
L^4_{t,x}(I\times\R^d)}\simeq&\Big\|\Big(\sum_{N\in2^{\Z}}N^{-\frac{d-5}2}|P_Nu|^2\Big)^\frac12\Big\|_{L_{t,x}^4(I\times\R^d)}\\
\lesssim&\big\||\nabla|^{-\frac{d-5}2}(|u|^2)\big\|_{
L^2_{t,x}(I\times\R^d)}^\frac12\lesssim1.
\end{split}
\end{equation}
Interpolating this with $u\in L_t^\infty(\R; \dot{H}_x^1(\R^d))$, we
obtain for all compact time interval $I\subset\R$
\begin{equation}\label{glmd}
\|u\|_{L_t^{d-1}
L_x^\frac{2(d-1)}{d-3}(I\times\R^d)}\lesssim1.
\end{equation}

Now we claim that
\begin{equation}\label{glmd1}
\|u\|_{L_x^\frac{2(d-1)}{d-3}(\R^d)}\gtrsim1,\quad \text{uniformly
for}\quad t\in\R.
\end{equation}
If this claim holds, then we derive a contradiction by taking the
length of the interval $I$ to be sufficiently large.

Hence it suffices to prove the claim \eqref{glmd1}. We argue by
contradiction. Suppose that the claim fails, then there exists a
time sequence $\{t_n\}$ such that $u(t_n)$ converges to zero in
$L_x^\frac{2(d-1)}{d-3}$. On the other hand, $u(t_n)$ converges
weakly to zero in $\dot{H}^{s_c}(\R^d)$ since $u(t)$ is uniformly
bounded in  $\dot{H}^{s_c}(\R^d)$. This contradicts with the fact
that the orbit of $u$ is precompact in $\dot{H}^{s_c}(\R^d)$ and $u$
is not identically zero.

And so the claim holds. This completes the proof of
excluding the soliton-like solution in the sense of Theorem
\ref{three}.

\subsection{Low to high frequency cascade}
Finally, we turn to exclude the low to high frequency cascade
solution.

We argue by contradiction. Assume that there exists a solution $u:
\R\times\R^d\to\C$ which is a low to high frequency cascade solution
in the sense of Theorem \ref{three}. Assume also that Theorem
\ref{regular} holds. In particular, there exists $\varepsilon>0$
such that
\begin{equation}\label{jiashe}
u(t,x)\in L_t^\infty(\R; \dot{H}_x^{-\varepsilon}(\R^d)).
\end{equation}

From $\varlimsup\limits_{t\to+\infty}N(t)=+\infty$, we can find a
time sequence $\{t_n\}$ such that
\begin{equation}\label{jiasmall}
\lim_{n\to+\infty}N(t_n)=+\infty.
\end{equation}

Using Bernstein's inequality, interpolation, the compactness
\eqref{xiaoc},  the hypothesis \eqref{jiashe}, and the assumption
\eqref{assume1.1}, we have
\begin{align}\nonumber
\|u(t_n,x)\|_{L_x^2}\leq&\|P_{\leq
c(\eta)N(t_n)}u\|_{L^2_x}+\|P_{\geq
c(\eta)N(t_n)}u\|_{L^2_x}\\\nonumber
\lesssim&\|u\|_{\dot{H}^{-\varepsilon}}^\frac{s_c}{s_c+\varepsilon}\big\|P_{\leq
c(\eta)N(t_n)}u\big\|_{\dot{H}^{s_c}_x}^{\frac{\varepsilon}{s_c+\varepsilon}}+\big(c(\eta)N(t_n)\big)^{-s_c}\|u\|_{\dot{H}^{s_c}_x}\\\label{jiashe1}
\lesssim&\eta^{\frac{\varepsilon}{s_c+\varepsilon}}+\big(c(\eta)N(t_n)\big)^{-s_c},
\end{align}
Taking $\eta$ small, and then $n$ large, we have by \eqref{jiasmall}
and \eqref{jiashe1}
$$M(u_0)=M(u(t_n)) \to 0,~\text{as}~n\to\infty,$$
which implies that $u\equiv0.$ This contradicts with the fact that
$u$ is a blowup solution. Therefore, we preclude the low to high
frequency cascade solution in the sense of Theorem \ref{three}.

In sum, it reduces to prove Theorem \ref{regular}.

%%%%%%%%%%%%%%%%%%%%%%%%%%%%%%%%%%%%%%%%%%%%%%%%%%%%%%%%%%%%%%%%%%%
\section{Negative regularity}
 \setcounter{section}{4}\setcounter{equation}{0}

As stated in Section 3, it remains to show Theorem \ref{regular}.
That is, we need to prove that the global solutions to
\eqref{equ1.1} which are almost periodic modulo symmetries enjoy the
negative regularity. We will divide two steps to prove it. First, we
show additional decay for the soliton-like and frequency-cascade
solutions in the sense of Theorem \ref{three}. And then, this
together with  the double Duhamel trick yields the negative
regularity for the soliton-like and frequency-cascade solutions.

\subsection{Additional Decay}
We first consider the energy-supercritical case.
\begin{proposition}[Additional decay I, energy-supercritical]\label{adecay1}  Let $d\geq9$ and $s_c>2.$
Assume in addition that $s_c<2+p$ if $p$ is not an even integer. And
let $u$ be a global solution to \eqref{equ1.1} that is almost
periodic modulo symmetries. In particular,
\begin{equation}\label{js}
\|u\|_{L_t^\infty(\R;\dot{H}^{s_c}_x(\R^d))}<+\infty.
\end{equation} And assume
that $\inf\limits_{t\in\R}N(t)\geq1.$ Then, we have
\begin{equation}\label{ad1}
u\in L_t^\infty(\R;L_x^q(\R^d)),\quad
q\in\Big(\frac{2d}{d-4},\frac{d}4p\Big].
\end{equation}
\end{proposition}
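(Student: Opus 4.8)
The plan is to bootstrap integrability up from the Sobolev-embedding baseline to the target range $q\in\big(\tfrac{2d}{d-4},\tfrac{d}{4}p\big]$ by iterating the No-waste Duhamel formula \eqref{nwd}, exactly as in the negative-regularity arguments of \cite{KV2010,MiaoXuZhao}. The starting point is that \eqref{js} together with Sobolev embedding and $\inf_t N(t)\geq1$ gives $u\in L_t^\infty(\R;L_x^{q_0}(\R^d))$ for $q_0=\tfrac{pd}{4}$ (the endpoint), and more usefully the frequency-localized smallness \eqref{apsf} (equivalently the high-frequency tail is small uniformly in $t$). The idea is to run a ladder: assuming $u\in L_t^\infty L_x^{q}$ for some $q$ in a range we control, apply \eqref{nwd} to a high-frequency piece $P_{>N}u(t)$, estimate $\big\|P_{>N}u(t)\big\|_{L_x^{\tilde q}}$ via the Duhamel integral and dispersive/Strichartz bounds for $e^{it\Delta^2}$ (Lemma \ref{disp}, Proposition \ref{prop1}), use Bernstein and Corollary \ref{fscqdfz} (or Lemma \ref{nlbern} when $p<1$) to pass the derivative onto the nonlinearity and trade it for a gain in $N$, and thereby improve the exponent. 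The low-frequency piece $P_{\leq N}u$ is handled by Bernstein from the $L_x^2$ (or $\dot H^{s_c}$) bound. Summing over dyadic $N$ and optimizing yields $u\in L_t^\infty L_x^{\tilde q}$ for a smaller $\tilde q$, and finitely many iterations reach any $q>\tfrac{2d}{d-4}$.

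Concretely, the key steps in order are: (i) record the base case $u\in L_t^\infty L_x^{pd/4}$ and the high-frequency smallness from almost periodicity; (ii) fix $t$, write $P_{>N}u(t)=-i\lim_{T}\int_t^T e^{i(t-s)\Delta^2}P_{>N}f(u(s))\,ds$ and split the $s$-integral into a "near" region $|s-t|\le N^{-4}$ handled by the local-in-time Strichartz estimate \eqref{equ26} and a "far" region $|s-t|>N^{-4}$ handled by the dispersive estimate \eqref{dispers} with an appropriate Lebesgue exponent; (iii) on each region estimate $\|f(u)\|$ in a suitable dual norm by Hölder and Corollary \ref{fscqdfz}, writing $f(u)=|u|^p u$ as a product of $p$ copies of $u$ in the currently-known space $L_x^{q}$ and one copy carrying the derivative, so that the derivative count $s_c-$ (something) stays below the admissible threshold; (iv) collect the bound $\big\|P_{>N}u\big\|_{L_t^\infty L_x^{\tilde q}}\lesssim N^{-\theta}$ for some $\theta>0$ coming from the dispersive decay exponent $\tfrac d4\big(1-\tfrac2q\big)$, sum in $N\in 2^{\mathbb Z}$, combine with the low-frequency Bernstein bound, and conclude $u\in L_t^\infty L_x^{\tilde q}$; (v) iterate, checking at each stage that $\tilde q$ decreases by a fixed multiplicative factor bounded away from $1$, so the process terminates after finitely many steps once $\tilde q$ drops below $\tfrac{2d}{d-4}$ — and then interpolate back to fill the whole interval $\big(\tfrac{2d}{d-4},\tfrac{pd}{4}\big]$.

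The main obstacle I expect is bookkeeping the derivative budget and the Lebesgue exponents simultaneously: one must take $(s_c-2)$-many derivatives of $f(u)=|u|^p u$ (using the smoothing gain in Proposition \ref{prop1} that converts $s$-derivatives of $u$ into $(s-1)$-derivatives of the forcing term), and for $p$ not an even integer this is precisely where the hypothesis $s_c<2+p$ and the fractional-chain-rule lemmas (Lemma \ref{deridiff}, Lemma \ref{yashuo}, Lemma \ref{nlbern}) are needed; one also has to verify that every exponent appearing is $\geq 1$ (or strictly $>1$ where the chain rule demands it), which is exactly the place where the dimensional restriction $d\geq9$ enters, as flagged in Remark \ref{rem1.1}(i): the Duhamel-integral estimate needs the known $L_x^q$ bound with $q<\tfrac{pd}{4}$ while the output lands in $L_x^{\tilde q}$ with $\tilde q\geq 2p$, and these two constraints are compatible only for $d\geq9$. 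The $p<1$ case additionally forces the replacement of the ordinary product rule by the nonlinear Bernstein inequality, which must be slotted into step (iii) without breaking the gain in $N$.
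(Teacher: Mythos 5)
There is a genuine gap, and it sits at the heart of the argument. For $\tilde q<\frac{pd}{4}$ the obstruction to $u\in L_t^\infty L_x^{\tilde q}$ is the \emph{low}-frequency part of $u$, not the high-frequency part: Bernstein plus \eqref{js} gives $\|P_Nu\|_{L_x^{\tilde q}}\lesssim N^{\frac4p-\frac{d}{\tilde q}}$, whose exponent is \emph{negative} once $\tilde q<\frac{pd}{4}$, so the sum over $N\to0$ diverges, and there is no a priori $L_t^\infty L_x^2$ bound to invoke (that is essentially what Theorem \ref{regular}, which relies on this proposition, is designed to produce). Your step ``the low-frequency piece $P_{\leq N}u$ is handled by Bernstein from the $L_x^2$ (or $\dot H^{s_c}$) bound'' is therefore circular in the first alternative and goes in the wrong direction in the second, while your Duhamel analysis is aimed at the high-frequency tail $P_{>N}u$ with a bound $N^{-\theta}$, i.e. at the part of the solution that is already controlled by $\dot H^{s_c}$ and Sobolev/Bernstein. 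As described, the scheme cannot close: the claimed ladder in the Lebesgue exponent has no mechanism forcing $\tilde q$ to decrease by a definite amount, and nothing in the proposal produces the positive-power low-frequency decay that is actually needed.

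What the paper does instead (and what is missing from your outline) is a frequency-by-frequency self-improving estimate at \emph{low} frequencies: after reducing, via Remark \ref{rem}(ii), to showing $\|u_N\|_{L_t^\infty L_x^q}\lesssim N^{\alpha}$ for all dyadic $N\leq N_0$, one introduces $A_q(N)=N^{\frac{d}{q}-\frac4p}\|u_N\|_{L_t^\infty L_x^q}$ as in \eqref{aqn1}, applies the no-waste Duhamel formula \eqref{nwd} to the single block $P_Nu$ with the $|t-s|\lessgtr N^{-4}$ splitting (the near region is handled by Bernstein and unitarity on $L^2$, not Strichartz), and then decomposes the nonlinearity itself relative to $N$ and $N_0$ — \eqref{fenjie} when $p$ is even, the fundamental-theorem-of-calculus decomposition \eqref{contru1}--\eqref{contru2} with the fractional chain rule and nonlinear Bernstein when it is not — so that the output is a recurrence \eqref{diedai125} in which $A_q(N)$ is bounded by $(N/N_0)^{d-4-\frac4p-\frac{d}{q}}$ plus a \emph{small} multiple $\eta^p$ (coming from the almost-periodicity smallness \eqref{jxsmall}, not merely boundedness) of the $A_q(M)$ at other low frequencies. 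The acausal Gronwall inequality, Lemma \ref{gronwall1}, then converts this recurrence into the decay $A_q(N)\lesssim N^{(d-4-\frac4p-\frac{d}{q})-}$; the constraint $q>\frac{2d}{d-4}$ (and the companion upper bounds on $q$) comes precisely from requiring the exponents in that recurrence to be positive so that Gronwall applies, not from a termination count of an exponent ladder. Your proposal contains several correct ingredients (no-waste Duhamel, the $N^{-4}$ time splitting with the dispersive estimate, the chain-rule lemmas for non-even $p$), but without the recursive frequency decomposition of $f(u)$, the smallness from \eqref{jxsmall}, and the Gronwall step, the argument does not yield the low-frequency decay that the proposition amounts to.
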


\begin{remark}\label{rem}
$(i)$ It is easy to see that we have by Sobolev embedding  and
\eqref{js}
$$u\in L_t^\infty  L_x^\frac{pd}{4}(\R\times\R^d).$$

$(ii)$ \eqref{ad1} can be reduced to show that there exists $\al>0$
and $N_0\in2^{\Z}$ such that for all dyadic number $N\leq N_0$
$$\|u_N\|_{L_t^\infty L_x^q(\R^d)}\lesssim N^{\al},\quad
q\in\Big(\frac{2d}{d-4},\frac{d}4p\Big].$$ In fact, we have by
Bernstein's inequality and \eqref{js}
\begin{align*}
\|u\|_{L_x^q(\R^d)}\lesssim&\sum_{N\leq N_0}\|u_N\|_{L_x^q}+\big\|P_{\geq N_0}u\big\|_{L_x^q}\\
\lesssim&\sum_{N\leq N_0}N^{\al}+\big\||\nabla|^{\frac{d}2-\frac{d}q}P_{\geq N_0}u\big\|_{L_x^2}\\
\lesssim&N_0^{\al}+N_0^{\frac{d}2-\frac{d}q-s_c}\big\||\nabla|^{s_c}u\big\|_{L_x^2}<+\infty.
\end{align*}
\end{remark}

\noindent{\bf The proof of Proposition \ref{adecay1}:} From
\eqref{xiaoc}, we know that $$\|u_{\leq
c(\eta)N(t)}\|_{\dot{H}^{s_c}}\leq\eta.$$ Combining this with
$\inf\limits_{t\in\R}N(t)\geq1$, we deduce that if we take $N_0$
such that $N_0\leq c(\eta)$, then
\begin{equation}\label{jxsmall}
\|u_{\leq N_0}\|_{\dot{H}^{s_c}}\leq\eta.
\end{equation}

Now we define $A_q(N)$ by
\begin{equation}\label{aqn1}
A_q(N)=N^{\frac{d}q-\frac4p}\|u_N\|_{L_t^\infty(\R;L^q_x(\R^d))},~q>\frac{2d}{d-4},
\end{equation}
It is easy to see that $A_q(N)\lesssim1$ by Bernstein's inequality
and \eqref{js}.

We first  consider that $p$ is an even integer.

{\bf Case 1: $p$ even.} We claim that  $A_q(N)$ satisfies the
following recurrence formula
\begin{equation}\label{diedai125}
\begin{split}
 A_q(N)
\lesssim&\Big(\frac{N}{N_0}\Big)^{d-4-\frac4p-\frac{d}q}+\eta^p\sum_{\frac{N}{10p}\leq
M\leq
N_0}\Big(\frac{N}{M}\Big)^{d-4-\frac4p-\frac{d}q}A_q(M)\\
&+\eta^p
\sum_{M\leq\frac{N}{10p}}\Big(\frac{M}{N}\Big)^{-\frac{d}2+4+\frac{d}{q}-}A_q(M)
\end{split}
\end{equation}
for any $q>\frac{2d}{d-4}$. Note that
$d-4-\frac4p-\frac{d}q,~-\frac{d}2+4+\frac{d}{q}->0$ whenever
$q\in\Big(\frac{2d}{d-4},\frac{2d}{d-8}\Big).$

We postpone the proof of this claim.  And we recall a acausal
Gronwall inequality.

\begin{lemma}[Acausal Gronwall inequality \cite{KV}]\label{gronwall1} Given
$\eta,C,\gamma,\gamma'>0$, let $\{x_k\}_{k\geq0}$ be a bounded
nonnegative sequence obeying
\begin{equation}\label{dd}
x_k\leq C2^{-\gamma
k}+\eta\sum_{l=0}^{k-1}2^{-\gamma(k-l)}x_l+\eta\sum_{l\geq
k}2^{-\gamma'(l-k)}x_l
\end{equation}
for all $k\geq0.$ If
$\eta\leq\frac14\min\{1-2^{-\gamma},1-2^{-\gamma'},1-2^{\rho-\gamma}\}$
for some $~0<\rho<\gamma,$ then
\begin{equation}
x_k\leq(4C+\|x\|_{l^\infty})2^{-\rho k}.
\end{equation}
\end{lemma}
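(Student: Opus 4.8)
The plan is to turn the acausal inequality \eqref{dd} into a genuine fixed-point inequality, dominate $\{x_k\}$ pointwise by an explicit Neumann series, and estimate that series in a weighted space. Set $s_k:=C2^{-\gamma k}$ and define the linear operator
\[
(Ly)_k:=\eta\sum_{l=0}^{k-1}2^{-\gamma(k-l)}y_l+\eta\sum_{l\ge k}2^{-\gamma'(l-k)}y_l ,
\]
so that \eqref{dd} reads $x\le s+Lx$ componentwise. What makes the argument work is that $L$ is linear and order preserving on nonnegative sequences (its kernel is nonnegative) and that $s\ge0$; iterating $x\le s+Lx$ then gives $x\le\sum_{j=0}^{n-1}L^j s+L^n x$ for every $n\ge1$.

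First I would check that $L$ is a $\tfrac12$-contraction on $l^\infty$: it is convolution against one-sided kernels $(2^{-\gamma m})_{m\ge1}$ and $(2^{-\gamma'm})_{m\ge0}$, whose $l^1$-norms are $(2^\gamma-1)^{-1}$ and $(1-2^{-\gamma'})^{-1}$, and the hypotheses $\eta\le\tfrac14(1-2^{-\gamma})$, $\eta\le\tfrac14(1-2^{-\gamma'})$ make $\eta$ times each of these at most $\tfrac14$. Hence $\|L^n x\|_{l^\infty}\le 2^{-n}\|x\|_{l^\infty}\to0$ (this is the only place boundedness of $\{x_k\}$ enters), the series $\sum_{j\ge0}L^j s$ converges in $l^\infty$, and, letting $n\to\infty$ in the iterated inequality and discarding the nonnegative remainder $L^n x$, one gets the pointwise bound $x_k\le\sum_{j=0}^{\infty}(L^j s)_k$ for all $k$.

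Then I would estimate the majorant in the weighted space $l^\infty_\rho$ with norm $\|y\|_{l^\infty_\rho}:=\sup_{k\ge0}2^{\rho k}|y_k|$. Extracting a factor $2^{-\rho k}$ from each sum gives
\[
\|Ly\|_{l^\infty_\rho}\le\eta\Big(\tfrac{1}{2^{\gamma-\rho}-1}+\tfrac{1}{1-2^{-(\gamma'+\rho)}}\Big)\|y\|_{l^\infty_\rho},
\]
which is legitimate since $0<\rho<\gamma$ and $\gamma'+\rho>0$; now $\eta\le\tfrac14(1-2^{\rho-\gamma})=\tfrac14(2^{\gamma-\rho}-1)2^{-(\gamma-\rho)}$ controls the first term by $\tfrac14$ and $\eta\le\tfrac14(1-2^{-\gamma'})\le\tfrac14(1-2^{-(\gamma'+\rho)})$ controls the second by $\tfrac14$, so $L$ is a $\tfrac12$-contraction on $l^\infty_\rho$ too. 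Since $\|s\|_{l^\infty_\rho}=C\sup_{k\ge0}2^{(\rho-\gamma)k}=C$, the Neumann series satisfies $\|\sum_{j\ge0}L^j s\|_{l^\infty_\rho}\le 2C$, whence $x_k\le 2C\,2^{-\rho k}\le(4C+\|x\|_{l^\infty})\,2^{-\rho k}$ for all $k$, which is the claim (in fact with the sharper constant $2C$).

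The step I expect to be the real point --- rather than the routine estimates above --- is the choice to dominate $\{x_k\}$ by the explicit series $\sum_j L^j s$ instead of trying to run a contraction/fixed-point argument on $\{x_k\}$ itself. The latter does not work: a priori one only knows $x\in l^\infty$, not $x\in l^\infty_\rho$, and the acausal sum in \eqref{dd} prevents any forward induction from producing decay, so the decay cannot simply be bootstrapped in. The remaining care is purely bookkeeping of the smallness hypothesis: the factors $1-2^{-\gamma}$ and $1-2^{-\gamma'}$ are exactly what make $L$ contract on $l^\infty$ (so the Neumann series converges and $L^n x\to0$), while $1-2^{\rho-\gamma}$ and $1-2^{-\gamma'}$ are exactly what make $L$ contract on $l^\infty_\rho$ (so the majorant can be estimated), and $\tfrac14\min\{\cdots\}$ packages both requirements.
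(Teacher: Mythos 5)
Your argument is correct: the componentwise iteration $x\le\sum_{j<n}L^js+L^nx$, the $l^\infty$ bound $\|L\|\le\tfrac12$ (which kills the remainder $L^nx$ using the boundedness of $x$), and the weighted bound $\|L\|_{l^\infty_\rho\to l^\infty_\rho}\le\tfrac12$ with $\|s\|_{l^\infty_\rho}=C$ all check out, including the bookkeeping of the three smallness conditions on $\eta$. Note the paper does not prove this lemma but cites \cite{KV}, whose argument is essentially the same iteration of the inequality against the $2^{-\rho k}$-weighted kernel bounds; your refinement of summing the full Neumann series (rather than stopping after finitely many iterations, which is what produces the $\|x\|_{l^\infty}$ term in the stated constant) yields the sharper constant $2C$, consistent with, and stronger than, the claimed bound $(4C+\|x\|_{l^\infty})2^{-\rho k}$.
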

Now we use the claim \eqref{diedai125} to prove Proposition
\ref{adecay1} for $p$ being an even integer.  Applying Lemma
\ref{gronwall1} with $x_k=A_q(2^{-k}N_0)$, we obtain by
\eqref{diedai125}
$$x_k\leq
C2^{-k\big(d-4-\frac4p-\frac{d}q\big)}+C\eta^p\sum_{l=0}^k2^{-(k-l)\big(d-4-\frac4p-\frac{d}q\big)}
x_l+C\eta^p\sum_{l>k}2^{-(l-k)\big(-\frac{d}2+4+\frac{d}{q}-\big)}x_l.$$
 Then
$x_k\lesssim 2^{-k\rho},~0<\rho<d-4-\frac2p-\frac{d}q,$ that is,
\begin{equation*}\label{add1}
A_q(N)\lesssim N^{(d-4-\frac4p-\frac{d}q)-},~q\in
\Big(\frac{2d}{d-4},\frac{2d}{d-8}\Big)\end{equation*}which means
for $N\leq N_0$
\begin{equation}\label{didaixh}
\|u_N\|_{L_x^q}\lesssim
N^{\big(d-4-\frac{2d}q\big)-},~q\in\Big(\frac{2d}{d-4},\frac{2d}{d-8}\Big).
\end{equation}
This together with Remark \ref{rem} (ii) yields that $$u\in
L_t^\infty(\R;L^q_x(\R^d)),~q\in\Big(\frac{2d}{d-4},\min\Big\{\frac{2d}{d-8},\frac{dp}4\Big\}\Big).$$
Interpolating this with $u\in L_t^\infty
L_x^\frac{pd}4(\R\times\R^d)$, we conclude Proposition \ref{adecay1}
for $p$ being an even integer.

Therefore, it suffices to prove the claim \eqref{diedai125}. By
time-translation symmetry, we only need to estimate \eqref{aqn1} at
$t=0$. Using No waste Duhamel formula \eqref{nwd}, Bernstein's
inequality, dispersive estimate \eqref{dispers}, we obtain for all
$q>\frac{2d}{d-4}$
\begin{align*}
\|u_n(0)\|_{L_x^q}\leq&\Big\|\int_0^{+\infty} e^{it\Delta^2}P_Nf(u)(t)dt\Big\|_{L_x^q}\\
\lesssim& N^{d\big(\frac12-\frac1q\big)}\int_0^{N^{-4}} \Big\|
e^{it\Delta^2}P_Nf(u)\Big\|_{L_x^2}dt+\int_{N^{-4}}^{+\infty}
t^{-d\big(\frac14-\frac1{2q}\big)}\|P_Nf(u)\|_
{L_x^{q'}}dt\\
\lesssim&N^{d-4-\frac{2d}q}\|P_Nf(u)\|_{L_t^\infty L_x^ {q'}}.
\end{align*}
Thus \begin{equation}\label{dd11} A_q(N)\lesssim
N^{d-4-\frac4p-\frac{d}q}\|P_Nf(u)\|_{L_t^\infty L_x^
{q'}},~q>\frac{2d}{d-4}. \end{equation} Decomposing $u$ by
$$u=u_{>N_0}+u_{\leq
N_0}=u_{>N_0}+u_{\frac{N}{10p}\leq\cdot\leq
N_0}+u_{<\frac{N}{10p}}$$ and using the fact that $p$ is an even
integer, we can write $P_Nf(u)$ by
\begin{equation}\label{fenjie}
P_Nf(u)=P_N\Big[\varnothing\Big(u_{>N_0}\sum_{k=0}^pu_{>N_0}^ku_{\leq
N_0}^{p-k}\Big)+\varnothing\Big(\sum_{k=0}^pu_{<\frac{N}{10p}}^ku_{\frac{N}{10p}\leq\cdot\leq
N_0}^{p+1-k}\Big)\Big].
\end{equation}
Here we use the notation $\varnothing(X)$ to denote a quantity that
resembles $X,$ that is, a finite linear combination of terms that
look like those in $X$, but possibly with some factors replaced by
their complex conjugates and/or restricted to various frequencies.

We first consider the terms which contain at least one factor of
$u_{>N_0}$. By H\"older's inequality,  Bernstein's inequality,
 Sobolev embedding: $
\dot{H}^{s_c}_x(\R^d)\hookrightarrow L^{\frac{pd}4}_x(\R^d)$ and the
assumption \eqref{js}, we get
\begin{align}\nonumber
\big\|P_N\varnothing\big(u_{>N_0}\cdot u^p\big)\big\|_{L_t^\infty
L_x^ {q'}}\lesssim&\|u_{> N_0}\|_{L_t^\infty L_x^r}\|u\|_{L_t^\infty
L_x^\frac{pd}4}^p\\\nonumber
\lesssim&N_0^{-d+4+\frac4p+\frac{d}q}\|u\|_{L_t^\infty
\dot{H}^{s_c}}^{p+1}\\\label{di1}
\lesssim&N_0^{-d+4+\frac4p+\frac{d}q},
\end{align}
where $1-\frac1q=\frac1r+\frac4d$.

To estimate the contribution of the second term on the right-hand
side of \eqref{fenjie} to \eqref{dd11}, we first note that
\begin{align*}
\big\|P_N\varnothing\Big(\sum_{k=0}^pu_{<\frac{N}{10p}}^ku_{\frac{N}{10p}\leq\cdot\leq
N_0}^{p+1-k}\Big)\big\|_{L_t^\infty
L_x^{q'}}\lesssim\big\|\varnothing\big(u_{\frac{N}{10p}\leq\cdot\leq
N_0}^{p+1}\big)\big\|_{L_t^\infty
L_x^{q'}}+\big\|\varnothing\big(u_{<\frac{N}{10p}}^pu_{\frac{N}{10p}\leq\cdot\leq
N_0}\big)\big\|_{L_t^\infty L_x^{q'}}.
\end{align*}
Using H\"older's inequality, Bernstein's inequality, the assumption
\eqref{js} and compactness \eqref{jxsmall}, we estimate
\begin{align}\nonumber
\|\varnothing\big(u_{\frac{N}{10p}\leq\cdot\leq
N_0}^{p+1}\big)\|_{L_t^\infty L_x^
{q'}}\lesssim&\|u_{\frac{N}{10p}\leq\cdot\leq N_0}\|_{L_t^\infty
L_x^ {\frac{d}4p}}^{p-1}\sum_{\frac{N}{10p}\leq M_1\leq M_2\leq
N_0}\|u_{M_1}\|_{L_t^\infty L_x^q}\|u_{M_2}\|_{L_t^\infty
L_x^r}\\\nonumber \lesssim&\eta^{p-1}\sum_{\frac{N}{10p}\leq M_1\leq
M_2\leq N_0}\|u_{M_1}\|_{L_t^\infty
L_x^q}M_2^{-d+4+\frac{2d}{q}}\|u_{\leq N_0}\|_{L_t^\infty
\dot{H}^{s_c}}\\\label{di2} \lesssim&\eta^p
N^{-d+4+\frac4p+\frac{d}q}\sum_{\frac{N}{10p}\leq M\leq
N_0}\Big(\frac{N}{M}\Big)^{d-4-\frac4p-\frac{d}q}A_q(M),
\end{align}
where $1-\frac1q=\frac{4(p-1)}{pd}+\frac1q+\frac1r,$ and we use the
fact $q>\frac{2d}{d-4}$ in the last inequality.

Similarly, we estimate
\begin{align*}
&\big\|\varnothing\big(u_{<\frac{N}{10p}}^pu_{\frac{N}{10p}\leq\cdot\leq
N_0}\big)\big\|_{L_t^\infty L_x^
{q'}}\\
\lesssim&\|u_{\frac{N}{10p}\leq\cdot\leq N_0}\|_{L_t^\infty
L_x^2}\sum_{M_1\leq\cdots\leq
M_p\leq\frac{N}{10p}}\prod_{j=1}^{p-1}\|u_{M_j}\|_{L_{t,x}^\infty}\|u_{M_p}\|_{L_t^\infty
L_x^\frac{2q}{q-2}}\\
\lesssim& \eta^2N^{-s_c}\sum_{M_1\leq\cdots\leq
M_p\leq\frac{N}{10p}}\prod_{j=1}^{p-1}M_j^\frac4pA_q(M_j)M_p^{-\frac{d}2+\frac{d}q+\frac4p}\\
\lesssim&\eta^2 N^{-s_c}\sum_{M_1\leq\cdots\leq
M_{p-1}\leq\frac{N}{10p}}M_1^{\varepsilon(p-1)}M_{p-1}^{-\frac{d}2+\frac{d}q+\frac4p}\\
&\times
\Big(M_1^{(\frac4p-\varepsilon)(p-1)}A_q(M_1)^{p-1}+M_{2}^{\frac{4(p-1)}{p}}A_q(M_{2})^{p-1}
+\cdots+M_{p-1}^{\frac{4(p-1)}{p}}A_q(M_{p-1})^{p-1}\Big)\\
\lesssim&\eta^2N^{-d+4+\frac4p+\frac{d}q}
\sum_{M\leq\frac{N}{10p}}\Big(\frac{M}{N}\Big)^{-\frac{d}2+4+\frac{d}q-}A_q(M)^{p-1}\\
\lesssim&\eta^pN^{-d+4+\frac4p+\frac{d}q}
\sum_{M\leq\frac{N}{10p}}\Big(\frac{M}{N}\Big)^{-\frac{d}2+4+\frac{d}q-}A_q(M),
\end{align*}
where $\varepsilon$ is a sufficiently small positive constant, and
we use $q>\frac{2d}{d-4}$ and $A_q(M)\lesssim\eta$ with $M\leq N_0$
in the above inequality. This together with \eqref{di1}, \eqref{di2}
and \eqref{dd11} imply the claim \eqref{diedai125}. And thus, we
conclude Proposition \ref{adecay1} for $p$ being an even integer.

{\bf Case 2: $p$ not even.} Now we turn to consider that $p$ is not
an even integer and $s_c\in(2,2+p).$

By the same argument as above, we estimate
\begin{equation}\label{dd1123} A_q(N)\lesssim
N^{d-4-\frac4p-\frac{d}q}\|P_Nf(u)\|_{L_t^\infty L_x^
{q'}},~q>\frac{2d}{d-4}. \end{equation} For $N\leq N_0$, using the
fundamental Theorem of Calculus, we decompose $f(u)$ by
\begin{align}
f(u)=&\varnothing\big(u_{>N_0}\cdot u_{\leq
N_0}^p\big)+\varnothing\big(u_{>N_0}^{p+1}\big)+f\big(u_{\frac{N}{10}\leq\cdot\leq
N_0}\big)\\\label{contru1}
&+u_{\leq\frac{N}{10}}\int_0^1f_z\big(u_{\frac{N}{10}\leq\cdot\leq
N_0}+\theta u_{<\frac{N}{10}}\big)d\theta\\\label{contru2}
&+\overline{u_{\leq\frac{N}{10}}}\int_0^1f_{\bar{z}}\big(u_{\frac{N}{10}\leq\cdot\leq
N_0}+\theta u_{<\frac{N}{10}}\big)d\theta.
\end{align}

The contribution to the right-hand side of \eqref{dd1123} coming
from that contain at least one copy of $u_{>N_0}$ can be estimated
by the same argument as \eqref{di1}.

 By a simple computation, we have the following
equivalence for $p$ being not an even integer
\begin{align*}
s_c<2+p~&\Longleftrightarrow~2p^2-(d-4)p+8>0,\\
\begin{cases}
\frac8{d-4}<p\leq1,\\
2<s_c<2+p
\end{cases}&\Longleftrightarrow
\begin{cases}
d=13,~p\leq1<p_1:=\frac{(d-4)-\sqrt{(d-4)^2-64}}4\\
d\geq14,~p<p_1
\end{cases}\\
\begin{cases}
p>\max\{1,\frac8{d-4}\},\\
2<s_c<2+p
\end{cases}
&\Longleftrightarrow~\begin{cases}
9\leq d\leq12:~p>\frac8{d-4},\\
d=13:~1<p<p_1~\text{or}~p>p_2,\\
d\geq14:~p>p_2:=\frac{(d-4)+\sqrt{(d-4)^2-64}}4.
\end{cases}
\end{align*}

Next, we divide two cases to estimate the contribution coming from
the remain terms.

 {\bf Subcase 2(i): $p\leq1$.} In this case we have
only $f_z(u)\in C^p(\C)$.

We first consider the contribution coming from the term
$f\big(u_{\frac{N}{10}\leq\cdot\leq N_0}\big)$. Using $l^p\subset
l^1$, H\"older's inequality, Bernstein's inequality and compactness
\eqref{jxsmall}, we deduce that
\begin{align*}
&\|f(u_{\frac{N}{10}\leq\cdot\leq N_0})\|_{L_t^\infty L_x^
{q'}}\\
\lesssim&\sum_{\frac{N}{10}\leq M_1\leq
N_0}\big\|u_{M_1}\big|u_{\frac{N}{10}\leq\cdot\leq
N_0}\big|^p\big\|_{L_t^\infty L_x^{q'}}\\
\lesssim&\sum_{\frac{N}{10}\leq M_1,M_2\leq
N_0}\big\|u_{M_1}\big|u_{M_2}\big|^p\big\|_{L_t^\infty L_x^{q'}}\\
\lesssim&\sum_{\frac{N}{10}\leq M_1\leq M_2\leq
N_0}\|u_{M_1}\|_{L_t^\infty L_x^q}\|u_{M_2}\|_{L_t^\infty
L_x^\frac{pq}{q-2}}^p\\
&+\sum_{\frac{N}{10}\leq M_2\leq M_1\leq N_0}\|u_{M_1}\|_{L_t^\infty
L_x^\frac{pq}{q-2}}^p\|u_{M_1}\|_{L_t^\infty
L_x^q}^{1-p}\|u_{M_2}\|_{L_t^\infty L_x^q}^p\\
\lesssim&\eta^p\sum_{\frac{N}{10}\leq M\leq
N_0}M^{-d+4+\frac4p+\frac{d}q}A_q(M)\\
&+\eta^p\sum_{\frac{N}{10}\leq M_2\leq M_1\leq
N_0}\Big(\frac{M_2}{M_1}\Big)^{2p(d-4-\frac4p-\frac{d}q)}\big(M_1^{-d+4+\frac4p+\frac{d}q}A_q(M_1)\big)^{1-p}
\big(M_2^{-d+4+\frac4p+\frac{d}q}A_q(M_2)\big)^{p}\\
\lesssim&\eta^pN^{-d+4+\frac4p+\frac{d}q}\sum_{\frac{N}{10}\leq
M\leq N_0}\Big(\frac{N}{M}\Big)^{d-4-\frac4p-\frac{d}q}A_q(M).
\end{align*}

Now we consider the contribution coming from \eqref{contru1} and
\eqref{contru2}. It suffices to consider \eqref{contru1}, since
similar arguments can be used to deal with \eqref{contru2}. By
H\"older's inequality, we obtain
\begin{align}\nonumber
&\Big\|P_N\Big(u_{\leq\frac{N}{10}}\int_0^1f_z\big(u_{\frac{N}{10}\leq\cdot\leq
N_0}+\theta u_{<\frac{N}{10}}\big)d\theta\Big)\Big\|_{L_t^\infty
L_x^ {q'}}\\\nonumber \lesssim&\|u_{<\frac{N}{10}}\|_{L_t^\infty
L_x^r}\Big\|P_N\big(\int_0^1f_z\big(u_{\frac{N}{10}\leq\cdot\leq
N_0}+\theta u_{<\frac{N}{10}}\big)d\theta\big)\Big\|_{L_t^\infty
L_x^ {\frac{d}{p+4}}}\\\label{last}
\lesssim&\|u_{<\frac{N}{10}}\|_{L_t^\infty
L_x^r}\big\|P_N\big(f_z\big(u_{\leq N_0}\big)\big)\big\|_{L_t^\infty
L_x^ {\frac{d}{p+4}}},
\end{align}
where $1-\frac1q=\frac1r+\frac{p+4}d.$ On the other hand, it follows
from the nonlinear Bernstein inequality \eqref{nlbernstein} that
$$\big\|P_N\big(f_z\big(u_{\leq
N_0}\big)\big)\big\|_{L_t^\infty L_x^ {\frac{d}{p+4}}}\lesssim
N^{-p}\big\|\nabla u_{\leq N_0}\big\|_{L_t^\infty L_x^
{\frac{pd}{p+4}}}^p\lesssim N^{-p}\big\||\nabla|^{s_c}u_{\leq
N_0}\big\|_{L_t^\infty L_x^2}^p.$$ Plugging this into \eqref{last},
and by Bernstein's inequality, compactness \eqref{jxsmall} we derive
\begin{align*}
&\Big\|P_N\Big(u_{\leq\frac{N}{10}}\int_0^1f_z\big(u_{\frac{N}{10}\leq\cdot\leq
N_0}+\theta u_{\leq N_0}\big)d\theta\Big)\Big\|_{L_t^\infty L_x^
{q'}}\\
\lesssim& N^{-p}\big\||\nabla|^{s_c}u_{\leq N_0}\big\|_{L_t^\infty
L_x^2}^p\|u_{<\frac{N}{10}}\|_{L_t^\infty L_x^r}\\
\lesssim&\eta^p
N^{-d+4+\frac4p+\frac{d}q}\sum_{M<\frac{N}{10}}\Big(\frac{M}{N}\Big)^{p+4+\frac{d}{q}+\frac4p-d}A_q(M).
\end{align*}

Putting everything together, we deduce that $A_q(N)$ satisfies the
following recurrence formula
\begin{equation}\label{diedai16}
\begin{split}
 A_q(N)
\lesssim&\Big(\frac{N}{N_0}\Big)^{d-4-\frac4p-\frac{d}q}+\eta^p\sum_{\frac{N}{10}\leq
M\leq
N_0}\Big(\frac{N}{M}\Big)^{d-4-\frac4p-\frac{d}q}A_q(M)\\
&+\eta^p
\sum_{M\leq\frac{N}{10}}\Big(\frac{M}{N}\Big)^{-d+4+p+\frac{d}{q}+\frac4p}A_q(M)
\end{split}
\end{equation}
for any $q\in \big(\frac{2d}{d-4},\frac{d}{d-4-p-\frac4p}\big)$.
Applying Lemma \ref{gronwall1} again, we obtain
\begin{equation}
A_q(N)\lesssim N^{(d-4-\frac4p-\frac{d}q)-},~q\in
\Big(\frac{2d}{d-4},\frac{d}{d-4-p-\frac4p}\Big)\end{equation}which
means for $N\leq N_0$
\begin{equation}\label{didaixh}
\|u_N\|_{L_x^q}\lesssim
N^{\big(d-4-\frac{2d}q\big)-},~q\in\Big(\frac{2d}{d-4},\frac{d}{d-4-p-\frac4p}\Big).
\end{equation}
This together with Remark \ref{rem} (ii) yields that $$u\in
L_t^\infty(\R;L^q_x(\R^d)),~q\in\Big(\frac{2d}{d-4},\min\Big\{\frac{d}{d-4-p-\frac4p},\frac{dp}4\Big\}\Big).$$
Interpolating this with $u\in L_t^\infty
L_x^\frac{pd}4(\R\times\R^d)$, we conclude Proposition \ref{adecay1}
for $p\in\big(\frac8{d-4},1\big]$.

{\bf Subcase 2(ii): $p>\max\{1,\frac8{d-4}\}$.}  By the same
argument as \eqref{di2}, we estimate
\begin{align*}
\|f\big(u_{\frac{N}{10p}\leq\cdot\leq N_0}\big)\|_{L_t^\infty L_x^
{q'}}\lesssim\eta^p N^{-d+4+\frac4p+\frac{d}q}\sum_{\frac{N}{10}\leq
M\leq N_0}\Big(\frac{N}{M}\Big)^{d-4-\frac4p-\frac{d}q}A_q(M).
\end{align*}

Next, we consider the contribution coming from \eqref{contru1} and
\eqref{contru2}. It suffices to consider \eqref{contru1}, since
similar arguments can be used to deal with \eqref{contru2}. Given
$p$, there exists $\varepsilon>0$ such that $s_c<2+p-\varepsilon.$
Using the H\"older, Bernstein's inequalities and compactness
\eqref{jxsmall}, we derive that
\begin{align}\label{diff}
&\Big\|P_N\Big(u_{\leq\frac{N}{10}}\int_0^1f_z\big(u_{\frac{N}{10}\leq\cdot\leq
N_0}+\theta u_{<\frac{N}{10}}\big)d\theta\Big)\Big\|_{L_t^\infty
L_x^ {q'}}\\\nonumber
 \lesssim&\|u_{<\frac{N}{10}}\|_{L_t^\infty
L_x^{r_1}}\Big\|P_{>\frac{N}{10}}\big(\int_0^1f_z\big(u_{\frac{N}{10}\leq\cdot\leq
N_0}+\theta u_{<\frac{N}{10}}\big)d\theta\big)\Big\|_{L_t^\infty
L_x^ {r_2}}\\\nonumber
\lesssim&N^{-s_c+2-\varepsilon}\|u_{<\frac{N}{10}}\|_{L_t^\infty
L_x^{r_1}}\big\||\nabla|^{s_c-2+\varepsilon} u_{\leq
N_0}\big\|_{L_t^\infty L_x^\frac{2d}{d-4+2\varepsilon}}\|u_{\leq
N_0}\|_{L_t^\infty L_x^\frac{pd}4}^{p-1}\\\nonumber \lesssim&\eta^p
N^{-d+4+\frac{d}q+\frac4p}\sum_{M\leq\frac{N}{10}}\Big(\frac{M}{N}\Big)^{-\frac{d}2+2+\varepsilon+\frac{d}q}A_q(M),
\end{align}
where $1-\frac1q=\frac1{r_1}+\frac1{r_2}$ and
$\frac1{r_2}=\frac{2d}{d-4+\varepsilon}+\frac{4(p-1)}{pd}.$ Thus, we
derive that $A_q(N)$ satisfies the following recurrence formula
\begin{equation}\label{diedai162}
\begin{split}
 A_q(N)
\lesssim&\Big(\frac{N}{N_0}\Big)^{d-4-\frac4p-\frac{d}q}+\eta^p\sum_{\frac{N}{10}\leq
M\leq
N_0}\Big(\frac{N}{M}\Big)^{d-4-\frac4p-\frac{d}q}A_q(M)\\
&+\eta^p
\sum_{M\leq\frac{N}{10}}\Big(\frac{M}{N}\Big)^{-\frac{d}2+2+\varepsilon+\frac{d}{q}}A_q(M)
\end{split}
\end{equation}
for any $q\in \big(\frac{2d}{d-4},\frac{2d}{d-4-2\varepsilon}\big)$.
Applying Lemma \ref{gronwall1} again, we get
\begin{equation}
A_q(N)\lesssim N^{(d-4-\frac4p-\frac{d}q)-},~q\in
\Big(\frac{2d}{d-4},\frac{2d}{d-4-2\varepsilon}\Big)\end{equation}which
means for $N\leq N_0$
\begin{equation}\label{didaixh}
\|u_N\|_{L_x^q}\lesssim
N^{\big(d-4-\frac{2d}q\big)-},~q\in\Big(\frac{2d}{d-4},\frac{2d}{d-4-2\varepsilon}\Big).
\end{equation}
This together with Remark \ref{rem} (ii) yields that $$u\in
L_t^\infty(\R;L^q_x(\R^d)),~q\in\Big(\frac{d}{d-4-\frac4p},\min\Big\{\frac{2d}{d-4-2\varepsilon},\frac{dp}4\Big\}\Big).$$
Interpolating this with $u\in L_t^\infty
L_x^\frac{pd}4(\R\times\R^d)$, we conclude Proposition \ref{adecay1}
for $p>\max\{1,\frac8{d-4}\}$.

Therefore, we complete the proof of Proposition \ref{adecay1}.

\vskip 0.2in The next result shows the additional decay for the
energy-subcritical and energy-critical cases.
\begin{proposition}[Additional decay II]\label{adecay2}  Let $d\geq9$ and $1\leq s_c\leq 2.$
 And let $u$ be
a global solution to \eqref{equ1.1} that is almost periodic modulo
symmetries. Assume also $\inf\limits_{t\in\R}N(t)\geq1.$ Then we
have
\begin{equation}\label{ad123}
u\in L_t^\infty(\R;L_x^q(\R^d)),\quad q\in \begin{cases}
\Big(r_1,\frac{pd}{4}\Big],~r_1=\frac{2d(\frac8p-d+6+s_c)}{d(\frac8p-d+6)+2s_c(d-5-\frac4p)},~\text{if}~p>1,\\
\Big(r_2,\frac{pd}{4}\Big],~r_2=\frac{2d(2p+\frac8p-d+4+s_c)}{d(2p+\frac8p-d+4)+2s_c(d-4-\frac4p-p)},~\text{if}~p\leq1.
\end{cases}
\end{equation}
In particular, if $s_c=2;$ that is: $p=\frac8{d-4},$  then
\begin{equation}\label{ad1232}
u\in L_t^\infty(\R;L_x^q(\R^d)),\quad q\in \begin{cases}
\Big(\frac{2d}{d-3},\frac{2d}{d-4}\Big],~\text{if}~p>1,~i.e.~d<12,\\
\Big(\frac{2(d+4)}{d},\frac{2d}{d-4}\Big],~\text{if}~p\leq1,~i.e.~d\geq12.
\end{cases}
\end{equation}
\end{proposition}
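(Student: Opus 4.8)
The plan is to follow the scheme of the proof of Proposition~\ref{adecay1}, adapted to the regime $1\le s_c\le2$. Following that scheme I would first fix, as there, a dyadic $N_0\le c(\eta)$ so that $\|u_{\le N_0}(t)\|_{\dot H^{s_c}_x}\le\eta$ for all $t$, which is legitimate because $\inf_{t}N(t)\ge1$ and by \eqref{xiaoc}. The Sobolev embedding $\dot H^{s_c}_x\hookrightarrow L^{pd/4}_x$ together with \eqref{js} already gives $u\in L_t^\infty L_x^{pd/4}(\R\times\R^d)$, which in particular settles the endpoint $q=pd/4$; moreover, since $\tfrac d2-\tfrac dq\le s_c$ for every $q\le pd/4$, the same embedding yields $u_{>N_0}\in L_t^\infty L_x^q$ via $\|u_{>N_0}\|_{\dot H^{d/2-d/q}_x}\le N_0^{\frac d2-\frac dq-s_c}\|u\|_{\dot H^{s_c}_x}$. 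Hence, exactly as in Remark~\ref{rem}(ii), it suffices to prove
\begin{equation*}
\|u_N\|_{L_t^\infty L_x^q(\R\times\R^d)}\lesssim N^{(d-4-\frac{2d}q)-},\qquad\text{for all dyadic }N\le N_0,
\end{equation*}
for $q$ in the claimed interval; equivalently $A_q(N):=N^{\frac dq-\frac4p}\|u_N\|_{L_t^\infty L_x^q}\lesssim N^{(d-4-\frac4p-\frac dq)-}$, where $A_q(N)\lesssim1$ from Bernstein and \eqref{js}.

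To obtain a self-improving inequality for $A_q(N)$ I would argue as in the derivation of the recurrences \eqref{dd11} and \eqref{diedai16}: apply time translation and the No-waste Duhamel formula \eqref{nwd}, then the dispersive estimate \eqref{dispers} and Bernstein to reduce matters to bounding $\|P_N f(u)\|_{L_t^\infty L_x^{q'}}$, and finally expand $f(u)$. Since $s_c\le2$ and $d\ge9$ force $p\le\tfrac85<2$, $p$ is never an even integer here, so, writing $u=u_{>N_0}+u_{N/10\le\cdot\le N_0}+u_{<N/10}$, I would expand $f(u)$ by the fundamental theorem of calculus into: (a) terms carrying a factor $u_{>N_0}$, which contribute $O\big(N_0^{-(d-4-4/p-d/q)}\big)$ exactly as in \eqref{di1}; (b) the purely middle-frequency term $f(u_{N/10\le\cdot\le N_0})$, handled by H\"older, Bernstein and $\|u_{\le N_0}\|_{\dot H^{s_c}}\le\eta$; and (c) the two difference terms $u_{<N/10}\int_0^1 f_z(\cdots)\,d\theta$ and $\overline{u_{<N/10}}\int_0^1 f_{\bar z}(\cdots)\,d\theta$ of \eqref{contru1}--\eqref{contru2}. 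For (c) the gain in the low-frequency factor is extracted, when $p\le1$, from the nonlinear Bernstein inequality (Lemma~\ref{nlbern}) applied to $f_z(u_{\le N_0})\in C^p$, just as in Subcase~2(i) of Proposition~\ref{adecay1}, and, when $p>1$, from the $C^1$-continuity fractional chain rule applied to $f_z\in C^1$ followed by a high-frequency projection producing a factor $N^{-\sigma}$ with $\sigma\in(0,1]$. Collecting (a)--(c) and optimizing over the (finite) H\"older exponents yields, on a nonempty interval of the form $(r_\ast,\cdot)$ with $r_\ast=r_1$ for $p>1$ and $r_\ast=r_2$ for $p\le1$, a recurrence
\begin{equation*}
A_q(N)\lesssim\Big(\tfrac N{N_0}\Big)^{\gamma_1}+\eta^p\sum_{\frac N{10}\le M\le N_0}\Big(\tfrac NM\Big)^{\gamma_1}A_q(M)+\eta^p\sum_{M\le\frac N{10}}\Big(\tfrac MN\Big)^{\gamma_2}A_q(M),\qquad\gamma_1,\gamma_2>0.
\end{equation*}

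With the recurrence in hand, the acausal Gronwall inequality (Lemma~\ref{gronwall1}) applied to $x_k=A_q(2^{-k}N_0)$ gives $A_q(N)\lesssim N^{\gamma_1-}$, i.e.\ $\|u_N\|_{L_x^q}\lesssim N^{(d-4-2d/q)-}$, for $q\in(r_\ast,\cdot)$; summing over dyadic $N\le N_0$ and adding the contribution of $u_{>N_0}$ gives $u\in L_t^\infty L_x^q$ on that interval, and interpolating with $u\in L_t^\infty L_x^{pd/4}$ fills in the range up to and including $q=pd/4$, which is \eqref{ad123}. Specializing to the energy-critical case $s_c=2$, i.e.\ $p=\tfrac8{d-4}$ (so $p>1\Leftrightarrow d<12$ and $p\le1\Leftrightarrow d\ge12$), and evaluating $r_1,r_2$ at this value produces \eqref{ad1232}. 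The step I expect to be the main obstacle is (c): unlike in the energy-supercritical case treated in Proposition~\ref{adecay1}, when $1\le s_c\le2$ there is no spare derivative $|\nabla|^{s_c-2}$ to move onto the nonlinearity, so one must split the $\eta$-small $\dot H^{s_c}$ mass of $u_{\le N_0}$ against the low-frequency factor $u_{<N/10}$, and choose every H\"older exponent finite, in such a way that the acausal gain exponent $\gamma_2$ stays positive on a $q$-interval whose left endpoint drops below $\tfrac{2d}{d-4}$; keeping track of exactly these constraints is what produces the thresholds $r_1$ (for $p>1$) and $r_2$ (for $p\le1$).
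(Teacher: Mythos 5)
There is a genuine gap in your plan, and it sits exactly where the two cases $s_c>2$ and $1\le s_c\le 2$ differ. The recurrence machinery (No-waste Duhamel, dispersive estimate, Bernstein) produces the prefactor $N^{d-4-\frac{2d}{q}}\|P_Nf(u)\|_{L_t^\infty L_x^{q'}}$, i.e.\ the gain exponent $\gamma_1=d-4-\frac4p-\frac dq$ in $A_q(N)$, and this exponent is fixed by scaling: it is positive only for $q>\frac{d}{d-4-\frac4p}$. When $1\le s_c\le 2$ one has $\frac{pd}4\le\frac{d}{d-4-\frac4p}$ (with equality only at $s_c=2$), so on the whole target interval $q\in(r_\ast,\frac{pd}4]$ the exponent $\gamma_1$ is nonpositive and the acausal Gronwall lemma gives nothing; no choice of H\"older exponents in steps (a)--(c) can repair this, because the obstruction is in $\gamma_1$, not in the low-frequency exponent $\gamma_2$. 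So your claim that ``collecting (a)--(c) and optimizing over the H\"older exponents yields'' the recurrence with $\gamma_1,\gamma_2>0$ down to $q=r_1$ (resp.\ $r_2$) is false, and the thresholds $r_1,r_2$ cannot arise that way.

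What the paper actually does is run the recurrence only for \emph{large} $q$, namely $q\in\big(\frac{d}{d-4-\frac4p},\frac{d}{d-5-\frac4p}\big)$ when $p>1$ (estimating the term \eqref{diff} with one full derivative via $P_{>N/10}$ and the $C^1$ chain rule, gaining $N^{-1}$) and $q\in\big(\frac{d}{d-4-\frac4p},\frac{d}{d-4-p-\frac4p}\big)$ when $p\le1$ (nonlinear Bernstein, gaining $N^{-p}$); these ranges lie \emph{above} $\frac{pd}4$. This yields $\|u_N\|_{L_x^{q}}\lesssim N^{(d-4-\frac{2d}q)-}$ at the top endpoint, e.g.\ $\|u_N\|_{L_x^{\frac{d}{d-5-4/p}-}}\lesssim N^{(\frac8p-d+6)-}$ for $p>1$. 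The missing step in your proposal is the interpolation of this bound against the frequency-localized consequence of \eqref{assume1.1}, $\|u_N\|_{L_x^2}\lesssim N^{-s_c}$: choosing the interpolation weight so that the resulting power of $N$ is $0+$ is precisely what produces $r_1$ (and $r_2$ in the case $p\le1$), giving $\|u_N\|_{L_x^{r_i+}}\lesssim N^{0+}$ for $N\le N_0$, hence $u\in L_t^\infty L_x^{r_i+}$ by Remark \ref{rem}(ii), and then interpolation with $u\in L_t^\infty L_x^{\frac{pd}4}$ fills $(r_i,\frac{pd}4]$. Without this $L^2$--interpolation step your argument cannot reach any exponent below $\frac{pd}4$, so the proof as proposed does not close.
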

\begin{remark}
It is easy to check that
$$r_1,r_2<\frac{pd}4,~\text{whenever}~p>\frac8d.$$
\end{remark}
{\bf The proof of Proposition \ref{adecay2}:} Noting that
$\frac{2d}{d-4}\leq \frac{d}{d-4-\frac4p}$ in this case and by the
similar argument as Proposition \ref{adecay1}, we have
 for $N\leq N_0$
\begin{equation}\label{didaixh1}
\|u_N\|_{L_t^\infty L_x^q}\lesssim
N^{\big(d-4-\frac{2d}q\big)-},~q\in\begin{cases}
\Big(\frac{d}{d-4-\frac4p},\frac{d}{d-5-\frac4p}\Big),~\text{if}~p>1,\\
\Big(\frac{d}{d-4-\frac4p},\frac{d}{d-4-p-\frac4p}\Big),~\text{if}~p\leq1.
\end{cases}
\end{equation}
We remark that one estimates the term \eqref{diff} in the case $p>1$
by a different way. Term \eqref{diff} is estimated by
\begin{align*}
&\Big\|P_N\Big(u_{\leq\frac{N}{10}}\int_0^1f_z\big(u_{\frac{N}{10}\leq\cdot\leq
N_0}+\theta u_{<\frac{N}{10}}\big)d\theta\Big)\Big\|_{L_t^\infty
L_x^ {q'}}\\
\lesssim&\|u_{<\frac{N}{10}}\|_{L_t^\infty
L_x^r}\Big\|P_{>\frac{N}{10}}\big(\int_0^1f_z\big(u_{\frac{N}{10}\leq\cdot\leq
N_0}+\theta u_{<\frac{N}{10}}\big)d\theta\big)\Big\|_{L_t^\infty
L_x^ {\frac{d}{5}}}\\
\lesssim&N^{-1}\|u_{<\frac{N}{10}}\|_{L_t^\infty L_x^r}\big\|\nabla
u_{\leq N_0}\big\|_{L_t^\infty L_x^\frac{pd}{p+4}}\|u_{\leq
N_0}\|_{L_t^\infty L_x^\frac{pd}4}^{p-1}\\
\lesssim&\eta^p
N^{-d+4+\frac{d}q+\frac4p}\sum_{M\leq\frac{N}{10}}\Big(\frac{M}{N}\Big)^{-d+5+\frac{d}q+\frac4p}A_q(M).
\end{align*}
Therefore, we get \eqref{didaixh1}.

{\bf Case 1: $p>1$.}  We have by \eqref{didaixh1} with
$q=\frac{d}{d-5-\frac4p}-$
\begin{equation}\label{diwei}
\|u_N\|_{L_t^\infty L_x^{\frac{d}{d-5-\frac4p}-}}\lesssim N^{(\frac8p-d+6)-}.
\end{equation}
On the other hand, from the assumption \eqref{assume1.1}: $u\in
L_t^\infty\dot{H}^{s_c}_x(\R\times\R^d)$, we know that
$$\|u_N\|_{L_t^\infty L_x^2}\lesssim N^{-s_c}.$$
Interpolating this with \eqref{diwei}, we deduce that for $N\leq
N_0$
$$\|u_N\|_{L_t^\infty L_x^{r_1+}}\lesssim N^{0+}.$$
Hence we obtain $u\in L_t^\infty
L_x^{r_1+}(\R\times\R^d)$ by Remark
\ref{rem} (ii). Interpolating this with $u\in L_t^\infty
\dot{H}^{s_c}\subset L_t^\infty L_x^\frac{pd}{4}$ again, we derive
that
$$u\in L_t^\infty(\R;
L_x^q(\R^d)),~q\in\Big(r_1,\frac{pd}{4}\Big],~r_1=\frac{2d(\frac8p-d+6+s_c)}{d(\frac8p-d+6)+2s_c(d-5-\frac4p)}.$$

{\bf Case 2: $p\leq1$.} By \eqref{didaixh1}, we get
\begin{equation} \label{gaowei} \|u_N\|_{L_t^\infty
L_x^{\frac{d}{d-4-p-\frac4p}-}}\lesssim N^{(2p+\frac8p-d+4)}.
\end{equation}
On the other hand, from the assumption \eqref{assume1.1}: $u\in
L_t^\infty\dot{H}^{s_c}_x(\R\times\R^d)$, we know that
$$\|u_N\|_{L_t^\infty L_x^2}\lesssim N^{-s_c}.$$
Combining this with \eqref{gaowei}, we obtain  for $N\leq N_0$
$$\|u_N\|_{L_t^\infty L_x^{r_2+}}\lesssim
N^{0+}.$$ This implies $u\in L_t^\infty
L_x^{r_2+}$ by Remark \ref{rem} (ii).
Interpolating this with $u\in L_t^\infty L_x^\frac{pd}{4}$ concludes
the proof of this proposition.

\subsection{Negative regularity}

Now we utilize the double Duhamel trick to show Theorem
\ref{regular}. First, we drive a preliminary lemma.

\begin{lemma}\label{lregd1}
 Suppose that $u\in L_t^\infty(\R;\dot{H}^{s}_x(\R^d))$ for some $s\in[0, s_c]$. Assume also that there exists a positive constant
 $\al$ independent of $s$ such that
 \begin{equation}\label{lregd11}
 \big\||\nabla|^{s}P_Nu\big\|_{L_t^\infty
 L_x^2}\lesssim_{s}N^\al,~\forall~N\leq1.
 \end{equation}
 Then, for any $\beta\in[0,\al)$, we have $u\in L_t^\infty
 (\R;\dot{H}^{s-\beta}_x(\R^d))$.
 \end{lemma}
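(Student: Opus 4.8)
The plan is to prove the bound directly by a Littlewood--Paley decomposition, handling high and low frequencies separately and invoking the hypothesis \eqref{lregd11} only in the low-frequency regime. For a fixed time $t$ one has the square-function identity
\[
\|u(t)\|_{\dot H^{s-\beta}_x(\R^d)}^2\sim\sum_{N\in2^{\mathbb{Z}}}N^{2(s-\beta)}\big\|P_Nu(t)\big\|_{L_x^2(\R^d)}^2,
\]
so it suffices to bound the right-hand side by a constant that is uniform in $t$.

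First I would dispose of the high frequencies $N>1$: since $\beta\ge0$ we have $N^{2(s-\beta)}\le N^{2s}$ there, whence
\[
\sum_{N>1}N^{2(s-\beta)}\big\|P_Nu(t)\big\|_{L_x^2}^2\le\sum_{N>1}N^{2s}\big\|P_Nu(t)\big\|_{L_x^2}^2\lesssim\|u(t)\|_{\dot H^{s}_x}^2\le\|u\|_{L_t^\infty(\R;\dot H^{s}_x)}^2,
\]
which is finite by assumption. For the low frequencies $N\le1$ I would first convert \eqref{lregd11} into a bound on $\|P_Nu\|_{L_x^2}$ itself: the Bernstein estimates give $\big\||\nabla|^sP_Nf\big\|_{L_x^2}\sim N^s\|P_Nf\|_{L_x^2}$, so \eqref{lregd11} yields $\|P_Nu(t)\|_{L_x^2}\le\big\|P_Nu\big\|_{L_t^\infty L_x^2}\lesssim_s N^{\al-s}$ for $N\le1$. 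Substituting,
\[
\sum_{N\le1}N^{2(s-\beta)}\big\|P_Nu(t)\big\|_{L_x^2}^2\lesssim_s\sum_{N\le1}N^{2(s-\beta)}N^{2(\al-s)}=\sum_{N\le1}N^{2(\al-\beta)},
\]
and since $\beta<\al$ the exponent $2(\al-\beta)$ is strictly positive, so this dyadic sum is a convergent geometric series with a bound depending only on $\al$ and $\beta$. Combining the two regimes gives $\|u(t)\|_{\dot H^{s-\beta}_x}\lesssim_{s,\al,\beta}1$ for every $t\in\R$, which is exactly $u\in L_t^\infty(\R;\dot H^{s-\beta}_x(\R^d))$.

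I do not expect a genuine obstacle here; this is the routine ``gain of negative regularity by summing a geometric series'' mechanism. The only points needing attention are the strict inequality $\beta<\al$, which is precisely what makes the low-frequency sum summable (at $\beta=\al$ one would lose a logarithm and the argument would fail), and the bookkeeping needed to keep every estimate independent of $t$; the latter is automatic because \eqref{lregd11} is already stated as an $L_t^\infty L_x^2$ bound and the high-frequency contribution is controlled by $\|u\|_{L_t^\infty\dot H^s_x}$, both uniform in $t$. It is also worth noting that the gain $\al$ does not degrade, which is what allows the lemma to be applied iteratively in the proof of Theorem \ref{regular}.
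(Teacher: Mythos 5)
Your proof is correct and follows essentially the same route as the paper: split at frequency $N=1$, control the high frequencies by the assumed $L_t^\infty\dot H^s_x$ bound (using $\beta\ge0$), and for $N\le1$ combine \eqref{lregd11} with Bernstein to produce the factor $N^{\al-\beta}$, which sums geometrically precisely because $\beta<\al$. The only cosmetic difference is that you phrase the $\dot H^{s-\beta}$ norm via the square-function ($\ell^2$) identity while the paper simply applies the triangle inequality over the low-frequency blocks; both are equally valid here.
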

\begin{proof}
Using Bernstein's inequality and the assumption $u\in
L_t^\infty(\R;\dot{H}^{s}_x(\R^d))$, we have
\begin{align*}
\big\||\nabla|^{s-\beta}u\big\|_{L_t^\infty
L_x^2}\lesssim&\sum_{N\leq
1}\big\||\nabla|^{s-\beta}P_Nu\big\|_{L_t^\infty
L_x^2}+\big\||\nabla|^{s-\beta}P_{\geq1}u\big\|_{L_t^\infty L_x^2}\\
\lesssim&\sum_{N\leq 1}
N^{-\beta}\big\||\nabla|^{s}P_Nu\big\|_{L_t^\infty
L_x^2}+\big\||\nabla|^{s}u\big\|_{L_t^\infty L_x^2}\\
\lesssim&\sum_{N\leq 1} N^{\al-\beta}+1<+\infty.
\end{align*}
This completes the proof of Lemma \ref{lregd1}.
\end{proof}

\noindent{\bf The proof of Theorem \ref{regular}:} From Lemma
\ref{lregd1}, we know that the proof of Theorem \ref{regular} is
reduced to show that for any $s\in[0, s_c]$, there exists a positive
constant $\al$ independent of $s$ such that
\begin{equation}\label{guijie}
\big\||\nabla|^{s}u_N\big\|_{L_t^\infty(\R;L_x^2)}\lesssim
N^\al.\end{equation} Indeed, we first apply \eqref{guijie} with
$s=s_c$. Then we conclude that $u\in
L_t^\infty(\R;\dot{H}_x^{s_c-\al+})$ by Lemma \ref{lregd1}. And then
we apply \eqref{guijie} with $s=s_c-\al+$ and obtain $u\in
L_t^\infty(\R;\dot{H}_x^{s_c-2\al+})$. Iterating this procedure
finitely many times, we derive $u\in
L_t^\infty(\R;\dot{H}_x^{-\varepsilon})$ for any
$0<\varepsilon<\al$.

Hence it remains to prove the claim \eqref{guijie}. We divide two
cases to discuss. First, we consider the energy-supercritical case.

{\bf Case 1: $s_c>2$ (energy-supercritical).} Assume that $u\in
L_t^\infty(\R;\dot{H}^s_x(\R^d))$ for some $0\leq s\leq s_c$. It
follows from the additional decay (Proposition \ref{adecay1}) that
\begin{equation}
u\in L_t^\infty(\R;L_x^q(\R^d)),\quad
q\in\Big(\frac{2d}{d-4},\frac{d}4p\Big].
\end{equation}
And so, we obtain by \eqref{fxxqd}
\begin{equation}\label{lregd21}
\big\||\nabla|^sf(u)\big\|_{L_t^\infty
L_x^r}\lesssim\big\||\nabla|^su\big\|_{L_t^\infty
L_x^2}\|u\|_{L_t^\infty L_x^q}^p\lesssim1,\quad
r=\frac{2q}{q+2p}<2.\end{equation} Then the condition $r\geq1$
requires $q\geq2p$.

It follows from No waste Duhamel formula \eqref{nwd} that
\begin{align*}
u(0)=&-\int_0^\infty e^{it\Delta^2}f(u)(t)dt=\int_{-\infty}^0
e^{i\tau\Delta^2}f(u)(\tau)d\tau.
\end{align*}
And so
\begin{align}\nonumber
\big\||\nabla|^su_N(0)\big\|_2^2 =&-\Big\langle\int_0^\infty
e^{it\Delta^2}|\nabla|^sP_Nf(u)(t)dt,\int_{-\infty}^0
e^{i\tau\Delta^2} |\nabla|^sP_Nf(u)(\tau)d\tau\Big\rangle\\\nonumber
=&-\int_0^\infty\int_{-\infty}^0\Big\langle
e^{i(t-\tau)\Delta^2}|\nabla|^sP_Nf(u)(t),
|\nabla|^sP_Nf(u)(\tau)\Big\rangle d\tau dt\\\label{jidi1}
\triangleq&\int_0^\infty\int_{-\infty}^0 F(t,\tau)d\tau dt,
\end{align}
where
$$F(t,\tau)=-\Big\langle e^{i(t-\tau)\Delta^2}|\nabla|^sP_Nf(u)(t),
|\nabla|^sP_Nf(u)(\tau)\Big\rangle.$$ On one hand, using the H\"older,
Bernstein inequalities and  \eqref{lregd21}, we get
\begin{align}\nonumber
F(t,\tau)\leq&\big\|e^{i(t-\tau)\Delta^2}|\nabla|^{s}P_Nf(u)\big\|_{L_x^2}\big\||\nabla|^sP_Nf(u)\big\|_{L_x^2}\\\nonumber
\lesssim&N^{2d\big(\frac1r-\frac12\big)}\big\||\nabla|^sP_Nf(u)\big\|_{L_x^r}^2\\\label{jidi2}
\lesssim&N^{2d\big(\frac1r-\frac12\big)}.
\end{align}
On the other hand, by H\"older's inequality and dispersive estimate \eqref{dispers}, we derive
\begin{align}\nonumber
F(t,\tau)\leq&\big\|e^{i(t-\tau)\Delta^2}|\nabla|^{s}P_Nf(u)\big\|_{L_x^{r'}}\big\||\nabla|^sP_Nf(u)\big\|_{L_x^r}\\\nonumber
\lesssim&(t-\tau)^{-d\big(\frac14-\frac1{2r'}\big)}\big\|||\nabla|^{s}P_Nf(u)\big\|_{L_x^r}^2\\\label{jidi3}
\lesssim&(t-\tau)^{-d\big(\frac1{2r}-\frac14\big)}.
\end{align}
Hence, plugging \eqref{jidi2} and \eqref{jidi3} into \eqref{jidi1},
we obtain
\begin{align}\nonumber
\big\||\nabla|^su_N(0)\big\|_2^2
\lesssim&\int_0^\infty\int_{-\infty}^0
\min\Big\{N^{2d\big(\frac1r-\frac12\big)},(t-\tau)^{-d\big(\frac1{2r}-\frac14\big)}\Big\}d\tau
dt\\\nonumber
\lesssim&\frac14\int_{\R}\int_{\R}\min\Big\{N^{2d\big(\frac1r-\frac12\big)},(|t|+|\tau|)^{-d\big(\frac1{2r}-\frac14\big)}\Big\}d\tau
dt\\\label{zhib} \lesssim&N^{\frac{2pd}q}\iint_{|t|+|\tau|\leq
N^{-4}}d\tau dt+\iint_{|t|+|\tau|\geq N^{-4}}
(|t|+|\tau|)^{-\frac{pd}{2q}} dtd\tau\\\nonumber \lesssim&
N^{-8+\frac{2pd}q},
\end{align}
where  $r=\frac{2q}{q+2p}$ and we also need the restriction
$\frac{pd}{2q}>2$ to guarantee the above integral converges.
Therefore,
\begin{equation}\label{jidi}
\big\||\nabla|^{s}u_N(0)\big\|_2\lesssim N^{-4+\frac{pd}q},\quad
q\in
\Big[2p,\frac{pd}{4}\Big)\bigcap\Big(\frac{2d}{d-4},\frac{pd}{4}\Big].
\end{equation}
The condition $2p<\frac{pd}{4}$ requires the dimension $d$ such that
$d\geq9.$ Now if we take $q=\max\big\{2p,\frac{2d}{d-4}+\big\}$,
then we obtain
$$\al=-4+\frac{pd}q=\min\Big\{-4+\frac{d}2,-4+\frac{(d-4)p}2-\Big\}>0.$$ Therefore we conclude
\eqref{guijie}. And so we complete the proof of Theorem
\ref{regular} for $s_c>2$.

We remark that the balance between the bounds provided by Lemma
\ref{adecay1} and the bound required by Theorem \ref{regular} is the
source of our restriction to dimensions $d\geq9.$ As we noted above,
\eqref{lregd21} provides the $L_t^\infty L_x^q$ bounds for
$q\geq2p,$ while  \eqref{zhib} requires this bound with
$q<\frac{pd}4$. These conditions on $q$ impose the restriction
$d\geq9.$

{\bf Case 2: $1\leq s_c\leq2$ (energy-subcritical and
energy-critical).} By the same argument as the energy-supercritical
case, we have
\begin{equation*}
\big\||\nabla|^{s}u_N(0)\big\|_2\lesssim N^{-4+\frac{pd}{q}},
\end{equation*}
where $q$ satisfies
\begin{equation}
q\in \Big[2p,\frac{pd}{4}\Big)\bigcap\begin{cases}
\Big(r_1,\frac{pd}{4}\Big],~\text{if}~p>1,\\
\Big(r_2,\frac{pd}{4}\Big],~\text{if}~p\leq1.
\end{cases}
\end{equation}
where $(r_1,r_2)$ is as in Proposition \ref{adecay2}. If we take $q=\frac{pd}{4}-,$ then we obtain
$\al=-4+\frac{pd}{q}=0+>0.$ Hence we get \eqref{guijie}. Therefore,
we complete the proof of Theorem \ref{regular} for $s_c\in[1,2]$.
Therefore, we conclude Theorem \ref{theorem}.

\section{Appendix}
In this appendix, we show the perturbation theory. We first consider
that $p$ is an even integer.
\subsection{Perturbation I: $p$ even} Here we give  a perturbation result under the weakest assumption on the
difference of the initial data  \eqref{eq2.222}, since it is easy to
show the smallness assumption \eqref{eq2.222} can be derived from
\eqref{eq2.2221} by Strichartz estimate.
\begin{lemma}[Perturbation Lemma, $p$ even]\label{pertu1}  Assume that $s_c=\frac{d}2-\frac4p\geq1,$ and $p$ is an even integer. Let $I$ be
a compact time interval and $u,~\tilde{u}$ satisfy
\begin{align*}
(i\partial_t+\Delta^2)u=&-f(u)+eq(u)\\
(i\partial_t+\Delta^2)\tilde{u}=&-f(\tilde{u})+eq(\tilde{u})
\end{align*}
for some function $eq(u),eq(\tilde{u})$, and $f(u)=|u|^pu$. Assume
that for some constants $M,E>0$, we have
\begin{align}\label{eq2.20}
\|u\|_{L_t^\infty(I;
\dot{H}^{s_c}_x(\R^d))}+\|\tilde{u}\|_{L_t^\infty(I;
\dot{H}^{s_c}_x(\R^d))}\leq E,\\\label{equ2.201} S_I(\tilde{u})\leq
M,
\end{align}
 Let $t_0\in I$, and let $u(t_0)$ be
close to $\tilde{u}(t_0)$ in the sense that
\begin{equation} \label{eq2.222}
\big\||\nabla|^{s_c-1}e^{i(t-t_0)\Delta^2}(u-\tilde{u})(t_0)\big\|_{L_t^{2(p+1)}L_x^\frac{2d(p+1)}{(d-2)(p+1)-4}(I\times\R^d)}\leq\varepsilon,
\end{equation}
where $0<\varepsilon<\varepsilon_1=\varepsilon_1( M, E)$ is a small
constant. Assume also that we have smallness conditions
\begin{equation}\label{equ2.21}
\big\||\nabla|^{s_c-1}\big(eq(u),eq(\tilde{u})\big)\big\|_{L_t^2L_x^\frac{2d}{d+2}(I\times\R^d)}\leq\varepsilon,
\end{equation}
where $\varepsilon$ is as above.

 Then
we conclude that
\begin{equation}\label{eq2.232}
\begin{split}
S_I(u-\tilde{u})\leq & C(M,E)\varepsilon\\
\big\||\nabla|^{s_c}(u-\tilde{u})\big\|_{S^0(I)}\leq &
C(M,E)\varepsilon\\
\big\||\nabla|^{s_c}u\big\|_{S^0(I)}\leq & C(M,E).
\end{split}
\end{equation}
\end{lemma}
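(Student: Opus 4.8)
The plan is to run the standard ``cut $I$ into good subintervals and bootstrap'' scheme for perturbation lemmas (as in \cite{TVZ2007,MiaoXuZhao}). The even case is the most transparent one: since $p$ is an even integer, $f(u)=|u|^pu$ is a polynomial in $u$ and $\bar u$ of total degree $p+1$, hence smooth, and every difference $f(\tilde u+w)-f(\tilde u)$ expands as a finite sum $\varnothing\big(\sum_{k=1}^{p+1}w^{k}\,\tilde u^{\,p+1-k}\big)$ of monomials of degree $p+1$, each carrying at least one factor of $w$ (or $\bar w$). So the Leibniz rule of Corollary \ref{fscqdfz} applies with $s_c$ (or $s_c-1$) derivatives without any H\"older-continuity subtleties, and the $\varepsilon$-dependence comes out linear, as recorded in \eqref{eq2.232}.

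\textbf{Step 1: a priori control of $\tilde u$.} I would first upgrade \eqref{eq2.20}--\eqref{equ2.201} to $\|\tilde u\|_{Z(I)}+\big\||\nabla|^{s_c}\tilde u\big\|_{S^0(I)}\le C_0(M,E)$. Partition $I$ into $J=J(M)$ consecutive intervals $I_j$ on which the scattering-type norm of $\tilde u$ is below a fixed absolute constant $\delta_0$ (possible since the relevant time exponent is finite and $S_I(\tilde u)\le M$). On each $I_j$ the Duhamel formula, the Strichartz estimate \eqref{equ26}, Corollary \ref{fscqdfz} and the smallness \eqref{equ2.21} of $eq(\tilde u)$ close an estimate $\big\||\nabla|^{s_c}\tilde u\big\|_{S^0(I_j)}\lesssim\|\tilde u(t_j)\|_{\dot H^{s_c}}+\delta_0\lesssim E$; note that here, unlike in the difference argument below, the endpoint value $\|\tilde u(t_j)\|_{\dot H^{s_c}}$ is uniformly $\le E$ by hypothesis, so no inflation of constants occurs. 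Summing the $J$ pieces gives $C_0(M,E)$ (and likewise $\|\tilde u\|_{Z(I)}\le C_0(M,E)$).

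\textbf{Steps 2--3: the difference and the iteration.} Set $w:=u-\tilde u$, which solves
\[
(i\partial_t+\Delta^2)w=-\big(f(\tilde u+w)-f(\tilde u)\big)+eq(u)-eq(\tilde u).
\]
Using the polynomial expansion, the product rule \eqref{fxxqd}, Sobolev embedding and H\"older in the Strichartz spaces, on every interval $J'\subset I$ the nonlinear term obeys a bound of the schematic form
\[
\big\||\nabla|^{s_c-1}\big(f(\tilde u+w)-f(\tilde u)\big)\big\|_{L_t^2L_x^{\frac{2d}{d+2}}(J')}\lesssim_{E}\big(\|w\|_{Z(J')}+\big\||\nabla|^{s_c}w\big\|_{S^0(J')}\big)\Big(\|w\|_{Z(J')}+\|\tilde u\|_{Z(J')}\Big)^{p},
\]
plus an entirely analogous term, carrying a harmless factor $\|\tilde u\|_{Z(J')}$, when the derivative falls on a factor of $\tilde u$. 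Using Step 1, partition $I$ into $L=L(M,E)$ consecutive intervals $I_\ell=[t_\ell,t_{\ell+1}]$ with $\|\tilde u\|_{Z(I_\ell)}\le\delta$ for a small $\delta=\delta(d,p)$, set $\gamma_\ell:=\big\||\nabla|^{s_c-1}e^{i(t-t_\ell)\Delta^2}w(t_\ell)\big\|_{Z(I_\ell)}$ (so $\gamma_1\le\varepsilon$ by \eqref{eq2.222}), and run a standard continuity argument on $I_\ell$: provided $\gamma_\ell,\varepsilon,\delta$ are small enough, the displayed nonlinear bound and \eqref{equ2.21} let Strichartz close, giving
\[
S_{I_\ell}(w)+\|w\|_{Z(I_\ell)}+\big\||\nabla|^{s_c}w\big\|_{S^0(I_\ell)}\le C_1(M,E)\big(\gamma_\ell+\varepsilon\big)\quad\text{and}\quad\gamma_{\ell+1}\le C_1(M,E)\big(\gamma_\ell+\varepsilon\big).
\]
Since there are only $L(M,E)$ steps, iterating this recursion yields $\gamma_\ell\le C(M,E)\,\varepsilon$ for every $\ell$; here is precisely where $\varepsilon_1(M,E)$ must be chosen small (roughly $\varepsilon_1\sim(2C_1)^{-L}$) so that the smallness hypotheses of the continuity argument hold at every stage.

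\textbf{Step 4, and the main difficulty.} Summing the finitely many subinterval bounds gives the first two lines of \eqref{eq2.232} (after noting $\|w\|_{S^0(I)}\lesssim\sum_\ell\|w\|_{S^0(I_\ell)}$ and similarly for $S_I$ and the $\||\nabla|^{s_c}\cdot\|_{S^0}$-norm), and the third line follows from $\big\||\nabla|^{s_c}u\big\|_{S^0(I)}\le\big\||\nabla|^{s_c}\tilde u\big\|_{S^0(I)}+\big\||\nabla|^{s_c}w\big\|_{S^0(I)}\le C_0(M,E)+C(M,E)\varepsilon$. The only genuinely delicate point is Step 3: each good subinterval inflates the error by the fixed factor $C_1(M,E)$, and since the number of subintervals is itself a function of $M$ and $E$, one must track the constants carefully so that every invocation of the local/continuity argument stays inside its smallness regime — the standard bookkeeping burden of perturbation lemmas. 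Everything else (the Strichartz estimates, the Leibniz rule for the polynomial nonlinearity, and checking that all the Strichartz pairs used are admissible and that the H\"older exponents close) is routine.
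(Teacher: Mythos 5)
Your proposal is correct and follows essentially the same route as the paper: an a priori Strichartz bound on $\tilde u$ via subdivision of $I$ using $S_I(\tilde u)\le M$, then a partition into finitely many subintervals on which $\|\tilde u\|_{Z}$ is small, tracking the free evolution of the difference at the subinterval endpoints, and iterating the resulting recursion (with the exponential loss in the number of intervals absorbed by choosing $\varepsilon_1(M,E)$ small), before recovering the full critical-norm Strichartz bounds. The only cosmetic difference is that the paper closes the iteration purely in the $Z$-norm via the bound $\sum_{k=1}^{p+1}\|\gamma\|_{Z(I_j)}^k\|\tilde u\|_{Z(I_j)}^{p+1-k}$, whereas you also carry $\||\nabla|^{s_c}w\|_{S^0}$ through the bootstrap; both versions close.
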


\begin{proof}
Since $S_I(\tilde{u})\leq M$, we may subdivide $I$ into $C(M,
\varepsilon_0)$ time intervals $I_j$ such that
$$S_{I_j}(\tilde{u})\leq \epsilon_0\ll1, \quad \quad 1\le j\le C(M,\varepsilon_0). $$
 By the Strichartz estimate and standard bootstrap argument we
have
\begin{equation*}\label{}
\big\||\nabla|^{s_c}\tilde{u}\big\|_{S^0(I_j)}\leq C(E), \qquad 1\le
j\le C(M,\varepsilon_0).
\end{equation*}
Summing up over all the intervals, we obtain that
\begin{equation}\label{eq2.24}
\big\||\nabla|^{s_c}\tilde{u}\big\|_{S^0(I)}\leq C(E, M).
\end{equation}
In particular, we have by Sobolev embedding
\begin{equation}\label{eq2.25}
\|\tilde{u}\|_{Z(I)}:=\big\||\nabla|^{s_c-1}\tilde{u}\big\|_{L_t^{2(p+1)}L_x^\frac{2d(p+1)}{(d-2)(p+1)-4}(I\times\R^d)}\leq
C(E, M),
\end{equation}
which implies that there exists a partition of the right half of $I$
at $t_0$:
$$t_0<t_1<\cdots<t_N,~I_j=(t_j,t_{j+1}),~I\cap(t_0,\infty)=(t_0,t_N),$$
such that $N\leq C(L,\delta)$ and for any $j=0,1,\cdots,N-1,$ we
have
\begin{equation}\label{ome}
\|\tilde{u}\|_{Z(I_j)}\leq\delta\ll1.
\end{equation}
The estimate on the left half of $I$ at $t_0$ is analogue, we omit
it.

Let
\begin{equation}
\gamma(t)=u(t)-\tilde{u}(t),
\end{equation}
and \begin{equation} \gamma_j(t)
=e^{i(t-t_j)\Delta^2}\gamma(t_j),~0\leq j\leq N-1,
\end{equation} then $\gamma$ satisfies the following
difference equation
\begin{align*}
\begin{cases}
(i\partial_t+\Delta^2){\gamma}=-f(\tilde{u}+\gamma)+f(\tilde{u})+eq(u)-eq(\tilde{u}),\\
{\gamma}(t_j)={\gamma}_j(t_j),
\end{cases}
\end{align*}
which implies that
\begin{align*}
\gamma(t)=&\gamma_j(t)-i\int_{t_j}^t e^{i(t-s)\Delta^2}\big(-f(\tilde{u}+\gamma)+f(\tilde{u})+eq(u)-eq(\tilde{u})\big)ds,\\
\gamma_{j+1}(t)
=&\gamma_j(t)-i\int_{t_j}^{t_{j+1}}e^{i(t-s)\Delta^2}\big(-f(\tilde{u}+\gamma)+f(\tilde{u})+eq(u)-eq(\tilde{u})\big)ds.
\end{align*}
It follows from Strichartz estimate  and nonlinear estimate
\eqref{fxxqd} that
\begin{align}\label{equ3}
&\|\gamma-\gamma_j\|_{Z(I_j)}+\|\gamma_{j+1}-\gamma_j\|_{Z(I)}\\\nonumber
\lesssim&\big\||\nabla|^{s_c-1}\big(f(\tilde{u}-\gamma)+f(\tilde{u})\big)\big\|_{L_t^2L_x^\frac{2d}{d+2}(I\times\R^d)}
+\big\||\nabla|^{s_c-1}\big(eq(u),eq(\tilde{u})\big)\big\|_{L_t^2L_x^\frac{2d}{d+2}(I\times\R^d)}\\\nonumber
\lesssim&\sum_{k=1}^{p+1}\|\gamma\|_{Z(I_j)}^k\|\tilde{u}\|_{Z(I_j)}^{p+1-k}
+\big\||\nabla|^{s_c-1}\big(eq(u),eq(\tilde{u})\big)\big\|_{L_t^2L_x^\frac{2d}{d+2}(I\times\R^d)}\\\nonumber
\lesssim&\sum_{k=1}^{p+1}\|\gamma\|_{Z(I_j)}^k\|\tilde{u}\|_{Z(I_j)}^{p+1-k}+\varepsilon.
\end{align}
Therefore, assuming that
\begin{equation}\label{laodong}
\|\gamma\|_{Z(I_j)}\leq\delta\ll1,~\forall~j=0,1,\cdots,N-1,
\end{equation}
then by \eqref{ome} and \eqref{equ3}, we have
\begin{equation}
\|\gamma\|_{Z(I_j)}+\|\gamma_{j+1}\|_{Z(t_{j+1},t_N)}\leq
C\|\gamma_j\|_{Z(t_j,t_N)}+\varepsilon,
\end{equation}
for some absolute constant $C>0$. By \eqref{eq2.222} and iteration
on $j$, we obtain
\begin{equation}
\|\gamma\|_{Z(I)}\leq (2C)^N\varepsilon\leq\frac{\delta}2,
\end{equation}
if we choose $\varepsilon_1$ sufficiently small. Hence the
assumption \eqref{laodong} is justified by continuity in $t$ and
induction on $j$. Then repeating the estimate \eqref{equ3} once
again, we can get the critical-norm estimate on $\gamma$, which
implies the Strichartz estimates on $u$. This concludes the proof of
this lemma.
\end{proof}

\subsection{Perturbation II: $p$ not even}
In this subsection, we will establish  the perturbation theory of
the solution of \eqref{equ1.1} with $p$ being not an even integer.
We restate the perturbation lemma as follows.
\begin{lemma}[Perturbation Lemma, $p$ not even]\label{longpert1}  Assume that $p$ is not an
even integer and $1\leq s_c<2+p.$ Let $I$ be a compact time interval
and $u,~\tilde{u}$ satisfy
\begin{align*}
(i\partial_t+\Delta^2)u=&-f(u)+eq(u)\\
(i\partial_t+\Delta^2)\tilde{u}=&-f(\tilde{u})+eq(\tilde{u})
\end{align*}
for some function $eq(u),eq(\tilde{u})$, and $f(u)=|u|^pu$. Assume
that for some constants $M,E>0$, we have
\begin{align}\label{eq2.2230}
\|u\|_{L_t^\infty(I;
\dot{H}^{s_c}_x(\R^d))}+\|\tilde{u}\|_{L_t^\infty(I;
\dot{H}^{s_c}_x(\R^d))}\leq E,\\\label{equ2.20321}
S_I(\tilde{u})\leq M,
\end{align}
 Let $t_0\in I$, and let $u(t_0)$ be
close to $\tilde{u}$ in the sense that
\begin{equation} \label{eq2.22212}
\big\|u(t_0)-\tilde{u}(t_0)\big\|_{\dot{H}^{s_c}}\leq\varepsilon,
\end{equation}
where $0<\varepsilon<\varepsilon_1( M, E)$ is a small constant.
Assume also that we have smallness conditions
\begin{equation}\label{equ.1}
\big\||\nabla|^{s_c-1}\big(eq(u),eq(\tilde{u})\big)\big\|_{L_t^2L_x^\frac{2d}{d+2}(I\times\R^d)}\leq\varepsilon,
\end{equation}
where $\varepsilon$ is as above.

 Then
we conclude that
\begin{equation}\label{eq2.232423}
\begin{split}
S_I(u-\tilde{u})\leq & C(M,E)\varepsilon^{c_1}\\
\big\||\nabla|^{s_c}(u-\tilde{u})\big\|_{S^0(I)}\leq &
C(M,E)\varepsilon^{c_2}\\
\big\||\nabla|^{s_c}u\big\|_{S^0(I)}\leq & C(M,E),
\end{split}
\end{equation}
where $c_1,~c_2$ are positive constants that depend on $d,~p,~E$ and
$M$.
\end{lemma}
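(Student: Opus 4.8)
The plan is to transplant the proof of Lemma \ref{pertu1} to the non-even setting, the only structural change being that the polynomial expansion of $f(\tilde u+\gamma)-f(\tilde u)$, which is unavailable when $p$ is not an even integer, must be replaced throughout by the fractional ``derivatives of differences'' estimates of Lemma \ref{deridiff} and Lemma \ref{yashuo}. I would begin exactly as in the even case: using \eqref{equ2.20321} and \eqref{equ.1} together with a standard Strichartz bootstrap and \eqref{eq2.2230}, obtain $\||\nabla|^{s_c}\tilde u\|_{S^0(I)}\le C(E,M)$, hence $\|\tilde u\|_{Z(I)}\le C(E,M)$ by Sobolev embedding, and partition each half of $I$ about $t_0$ into $N\le C(E,M)$ subintervals $I_j=(t_j,t_{j+1})$ with $\|\tilde u\|_{Z(I_j)}\le\delta\ll1$. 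Writing $\gamma=u-\tilde u$ and $\gamma_j(t)=e^{i(t-t_j)\Delta^2}\gamma(t_j)$, the difference $\gamma$ solves
\begin{equation*}
(i\partial_t+\Delta^2)\gamma=-\big[f(\tilde u+\gamma)-f(\tilde u)\big]+eq(u)-eq(\tilde u),\qquad \gamma(t_j)=\gamma_j(t_j),
\end{equation*}
and the Strichartz estimate \eqref{equ26}, in which the nonlinear forcing is measured with one fewer derivative in $L^2_tL^{2d/(d+2)}_x$, gives on each $I_j$
\begin{equation*}
\|\gamma\|_{Z(I_j)}+\|\gamma_{j+1}\|_{Z(t_{j+1},t_N)}\lesssim\|\gamma_j\|_{Z(t_j,t_N)}+\big\||\nabla|^{s_c-1}\big[f(\tilde u+\gamma)-f(\tilde u)\big]\big\|_{L^2_tL^{2d/(d+2)}_x(I_j\times\R^d)}+\varepsilon.
\end{equation*}

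The crux is the bound on the nonlinear difference in terms of $\|\gamma\|_{Z(I_j)}$ and $\|\tilde u\|_{Z(I_j)}$. Put $s=s_c-1$, so $0\le s<1+p$. For $s_c\in[1,2)$, i.e.\ $s<1$, I would invoke Lemma \ref{deridiff} directly: it controls $\||\nabla|^s[f(\tilde u+\gamma)-f(\tilde u)]\|$ by $\||\nabla|^s\tilde u\|\,\|\gamma\|^p+\||\nabla|^s\gamma\|\,\|\tilde u+\gamma\|^p$ in suitable Strichartz norms, and both terms are handled by $\|\tilde u\|_{Z(I_j)}\le\delta$ and the bootstrap assumption $\|\gamma\|_{Z(I_j)}\le\delta$. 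For $s_c\ge2$, i.e.\ $1\le s<1+p$ (the upper bound being precisely the hypothesis $s_c<2+p$), I would use the fundamental theorem of calculus, $f(\tilde u+\gamma)-f(\tilde u)=\int_0^1[f_z(\tilde u+\theta\gamma)\gamma+f_{\bar z}(\tilde u+\theta\gamma)\bar\gamma]\,d\theta$, then strip off integer-order derivatives by the product rule \eqref{moser} until only a sub-unit fractional derivative remains acting on an expression of the form $\omega\cdot(f_z(\tilde u+\theta\gamma)-f_z(\tilde u))+\omega\cdot f_z(\tilde u)\gamma$ with $\omega$ a product of at most $\lfloor s\rfloor$ derivatives of $\tilde u+\gamma$, and close with Lemma \ref{yashuo} (together with its noted extension to $G(u)\simeq O(|u|^\alpha)$ with $\alpha>1$, which covers the range $p>1$). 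The net outcome in every case is a bound
\begin{equation*}
\big\||\nabla|^{s_c-1}\big[f(\tilde u+\gamma)-f(\tilde u)\big]\big\|_{L^2_tL^{2d/(d+2)}_x(I_j\times\R^d)}\lesssim\big(\delta^{\min(1,p)}+\|\gamma\|_{Z(I_j)}^{\min(1,p)}\big)\big(\delta+\|\gamma\|_{Z(I_j)}\big),
\end{equation*}
a superlinear polynomial in $\|\gamma\|_{Z(I_j)}$ carrying a small prefactor.

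Substituting this into the iteration inequality, choosing $\delta$ and $\varepsilon_1(M,E)$ small, and arguing by continuity in $t$ and induction on $j$ closes the bootstrap $\|\gamma\|_{Z(I_j)}\le\delta$; iterating on $j$ and using the Strichartz bound $\|\gamma_0\|_{Z}\lesssim\|u(t_0)-\tilde u(t_0)\|_{\dot H^{s_c}_x}\le\varepsilon$ from \eqref{eq2.22212} then yields $\|\gamma\|_{Z(I)}\le C(M,E)\varepsilon^{c_1}$, and a further pass through \eqref{equ26} — now estimating $\||\nabla|^{s_c}\gamma\|_{S^0(I)}$ and $\||\nabla|^{s_c}u\|_{S^0(I)}$ with Corollary \ref{fscqdfz} and Lemmas \ref{deridiff}, \ref{yashuo} — delivers the remaining two bounds in \eqref{eq2.232423}. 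I expect the main obstacle to lie in the second subcase of the nonlinear estimate: differentiating the merely H\"older-regular $f$ up to order $s_c-1$ forces careful bookkeeping so that every term of the product/chain-rule expansion carries an honest factor of $\gamma$ (or $|\nabla|^\sigma\gamma$) that the smallness of $\|\tilde u\|_{Z(I_j)}$ and $\|\gamma\|_{Z(I_j)}$ can absorb, while every spacetime exponent stays admissible in $\Lambda_0\cup\Lambda_1$ — recalling that $Z$ is not itself a biharmonic-admissible norm and is reached only through the smoothing term in \eqref{equ26}. The genuinely non-cosmetic consequence of $p$ not being an even integer is that for $p<1$ the H\"older-type estimates produce $\|\gamma\|^p$ in place of $\|\gamma\|$, which is exactly why the conclusion must carry fractional powers $\varepsilon^{c_1},\varepsilon^{c_2}$ rather than the linear dependence obtained in Lemma \ref{pertu1}.
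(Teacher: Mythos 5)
Your plan works only in the regime $p>s_c-1$ (in particular for $s_c\in[1,2)$, where Lemma \ref{deridiff} does close the $Z$-norm bootstrap exactly as in Lemma \ref{pertu1}), but it has a genuine gap precisely where the non-evenness of $p$ bites: when $p\le s_c-1$ (a non-empty regime inside $1\le s_c<2+p$, e.g.\ $p\le1$ with $s_c\ge2$, or $p=1$ in $d=12,13$), the map $u\mapsto|\nabla|^{s_c-1}f(u)$ is no longer Lipschitz in the standard Strichartz/$Z$ metric, and your key claimed estimate fails. Concretely, after the fundamental-theorem-of-calculus and product-rule reduction, the remaining fractional derivative must act on the difference $f_z(\tilde u+\gamma)-f_z(\tilde u)$ of a function that is merely $C^{p}$, and Lemma \ref{yashuo} then produces terms of the schematic form $\|\nabla\tilde u\|\;\|\gamma\|^{\,p-\frac{s}{\sigma}}\bigl(\||\nabla|^{\sigma}\gamma\|+\||\nabla|^{\sigma}\tilde u\|\bigr)^{\frac{s}{\sigma}}$: the only small factor is the fractional power $\|\gamma\|^{\,p-\frac{s}{\sigma}}$, while the remaining factors are intermediate- and full-strength Strichartz norms of $\tilde u$ and $\gamma$ that are of size $C(E,M)$ and are \emph{not} controlled by $\|\gamma\|_{Z(I_j)}$ alone. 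Hence the clean bound $(\delta^{\min(1,p)}+\|\gamma\|_{Z}^{\min(1,p)})(\delta+\|\gamma\|_{Z})$ you assert is not attainable; what one actually obtains is an inequality of the shape $x\lesssim\varepsilon+\delta x^{p}+C(E,M)\,x^{\theta}$ with $\theta<1$ and a non-small constant, which does not self-improve for small $x$, so the single-norm continuity/induction argument on the subintervals does not close.

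This is exactly the obstruction the paper points out (quoting Tao--Visan and Killip--Visan), and its proof takes a structurally different route in that regime: it introduces the exotic spaces $X^{0}(I),X(I),Y(I)$ of \eqref{zhibxqyz}, which carry only $s=\frac{p}{2}$ derivatives, proves the exotic inhomogeneous estimate of Lemma \ref{exto} and the nonlinear estimates \eqref{nlesf2}, \eqref{nlesf11}. In the weak space the difference estimate \eqref{nlesf2} is genuinely Lipschitz in $\|\omega\|_{X}$, while the $S^{0}$-level estimate \eqref{nlesf11} is only quasi-Lipschitz; accordingly the short-time perturbation Lemma \ref{shorttimep} runs a \emph{coupled two-norm bootstrap} \eqref{iter21}--\eqref{iter22} in $\|\omega\|_{X(I)}$ and $\||\nabla|^{s_c}\omega\|_{S^{0}(I)}$ simultaneously, and the long-time statement \eqref{eq2.232423} follows by iterating over subintervals with losses $\varepsilon^{c(d,p)^{j}}$ --- which is the true origin of the fractional exponents $c_1,c_2$, not merely the replacement of $\|\gamma\|$ by $\|\gamma\|^{p}$ for $p<1$ that you cite. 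To repair your argument you would need either to restrict to $p>s_c-1$ or to add a second bootstrap quantity at lower regularity (the paper's $X$-norm) together with an inhomogeneous estimate valid in that space; the single $Z$-norm cannot carry the induction.
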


The proof of the above lemma with $p>s_c-1$ is similar to Lemma
\ref{pertu1} based on the use of the standard Strichartz estimates.
However this proof can not be applied directly to $p\leq s_c-1$. The
main reason for this is that for $p\leq s_c-1$ the derivative of the
nonlinearity is no longer Lipschitz continuous in the standard
Strichartz space. In \cite{TV}, Tao and Visan first overcame this
problem in the context of the energy-critical NLS in dimensions
$d>6$ by making use of certain ``exotic Strichartz" spaces which
have same scaling with standard Strichartz space but lower
derivative. Later, Killip and Visan simplified the proof in
\cite{KVnote} where stability is established in Sobolev Strichartz
spaces where they utilized the fractional chain rule.

Therefore, we always assume that $p\leq s_c-1$. We give a sketch
proof by the similar argument as in \cite{KV2010}. First, it is
useful to define several spaces and give estimates of the
nonlinearities in terms of these spaces. Given $s:=\frac{p}2$,
define
\begin{equation}\label{zhibxqyz}
\begin{split}
\|u\|_{X^0(I)}:=&\|u\|_{L_t^{q_0}L_x^r(I\times\R^d)}\triangleq
\|u\|_{L_t^{q_0}L_x^\frac{r_0d}{d-r_0s_c}(I\times\R^d)}=\|u\|_{L_t^{q_0}L_x^\frac{(p+2)d}{d-p}(I\times\R^d)}\\
\|u\|_{X(I)}:=&\big\||\nabla|^su\big\|_{L_t^{q_0}L_x^{r_1}(I\times\R^d)},\quad r_1=\frac{2r_0d}{2d-r_0(2s_c-p)}\\
\|F\|_{Y(I)}:=&\big\||\nabla|^sF\big\|_{L_t^\frac{q_0}{1+p}L_x^{r_1'}(I\times\R^d)},\quad
\frac1{r_1}+\frac1{r_1'}=1,
\end{split}
\end{equation}
where
$(q_0,r_0)=\Big(\frac{4p(p+2)}{p^2-p(d-4)+8},\frac{d(p+2)}{d-p+s_c(p+2)}\Big)$,
$2<r_0<\frac{d}{s_c},~\frac{d+4}4p<q_0<+\infty.$ It is easy to check
that $(q_0,r,r_1,s)$ satisfies
\begin{enumerate}
\item $(q_0,r)$:  $s_c$-admissible pair, that is
$\frac4{q_0}=d\Big(\frac12-\frac1r\Big)-s_c=\frac4p-\frac{d}r.$
\item $(q_0,r_1)$:  $(s_c-s)$-admissible pair, that is
$\frac4{q_0}=d\Big(\frac12-\frac1{r_1}\Big)-(s_c-s).$
\item Nonlinear estimate
\begin{equation}\label{nlesfs1}
\|f(u)\|_{Y(I)}\lesssim\|u\|_{X(I)}\|u\|_{X^0(I)}^p\lesssim\|u\|_{X(I)}^{p+1}
\end{equation}
requires $\frac1{r_1'}=\frac1{r_1}+\frac{p}{r}.$
\item ``Exotic Strichartz estimate"  Hardy-Littlewood-Sobolev
requires:
$$1+\frac1{q_0}=d\Big(\frac14-\frac1{2r_1}\Big)+\frac{p+1}{q_0}.$$
\end{enumerate}
It is easy to verify that the Sobolev embedding relations
\begin{align}\label{inter1}
\|u\|_{X^0(I)}\lesssim&\|u\|_{X(I)}\lesssim\big\||\nabla|^{s_c}u\big\|_{S^0(I)}
\end{align}
and interpolation implies that there exist  $0<\theta_1,~\theta_2<1$
such that
\begin{align}\label{inter2}
\|u\|_{X(I)}\lesssim&\|u\|_{L_{t,x}^\frac{(d+4)p}4(I\times\R^d)}^{\theta_1}\big\||\nabla|^{s_c}u\big\|_{S^0(I)}^{1-\theta_1}\\\label{inter3}
\|u\|_{L_{t,x}^\frac{(d+4)p}4(I\times\R^d)}\lesssim&\|u\|_{X(I)}^{\theta_2}\big\||\nabla|^{s_c}u\big\|_{S^0(I)}^{1-\theta_2},
\end{align}
Also, as a direct consequence of Hardy-Littlewood-Sobolev
inequality, we have the following ``exotic Strichartz estimate".
\begin{lemma}[Exotic Strichartz estimate]\label{exto}  Let $I$
be a compact time interval containing $t_0$ , then
\begin{equation}\label{exto1}
\Big\|\int_{t_0}^te^{i(t-s)\Delta^2}F(s)ds\Big\|_{X(I)}\lesssim\big\|F\big\|_{Y(I)}.
\end{equation}
\end{lemma}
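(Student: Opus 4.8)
The plan is to prove \eqref{exto1} directly from the pointwise dispersive estimate of Lemma \ref{disp} together with the one--dimensional Hardy--Littlewood--Sobolev inequality, rather than from the vector--valued Strichartz bounds of Proposition \ref{prop1}; indeed $(q_0,r_1)$ is \emph{not} an admissible pair, and \eqref{exto1} genuinely trades integrability against the extra power of $p$ carried by the time exponent $q_0/(1+p)$ on the right--hand side, so it must be obtained by hand.

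Since $|\nabla|^s$ commutes with the propagator $e^{i(t-s')\Delta^2}$, writing $G=|\nabla|^sF$ reduces \eqref{exto1} to
\begin{equation*}
\Big\|\int_{t_0}^{t}e^{i(t-s')\Delta^2}G(s')\,ds'\Big\|_{L_t^{q_0}L_x^{r_1}(I\times\R^d)}\lesssim\|G\|_{L_t^{q_0/(1+p)}L_x^{r_1'}(I\times\R^d)}.
\end{equation*}
Set $\alpha:=\frac d4\big(1-\frac2{r_1}\big)=d\big(\frac14-\frac1{2r_1}\big)$. The scaling identity imposed after \eqref{zhibxqyz}, namely $1+\frac1{q_0}=d\big(\frac14-\frac1{2r_1}\big)+\frac{p+1}{q_0}$, rearranges to $\alpha=1-\frac p{q_0}$; since $q_0>\frac{(d+4)p}4>p$ this gives $0<\alpha<1$, and the relation $\alpha=\frac d4\big(1-\frac2{r_1}\big)$ then shows $2<r_1<\frac{2d}{d-4}$, so in particular $r_1\ge2$ and the dispersive estimate \eqref{dispers} is available.

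With these preliminaries, I would extend $G$ by zero outside $I$, apply Minkowski's integral inequality in $s'$, and invoke \eqref{dispers} with exponent $r_1$ to obtain, for each fixed $t$,
\begin{equation*}
\Big\|\int_{t_0}^{t}e^{i(t-s')\Delta^2}G(s')\,ds'\Big\|_{L_x^{r_1}}\lesssim\int_{\R}|t-s'|^{-\alpha}\,\big\|G(s')\big\|_{L_x^{r_1'}}\,ds';
\end{equation*}
passing to the absolute value of the kernel here also disposes of the retarded nature of the integral, so no Christ--Kiselev argument is needed. Taking the $L_t^{q_0}$ norm and applying the one--dimensional Hardy--Littlewood--Sobolev inequality to the fractional integration operator $h\mapsto |\,\cdot\,|^{-\alpha}\ast h$ with $h(s'):=\|G(s')\|_{L_x^{r_1'}}$ yields the claim, since the mapping property $L_t^{q_0/(1+p)}\to L_t^{q_0}$ holds precisely when $1+\frac1{q_0}=\alpha+\frac{p+1}{q_0}$ and $0<\alpha<1$ — both of which were verified above (one also uses $q_0<\infty$ and $q_0>1+p$ to meet the range conditions of HLS). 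This proves \eqref{exto1}. The argument is the standard Foschi--type construction of an ``exotic'' Strichartz estimate; the only delicate point is the exponent bookkeeping, which has been arranged in \eqref{zhibxqyz} so that the dispersive decay rate, the Hardy--Littlewood--Sobolev admissibility condition, and the constraint $r_1\ge2$ all hold at once.
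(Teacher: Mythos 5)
Your argument is correct and is essentially the paper's own proof: the authors likewise apply the dispersive estimate \eqref{dispers} pointwise in time (after Minkowski's inequality) to reduce to a one-dimensional fractional integral, and then conclude by Hardy--Littlewood--Sobolev, with the decay rate $\frac{p(d-r_0s_c)}{4r_0}=d\big(\tfrac14-\tfrac1{2r_1}\big)=1-\tfrac{p}{q_0}$ exactly as in your bookkeeping. Your additional checks (commuting $|\nabla|^s$, $0<\alpha<1$, $2<r_1<\tfrac{2d}{d-4}$, $q_0>1+p$) are implicit in the paper's choice of exponents but are verified correctly.
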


\begin{proof}
It follows from the dispersive estimate \eqref{dispers} that
$$\big\|e^{i(t-s)\Delta^2}F(s)\big\|_{L_x^{r_1}}\lesssim|t-s|^{-\frac{p(d-r_0s_c)}{4r_0}}\big\|F(s)\big\|_{L_x^{r_1'}}.$$
This together with Hardy-Littlewood-Sobolev inequality yields that
\begin{align*}
\Big\|\int_{t_0}^te^{i(t-s)\Delta^2}F(s)ds\Big\|_{L_t^{q_0}L_x^{r_1}(I\times\R^d)}\lesssim&\Big\|\int_{t_0}^t\big\|e^{i(t-s)\Delta^2}F(s)
\big\|_{L_x^{r_1}}ds\Big\|_{L_t^{q_0}(I)}\\
\lesssim&\Big\|\int_{t_0}^t|t-s|^{-\frac{p(d-r_0s_c)}{4r_0}}\big\|F(s)\big\|_{L_x^{r_1'}}ds\Big\|_{L_t^{q_0}(I)}\\
\lesssim&\big\|F\big\|_{L_t^\frac{q_0}{p+1}L_x^{r_1'}(I\times\R^d)}.
\end{align*}
\end{proof}

\begin{lemma} [Nonlinear estimates]\label{nlefs} Let $d\geq9,~1\leq s_c<2+p$, and $I$ be a time interval. Then
\begin{align}\nonumber
\big\|f_z(u+v)\omega\big\|_{Y(I)}+\big\|f_{\bar{z}}(u+v)\bar{\omega}\big\|_{Y(I)}&\\\label{nlesf2}
\lesssim\Big(\|u\|_{X(I)}^\frac{p(s_c-1)}{s_c}\big\||\nabla|^{s_c}u\big\|_{S^0(I)}^\frac{p}{s_c}+&\|v\|_{X(I)}^\frac{p(s_c-1)}{s_c}
\big\||\nabla|^{s_c}v\big\|_{S^0(I)}^\frac{p}{s_c}\Big)\|\omega\|_{X(I)},
\end{align}
and there exists $\beta\in(0,p)$ such that
\begin{align}\label{nlesf11}
&\big\||\nabla|^{s_c-1}\big[f(u+v)-f(u)\big]\big\|_{L_t^2L_x^\frac{2d}{d+2}(I\times\R^d)}\\\nonumber
\lesssim&\big\||\nabla|^{s_c}u\big\|_{S^0(I)}\Big[\|v\|_{X^0(I)}^p+\|u\|_{X^0(I)}^{p-1+\frac1{s_c}}\|v\|_{X^0(I)}^{1-\frac1{s_c}}+\big(
\|u\|_{X^0(I)}^{p-1+\frac1{s_c}}+\|v\|_{X^0(I)}^{p-1+\frac1{s_c}}\big)\big\||\nabla|^{s_c}u\big\|_{S^0(I)}^{1-\frac1{s_c}}\Big]\\\nonumber
&+\big\||\nabla|^{s_c}u\big\|_{S^0(I)}\Big(\|v\|_{X^0(I)}^{p}+\|u\|_{X^0(I)}^{\beta}\|v\|_{X^0(I)}^{p-\beta}\Big)
+\big\||\nabla|^{s_c}u\big\|_{S^0(I)}^\frac1{s_c}\big\||\nabla|^{s_c}v\big\|_{S^0(I)}^{1-\frac1{s_c}}\|u\|_{X^0(I)}^{1-\frac1{s_c}}
\|v\|_{X^0(I)}^{p-1+\frac1{s_c}}.
\end{align}
\end{lemma}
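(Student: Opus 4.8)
Since $f(z)=|z|^pz$, one has $f_z(z),f_{\bar z}(z)=O(|z|^p)$, so the plan is first to reduce both estimates to fractional‑calculus bounds for the model nonlinearity $|u|^p$, and then to transport the output into the exotic spaces $X(I),Y(I)$ and into $L_t^2L_x^{\frac{2d}{d+2}}$ using the embeddings \eqref{inter1}, the interpolation inequalities \eqref{inter2}--\eqref{inter3}, Hölder in spacetime, and Corollary \ref{fscqdfz}. The tools that do the real work are the fractional product rule \eqref{moser}, the fractional chain rules \eqref{fraclsfz} and \eqref{hfscls}, and the ``derivatives of differences'' estimates Lemma \ref{deridiff} and Lemma \ref{yashuo} (together with the extension of the latter to $G\simeq O(|u|^\alpha)$, $\alpha>1$, noted after its statement). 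Recall that throughout this subsection $p\le s_c-1$, so $s_c-1\ge p$ and a naive chain rule taking $s_c-1$ derivatives of $|u|^p$ is unavailable.

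For \eqref{nlesf2}: recalling that $\|\cdot\|_{Y(I)}=\big\||\nabla|^s\,\cdot\,\big\|_{L_t^{q_0/(1+p)}L_x^{r_1'}}$ with $s=\tfrac p2$, I would write $f_z(u+v)\omega=O(|u+v|^p\omega)$, expand $|u+v|^p$ into finitely many monomials in $u,v$, and use \eqref{moser} to split $|\nabla|^s(|u+v|^p\omega)$ into a term carrying $|\nabla|^s$ on a factor of $|u+v|^p$ and a term $|u+v|^p\cdot|\nabla|^s\omega$. In the second term, Hölder in space together with $\dot H^{s_c}_x\hookrightarrow L_x^{\frac{(p+2)d}{d-p}}$ (cf. \eqref{inter1}) disposes of the $p$ undifferentiated factors and leaves $(\|u\|_{X(I)}+\|v\|_{X(I)})^p\|\omega\|_{X(I)}$, which — since $\|u\|_{X(I)}\le\big\||\nabla|^{s_c}u\big\|_{S^0(I)}$ by \eqref{inter1} — is already dominated by the claimed right‑hand side. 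The first term is the only place where real regularity of $|u+v|^p$ is needed: there I would apply the Hölder chain rule \eqref{hfscls} when $p\le1$, and the $C^1$ chain rule \eqref{fraclsfz} (iterated with the ordinary Leibniz rule if $\tfrac p2>1$) when $p>1$, choosing the auxiliary exponent $\sigma$ so that the resulting $\big\||\nabla|^\sigma(u+v)\big\|$‑norm interpolates, via \eqref{inter2}, precisely into $\|u\|_{X(I)}^{p(s_c-1)/s_c}\big\||\nabla|^{s_c}u\big\|_{S^0(I)}^{p/s_c}$ (plus its $v$‑analogue); an equivalent route is to feed the whole product directly into Lemma \ref{yashuo} with $u\mapsto0$, $v\mapsto u+v$, $G=f_z$, $\sigma$ near $s_c$, and then interpolate. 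The scaling of the inequality is forced in each term, so what remains is only a (tedious) check of Hölder exponents.

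For \eqref{nlesf11}, which I expect to be the main obstacle: by the fundamental theorem of calculus $f(u+v)-f(u)=\int_0^1\big[f_z(u+\theta v)\,v+f_{\bar z}(u+\theta v)\,\bar v\big]\,d\theta$. If $0\le s_c-1<1$ one can instead apply Lemma \ref{deridiff} directly to $f(u+v)-f(u)$; if $1\le s_c-1<1+p$ I would write $|\nabla|^{s_c-1}=|\nabla|^{s_c-2}\nabla$, apply the ordinary Leibniz rule to $\nabla\big[f(u+v)-f(u)\big]$, and reorganize it as $\big(f_z(u+v)-f_z(u)\big)\nabla(u+v)+f_z(u)\nabla v$ plus conjugate terms, so that only $s_c-2<p$ further fractional derivatives must be distributed. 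The terms containing a difference $f_z(u+v)-f_z(u)$ (or, when $p>1$, the remainder $\int_0^1f_z(u+\theta v)\,d\theta-f_z(u)$) are handled by Lemma \ref{yashuo} and its $\alpha>1$ extension, with the accompanying $v$ or $\nabla(u+v)$ playing the role of the factor ``$\omega$'' there; the terms containing $f_z(u)$ alone are handled by \eqref{moser} followed by \eqref{fraclsfz} or \eqref{hfscls} (legitimate precisely because now only $s_c-2<p$ derivatives act on $f_z$) and by Corollary \ref{fscqdfz}. Finally I would convert every norm into $X^0(I)$‑norms by Sobolev embedding — so the small quantity $\|v\|_{X^0(I)}$ enters each term to a strictly positive power, which is exactly what the subsequent perturbation argument requires — and into $\big\||\nabla|^{s_c}u\big\|_{S^0(I)}$ and $\big\||\nabla|^{s_c}v\big\|_{S^0(I)}$ via \eqref{inter1} and \eqref{inter3}, using interpolation to generate the fractional powers $\tfrac1{s_c}$ and $1-\tfrac1{s_c}$; the parameter $\beta\in(0,p)$ merely records the Hölder loss incurred by the extended Lemma \ref{yashuo} when $p>1$. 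The genuinely hard part is the bookkeeping: arranging the Leibniz‑and‑difference terms so that each ends up with a positive power of $\|v\|_{X^0(I)}$ and with at most one power (indeed $1-\tfrac1{s_c}<1$) of $\big\||\nabla|^{s_c}v\big\|_{S^0(I)}$, which is what forces the slightly baroque right‑hand side displayed in \eqref{nlesf11}.
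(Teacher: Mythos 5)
Your proposal follows essentially the same route as the paper: \eqref{nlesf2} via the product rule \eqref{moser} together with the chain rules \eqref{fxxqd}/\eqref{hfscls} applied to $f_z(u+v)$ as a whole and interpolation against $\big\||\nabla|^{s_c}\cdot\big\|_{S^0(I)}$, and \eqref{nlesf11} via Lemma \ref{deridiff} when $s_c\in[1,2)$ and, for $s_c\in[2,2+p)$, the splitting $|\nabla|^{s_c-1}=|\nabla|^{s_c-2}\nabla$ with a Leibniz/difference decomposition (your grouping differs only cosmetically from the paper's $\nabla v\cdot f'(u+v)+\nabla u\cdot\big(f'(u+v)-f'(u)\big)$), the difference term handled by Lemma \ref{yashuo} and its $\alpha>1$ extension --- which is where $\beta=\frac{s_c-1}{\sigma}$ arises, already for $p\leq1$ and not only for $p>1$ --- and the remaining term by \eqref{moser} and the chain rules, all converted into the $X^0$, $X$ and $S^0$ norms by Sobolev embedding and interpolation. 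The only misstep is the aside about expanding $|u+v|^p$ into finitely many monomials, which is not available for the non-even $p$ treated in this section, but it is harmless since the chain-rule treatment you (and the paper) actually rely on keeps $u+v$ intact.
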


\vskip0.2cm

\begin{proof}

{\bf The proof of \eqref{nlesf2}:} It suffices to prove the first
term on the left-hand side, as the second term can be estimate by
the same way. Using \eqref{moser} and \eqref{inter1}, we derive
\begin{align}\nonumber
\big\|f_z(u+v)\omega\big\|_{Y(I)}\lesssim&\big\|f_z(u+v)\big\|_{L_t^\frac{q_0}{p}L_x^\frac{r_0d}{p(d-r_0s_c)}}\|\omega\|_{X(I)}\\\nonumber
&+\big\||\nabla|^\frac{p}2f_z(u+v)\big\|_{L_t^\frac{q_0}pL_x^\frac{2r_0d}{p(2d-2r_0s_c+r_0)}}\|\omega\|_{X^0(I)}\\\label{nlesclaim}
\lesssim&\Big(\|u+v\|_{X^0(I)}^p+\big\||\nabla|^\frac{p}2f_z(u+v)\big\|_{L_t^\frac{q_0}pL_x^\frac{2r_0d}{p(2d-2r_0s_c+r_0)}}\Big)\|\omega\|_{X(I)}.
\end{align}
Hence, from \eqref{inter1}, we know that the proof of \eqref{nlesf2}
can be reduced to prove
\begin{align*}
&\big\||\nabla|^\frac{p}2f_z(u+v)\big\|_{L_t^\frac{q_0}pL_x^\frac{2r_0d}{p(2d-2r_0s_c+r_0)}}\\
\lesssim&\|u\|_{X(I)}^\frac{p(s_c-1)}{s_c}\big\||\nabla|^{s_c}u\big\|_{S^0(I)}^\frac{p}{s_c}+\|v\|_{X(I)}^\frac{p(s_c-1)}{s_c}
\big\||\nabla|^{s_c}v\big\|_{S^0(I)}^\frac{p}{s_c}.
\end{align*}

{\bf Case 1: $p\geq1$.} Using \eqref{fxxqd}  and \eqref{inter1}, we
get
$$\big\||\nabla|^\frac{p}2f_z(u+v)\big\|_{L_t^\frac{q_0}pL_x^\frac{2r_0d}{p(2d-2r_0s_c+r_0)}}\lesssim\|u+v\|_{X^0(I)}^{p-1}\|u+v\|_{X(I)}
\lesssim\|u+v\|_{X(I)}^p.$$

{\bf Case 2: $p<1$.} By the fractional chain rule \eqref{hfscls}
with $s=\frac{p}2,~\frac12<\sigma<1$, H\"older inequality, Sobolev
embedding and interpolation, we obtain
\begin{align*}
\big\||\nabla|^\frac{p}2f_z(u+v)\big\|_{L_t^\frac{q_0}pL_x^\frac{2r_0d}{p(2d-2r_0s_c+r_0)}}\lesssim&\|u+v\|_{X^0(I)}^{p-\frac{p}{2\sigma}}
\big\||\nabla|^\sigma(u+v)\big\|_{L_t^{q_0}L_x^\frac{r_0d}{d-r_0(s_c-\sigma)}}^\frac{p}{2\sigma}\\
\lesssim&\big\||\nabla|^\sigma(u+v)\big\|_{L_t^{q_0}L_x^\frac{r_0d}{d-r_0(s_c-\sigma)}}^{p}\\
\lesssim&\|u\|_{X(I)}^\frac{p(s_c-1)}{s_c}\big\||\nabla|^{s_c}u\big\|_{S^0(I)}^\frac{p}{s_c}+\|v\|_{X(I)}^\frac{p(s_c-1)}{s_c}
\big\||\nabla|^{s_c}v\big\|_{S^0(I)}^\frac{p}{s_c}.
\end{align*}
Plugging this into \eqref{nlesclaim}, we get \eqref{nlesf2}.

\noindent{\bf \quad The proof of \eqref{nlesf11}:} When
$s_c\in[1,2),$ it is easy to show \eqref{nlesf11} by Lemma
\ref{deridiff}, H\"older's inequality and Sobolev embedding.  Now we
consider $s_c\in[2,p+2).$ For $p\leq1.$ Using the Fundamental
Theorem of Calculus and triangle inequality, we deduce that
\begin{align}\nonumber
&\big\||\nabla|^{s_c-1}\big[f(u+v)-f(u)\big]\big\|_{L_t^2L_x^\frac{2d}{d+2}(I\times\R^d)}\\\nonumber
\lesssim&\big\||\nabla|^{s_c-2}\big[\nabla v\cdot
f'(u+v)\big]\big\|_{L_t^2L_x^\frac{2d}{d+2}(I\times\R^d)}\\\label{nlesf3}
&+\Big\||\nabla|^{s_c-2}\Big[\nabla
u\cdot\big(f'(u+v)-f'(u)\big)\Big]\Big\|_{L_t^2L_x^\frac{2d}{d+2}(I\times\R^d)}.
\end{align}
One one hand, by Lemma \ref{moser}, fractional chain rule
\eqref{hfscls}, H\"older inequality and interpolation, we obtain
\begin{align*}
\big\||\nabla|^{s_c-2}\big[\nabla v\cdot
|v|^p\big]\big\|_{L_t^2L_x^\frac{2d}{d+2}(I\times\R^d)}
\lesssim&\big\||\nabla|^{s_c-1}v\big\|_{L_t^{q_2}L_x^{r_2}}\|v\|_{X^0(I)}^p\\
\lesssim&\big\||\nabla|^{s_c}v\big\|_{L_t^{q_2}L_x^{r_3}}\|v\|_{X^0(I)}^p\\
\lesssim&\big\||\nabla|^{s_c}v\big\|_{S^0(I)}\|v\|_{X^0(I)}^p,
\end{align*}
where
\begin{equation*}
\begin{cases}
\frac12=\frac1{q_2}+\frac{p}{q_0},\Rightarrow~q_2=\frac{2q_0}{q_0-p},\\
\frac{d+2}2=\frac{d}{r_2}+\frac{(d-p)p}{p+2}\\
\frac4{q_2}=d\big(\frac12-\frac1{r_3}\big),\Rightarrow~r_3=\frac{2q_0d}{(d-4)q_0+4p},\\
1-\frac{d}{r_3}=-\frac{d}{r_2}.
\end{cases}
\end{equation*}
Hence
\begin{align*}
&\big\||\nabla|^{s_c-2}\big[\nabla v\cdot
f'(u+v)\big]\big\|_{L_t^2L_x^\frac{2d}{d+2}(I\times\R^d)}\\
\lesssim&\big\||\nabla|^{s_c}v\big\|_{S^0(I)}\big(\|u\|_{X^0(I)}^p+\|v\|_{X^0(I)}^p\big)+\big\||\nabla|^{s_c}v\big\|_{S^0(I)}^\frac1{s_c}\|v\|_{X^0(I)}^
{1-\frac1{s_c}}\|u+v\|_{X^0(I)}^{p-1+\frac1{s_c}}\big\||\nabla|^{s_c}(u+v)\big\|_{S^0(I)}^{1-\frac1{s_c}}\\
\lesssim&\big\||\nabla|^{s_c}v\big\|_{S^0(I)}\Big[\|v\|_{X^0(I)}^p+\|u\|_{X^0(I)}^{p-1+\frac1{s_c}}\|v\|_{X^0(I)}^{1-\frac1{s_c}}+\big(
\|u\|_{X^0(I)}^{p-1+\frac1{s_c}}+\|v\|_{X^0(I)}^{p-1+\frac1{s_c}}\big)\big\||\nabla|^{s_c}v\big\|_{S^0(I)}^{1-\frac1{s_c}}\Big].
\end{align*}
On the other hand, by Lemma \ref{yashuo}, H\"older inequality,
interpolation and  \eqref{inter1}, one has
\begin{align*}
&\Big\||\nabla|^{s_c-2}\Big[\nabla
u\cdot\big(f'(u+v)-f'(u)\big)\Big]\Big\|_{L_t^2L_x^\frac{2d}{d+2}(I\times\R^d)}\\
\lesssim&\big\||\nabla|^{s_c}u\big\|_{S^0(I)}\|v\|_{X^0(I)}^p+\big\||\nabla|^{s_c}u\big\|_{S^0(I)}\|u\|_{X^0(I)}^{\frac{s_c-1}\sigma}
\|v\|_{X^0(I)}^{p-\frac{s_c-1}\sigma}\\
&+\big\||\nabla|^{s_c}u\big\|_{S^0(I)}^\frac1{s_c}\|u\|_{X^0(I)}^{1-\frac1{s_c}}
\big\||\nabla|^{s_c}v\big\|_{S^0(I)}^{1-\frac1{s_c}}\|v\|_{X^0(I)}^{p-1+\frac1{s_c}},
\end{align*}
where $s_c-1<\sigma p<p.$ Letting $\beta:=\frac{s_c-1}{\sigma}$, we
derive \eqref{nlesf11} for $p\leq1$.We can iterate the argument
presented above to obtain \eqref{nlesf11} for $p>1$. This concludes
the proof of this lemma.
\end{proof}

Before we prove the perturbation. We first show the short-time
perturbations.

\begin{lemma}[short-time perturbation]\label{shorttimep}
Let $d\geq9$,   $p$ be not an even integer and $1\leq s_c<2+p.$ Let
$I$ be a compact time interval and $u,~\tilde{u}$ satisfy
\begin{align*}
(i\partial_t+\Delta^2)u=&-f(u)+eq(u)\\
(i\partial_t+\Delta^2)\tilde{u}=&-f(\tilde{u})+eq(\tilde{u})
\end{align*}
for some function $eq(u),eq(\tilde{u})$, and $f(u)=|u|^pu$.
 Assume that for some constants $E>0$, we have
\begin{align}\label{eq2.22310}
\|u\|_{L_t^\infty(I;
\dot{H}^{s_c}_x(\R^d))}+\|\tilde{u}\|_{L_t^\infty(I;
\dot{H}^{s_c}_x(\R^d))}\leq E.
\end{align} Moreover, for $t_0\in I$,  and assume that  smallness conditions
\begin{align}\label{stabxx1}
\|\tilde{u}\|_{X(I)}\leq&\delta\\\label{stabxx2}
\big\|u(t_0)-\tilde{u}(t_0)\big\|_{\dot{H}^{s_c}}\leq&\varepsilon\\\label{stabxx3}
\big\||\nabla|^{s_c-1}\big(eq(u),eq(\tilde{u})\big)\big\|_{L_t^2L_x^\frac{2d}{d+2}(I\times\R^d)}\leq&\varepsilon
\end{align}
for some small $0<\varepsilon<\varepsilon_1=\varepsilon_1(M,E)$ and
$0<\varepsilon<\varepsilon_0(E).$ Then  we conclude that
\begin{align}\label{shortte1}
\|u-\tilde{u}\|_{X(I)}\lesssim&\varepsilon\\\label{shortte2}
\big\||\nabla|^{s_c}(u-\tilde{u})\big\|_{S^{0}(I)}\lesssim&
\varepsilon^{c(d,p)}\\\label{shortte3}
\big\||\nabla|^{s_c}u\big\|_{S^{0}(I)}\lesssim& E\\\label{shortte4}
\big\|f(u)-f(\tilde{u})\big\|_{Y(I)}\lesssim&\varepsilon\\\label{shortte5}
\big\||\nabla|^{s_c-1}\big(f(u)-f(\tilde{u})\big)\big\|_{L_t^2L_x^\frac{2d}{d+2}(I\times\R^d)}\lesssim&\varepsilon^{c(d,p)},
\end{align}
for some positive constant $c(d,p)$.
\end{lemma}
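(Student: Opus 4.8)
The plan is to first reduce to the case $p\le s_c-1$, the case $p>s_c-1$ being treated exactly as the even‑integer Lemma~\ref{pertu1} via the standard Strichartz spaces; in particular $s_c>1$ from now on. Put $w:=u-\tilde u$. Subtracting the two equations, $w$ solves $(i\partial_t+\Delta^2)w=-\bigl[f(\tilde u+w)-f(\tilde u)\bigr]+\bigl(eq(u)-eq(\tilde u)\bigr)$, so that for $t\in I$,
\[
w(t)=e^{i(t-t_0)\Delta^2}w(t_0)-i\int_{t_0}^{t}e^{i(t-s)\Delta^2}\Bigl(\bigl[f(\tilde u+w)-f(\tilde u)\bigr]-\bigl(eq(u)-eq(\tilde u)\bigr)\Bigr)(s)\,ds .
\]
I would control $w$ first in the exotic space $X(I)$, where the estimate closes cleanly at size $O(\varepsilon)$, and only then upgrade to the critical space $\bigl\||\nabla|^{s_c}\cdot\bigr\|_{S^0(I)}$, where a power loss $\varepsilon^{c(d,p)}$ is unavoidable because the difference estimate \eqref{nlesf11} produces terms carrying only fractional powers of the difference. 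Everything is run as a continuity/bootstrap argument on the subinterval of $I$ between $t_0$ and a moving endpoint, using that every difference quantity vanishes at $t_0$; I suppress this routine device below. As a preliminary, since $\|\tilde u\|_{X(I)}\le\delta$ is small, a Strichartz bootstrap of the type used in the local theory (Theorem~\ref{lwp}), fed by the nonlinear estimates underlying Lemma~\ref{nlefs} and the smallness \eqref{stabxx3} of $eq(\tilde u)$, gives $\bigl\||\nabla|^{s_c}\tilde u\bigr\|_{S^0(I)}\lesssim E$; granting momentarily the $X(I)$‑bound on $w$ below one has $\|u\|_{X(I)}\le\|\tilde u\|_{X(I)}+\|w\|_{X(I)}\lesssim\delta$, whence the same bootstrap applied to $u$ will give \eqref{shortte3}.

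\textbf{The clean estimate \eqref{shortte1}.} Apply the exotic Strichartz estimate \eqref{exto1} to the Duhamel formula for $w$. By \eqref{inter1} and the standard Strichartz estimate, the data term obeys $\bigl\|e^{i(t-t_0)\Delta^2}w(t_0)\bigr\|_{X(I)}\lesssim\bigl\||\nabla|^{s_c}e^{i(t-t_0)\Delta^2}w(t_0)\bigr\|_{S^0(I)}\lesssim\|w(t_0)\|_{\dot H^{s_c}}\le\varepsilon$ by \eqref{stabxx2}; and by \eqref{inter1} together with the standard inhomogeneous Strichartz estimate \eqref{equ26}, the contribution of $eq(u)-eq(\tilde u)$ to $\|w\|_{X(I)}$ is $\lesssim\varepsilon$ by \eqref{stabxx3}. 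For the genuine nonlinear term I would write $f(\tilde u+w)-f(\tilde u)=\int_0^1\bigl[f_z(\tilde u+\theta w)\,w+f_{\bar z}(\tilde u+\theta w)\,\bar w\bigr]\,d\theta$, use Lemma~\ref{exto} and then \eqref{nlesf2} (with $u,v,\omega$ there taken to be $\tilde u,\theta w,w$), together with $\theta\le1$, the preliminary bound $\bigl\||\nabla|^{s_c}\tilde u\bigr\|_{S^0(I)}\lesssim E$ and the bootstrap hypothesis $\bigl\||\nabla|^{s_c}u\bigr\|_{S^0(I)}\le 2C(E)$ (so that $\bigl\||\nabla|^{s_c}w\bigr\|_{S^0(I)}\lesssim_E 1$); this bounds the nonlinear Duhamel term in $X(I)$ by $C(E)\bigl(\delta^{\theta_0}+\|w\|_{X(I)}^{\theta_0}\bigr)\|w\|_{X(I)}$ with $\theta_0=\tfrac{p(s_c-1)}{s_c}>0$. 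Collecting, $\|w\|_{X(I)}\le C\varepsilon+C(E)\bigl(\delta^{\theta_0}+\|w\|_{X(I)}^{\theta_0}\bigr)\|w\|_{X(I)}$; choosing $\delta$ small in terms of $E$ and then $\varepsilon_0,\varepsilon_1$ small, the continuity argument forces $\|w\|_{X(I)}\lesssim\varepsilon$, which is \eqref{shortte1}, and $\|w\|_{X^0(I)}\lesssim\varepsilon$ by \eqref{inter1}.

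\textbf{Upgrade and the remaining estimates.} Apply the standard Strichartz estimate \eqref{equ26} at regularity $s_c$ to $w$:
\[
\bigl\||\nabla|^{s_c}w\bigr\|_{S^0(I)}\lesssim\|w(t_0)\|_{\dot H^{s_c}}+\bigl\||\nabla|^{s_c-1}\bigl[f(u)-f(\tilde u)\bigr]\bigr\|_{L_t^2L_x^{\frac{2d}{d+2}}(I\times\R^d)}+\bigl\||\nabla|^{s_c-1}\bigl(eq(u)-eq(\tilde u)\bigr)\bigr\|_{L_t^2L_x^{\frac{2d}{d+2}}(I\times\R^d)} .
\]
The first and third terms are $\lesssim\varepsilon$ by \eqref{stabxx2}, \eqref{stabxx3}. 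To the middle term, noting $f(u)-f(\tilde u)=f(\tilde u+w)-f(\tilde u)$, I would apply \eqref{nlesf11} with $u,v$ there taken to be $\tilde u,w$: every summand on the right either carries a positive power of $\|w\|_{X^0(I)}\lesssim\varepsilon$, or a small coefficient containing a positive power of $\|\tilde u\|_{X^0(I)}\le\delta$, while $\bigl\||\nabla|^{s_c}\tilde u\bigr\|_{S^0(I)}\lesssim E$ by the preliminary step; the only occurrences of $\bigl\||\nabla|^{s_c}w\bigr\|_{S^0(I)}$ on the right are to a power $\le1$ multiplied by such a small coefficient, so they are absorbed into the left side by Young's inequality. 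This yields \eqref{shortte2} with the explicit exponent $c(d,p)$ fixed by the fractional powers in \eqref{nlesf11}, and it is simultaneously \eqref{shortte5}. Finally \eqref{shortte4} follows from \eqref{nlesf2} and the bounds already obtained, and combining \eqref{shortte2} with the preliminary bound on $\tilde u$ gives $\bigl\||\nabla|^{s_c}u\bigr\|_{S^0(I)}\le\bigl\||\nabla|^{s_c}\tilde u\bigr\|_{S^0(I)}+\bigl\||\nabla|^{s_c}w\bigr\|_{S^0(I)}\lesssim E$, which is \eqref{shortte3} and closes the bootstrap on $\bigl\||\nabla|^{s_c}u\bigr\|_{S^0(I)}$.

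\textbf{Main obstacle.} The heart of the matter is that for $p\le s_c-1$ the nonlinearity $|u|^pu$ is too rough for a Leibniz‑type fractional chain rule, so one cannot run the perturbation purely in standard Strichartz spaces as in the even‑integer case (Lemma~\ref{pertu1}); the exotic spaces $X,X^0,Y$, which carry only $s=p/2<s_c$ derivatives, bypass this at the price of the delicate difference estimates of Lemma~\ref{nlefs}, themselves resting on Lemmas~\ref{deridiff} and \ref{yashuo}. Accordingly the real work is twofold: (i) verifying that all the exponents appearing in \eqref{nlesf2}–\eqref{nlesf11} are admissible under $d\ge9$, $1\le s_c<2+p$ and $p\le s_c-1$, and — crucially — that the prefactors one needs to be small are genuinely powered by $\delta$ or $\varepsilon$ rather than merely bounded by $E$; and (ii) closing the coupled continuity argument for the triple $\bigl(\|w\|_{X(I)},\ \bigl\||\nabla|^{s_c}w\bigr\|_{S^0(I)},\ \bigl\||\nabla|^{s_c}u\bigr\|_{S^0(I)}\bigr)$ while keeping track of the surviving power $c(d,p)$.
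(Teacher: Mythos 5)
Your proposal follows essentially the same route as the paper: reduce to $p\le s_c-1$, work in the exotic spaces $X,X^0,Y$ with Lemma \ref{exto} and the two nonlinear estimates \eqref{nlesf2}, \eqref{nlesf11}, close the difference first in $X(I)$ at size $O(\varepsilon)$, then upgrade to $\||\nabla|^{s_c}(u-\tilde u)\|_{S^0(I)}$ by standard Strichartz at regularity $s_c$, absorbing the small-coefficient occurrences of the critical norm and accepting the fractional power $c(d,p)$. The only cosmetic difference is that the paper establishes $\|u\|_{X(I)}\lesssim\delta$ as a separate preliminary step (via the reversed Duhamel formula for $e^{i(t-t_0)\Delta^2}\tilde u(t_0)$ and \eqref{stabxx2}) before running the coupled iteration \eqref{iter21}--\eqref{iter22}, whereas you recover this smallness inside the joint bootstrap by the triangle inequality; both versions close.
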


\begin{proof}

{\bf Step 1: We claim that
$\big\||\nabla|^{s_c}\tilde{u}\big\|_{S^0(I)}\lesssim E$.}

Indeed, using Strichartz estimate, Corollary \ref{fscqdfz},
 \eqref{stabxx1} and
\eqref{stabxx3}, we get
\begin{align*}
\big\||\nabla|^{s_c}\tilde{u}\big\|_{S^0(I)}\lesssim&\|\tilde{u}\|_{L_t^\infty\dot{H}^{s_c}}+\big\||\nabla|^{s_c}f(\tilde{u})\big\|_{N^0(I)}
+\big\||\nabla|^{s_c}e\big\|_{N^0(I)}\\
\lesssim&E+\|u\|_{L_{t,x}^\frac{(d+4)p}4(I\times\R^d)}^p\big\||\nabla|^{s_c}\tilde{u}\big\|_{S^0(I)}+\varepsilon\\
\lesssim&E+\delta^{p\theta_2}\big\||\nabla|^{s_c}\tilde{u}\big\|_{S^0(I)}^{1+p(1-\theta_2)}+\varepsilon.
\end{align*}
Hence, by the standard bootstrap argument, and choosing
$\delta,~\varepsilon_0$ to be sufficiently small, we obtain
\begin{equation}\label{step1tilde}
\big\||\nabla|^{s_c}\tilde{u}\big\|_{S^0(I)}\lesssim E.
\end{equation}

{\bf Step 2: We claim that $\|u\|_{X(I)}\lesssim\delta.$}

By Lemma \ref{exto}, \eqref{nlesfs1}, \eqref{stabxx1} and
\eqref{stabxx3}, one has
$$\big\|e^{i(t-t_0)\Delta^2}\tilde{u}(t_0)
\big\|_{X(I)}\lesssim\|\tilde{u}\|_{X(I)}+\|f(\tilde{u})\|_{Y(I)}+\big\||\nabla|^{s_c}eq(\tilde{u})\big\|_{N^0(I)}
\lesssim \delta+\delta^{p+1}+\varepsilon\lesssim\delta.$$ Combining
this with the triangle inequality, \eqref{inter1}, Strichartz
estimate and \eqref{stabxx2}, we derive
\begin{align*}
\big\|e^{i(t-t_0)\Delta^2}u(t_0)\big\|_{X(I)}\lesssim&\big\|e^{i(t-t_0)\Delta^2}\tilde{u}(t_0)
\big\|_{X(I)}+\big\|e^{i(t-t_0)\Delta^2}(u-\tilde{u})(t_0)\big\|_{X(I)}\\
\lesssim&\delta+\big\|u(t_0)-\tilde{u}(t_0)\big\|_{\dot{H}^{s_c}}\lesssim\delta.
\end{align*}
On the other hand, by Lemma \ref{exto} and Lemma \ref{nlefs}, we get
$$\|u\|_{X(I)}\lesssim\big\|e^{i(t-t_0)\Delta^2}u(t_0)\big\|_{X(I)}+\|f(u)\|_{Y(I)}+
\big\||\nabla|^{s_c-1}eq(u)\big\|_{L_t^2L_x^\frac{2d}{d+2}(I\times\R^d)}\lesssim\delta+\|u\|_{X(I)}^{p+1}.$$
Thus, we obtain by the bootstrap argument
\begin{equation}\label{uxix}
\|u\|_{X(I)}\lesssim\delta.
\end{equation}

{\bf Step 3:} Next we prove the following iteration formula
\begin{align}\label{iter21}
\|\omega\|_{X(I)}\lesssim\varepsilon+\big\||\nabla|^{s_c}\omega\big\|_{S^0(I)}^\frac{p}{s_c}
\|\omega\|_{X(I)}^{1+\frac{p(s_c-1)}{s_c}},\\\label{iter22}
\big\||\nabla|^{s_c}\omega\big\|_{S^0(I)}\lesssim\varepsilon+\|\omega\|_{X(I)}^{p-\beta}+\big\||\nabla|^{s_c}\omega\big\|_{S^0(I)}^{1-\frac1{s_c}}
\|\omega\|_{X(I)}^{p-1+\frac1{s_c}},
\end{align}
where $\omega=u-\tilde{u}$ satisfies the difference equation
\begin{equation}\label{xianchapneq2}
\left\{\begin{aligned}
&i\omega_t+\Delta^2 \omega=-f(\tilde{u}+\omega)+f(\tilde{u})+eq(u)-eq(\tilde{u}), \\
&\omega(t_0,x)=u(t_0,x)-\tilde{u}(t_0,x)\in \dot H^{s_c}(\R^d).
\end{aligned}\right.
\end{equation}
Using Lemma \ref{exto}, Strichartz estimate, \eqref{stabxx1} and
\eqref{stabxx2}, we get
\begin{align}\nonumber
\|\omega\|_{X(I)}\lesssim&\big\|u(t_0)-\tilde{u}(t_0)\big\|_{\dot{H}^{s_c}}+
\big\||\nabla|^{s_c-1}\big(eq(u),eq(\tilde{u})\big)\big\|_{L_t^2L_x^\frac{2d}{d+2}(I\times\R^d)}+\big\|f(u)-f(\tilde{u})\big\|_{Y(I)}\\\label{stabomw1}
 \lesssim&\varepsilon+\big\|f(u)-f(\tilde{u})\big\|_{Y(I)}.
\end{align}
{\bf The estimate of $\big\|f(u)-f(\tilde{u})\big\|_{Y(I)}$:} From
Lemma \ref{nlefs}, \eqref{stabxx1} and Step 1: \eqref{step1tilde},
we know that
\begin{align}\nonumber
\big\|f(u)-f(\tilde{u})\big\|_{Y(I)}\lesssim&\Big[\|\tilde{u}\|_{X(I)}^\frac{p(s_c-1)}{s_c}\big\||\nabla|^{s_c}\tilde{u}\big\|_{S^0(I)}^\frac{p}{s_c}
+\|\omega\|_{X(I)}^\frac{p(s_c-1)}{s_c}\big\||\nabla|^{s_c}\omega\big\|_{S^0(I)}^\frac{p}{s_c}\Big]\|\omega\|_{X(I)}\\\label{xichaf1}
\lesssim&\delta^\frac{p(s_c-1)}{s_c}E^\frac{p}{s_c}\|\omega\|_{X(I)}+\big\||\nabla|^{s_c}\omega\big\|_{S^0(I)}^\frac{p}{s_c}
\|\omega\|_{X(I)}^{1+\frac{p(s_c-1)}{s_c}}.
\end{align}
Plugging this into \eqref{stabomw1}, and taking $\delta$
sufficiently small, we have
\begin{equation}\label{stabxhdied1}
\|\omega\|_{X(I)}\lesssim\varepsilon+\big\||\nabla|^{s_c}\omega\big\|_{S^0(I)}^\frac{p}{s_c}
\|\omega\|_{X(I)}^{1+\frac{p(s_c-1)}{s_c}}.
\end{equation} This is \eqref{iter21}.

On the other hand, using Strichartz estimate, \eqref{stabxx2} and
\eqref{stabxx3}, we obtain
\begin{align}\nonumber
&\big\||\nabla|^{s_c}\omega\big\|_{S^0(I)}\\\nonumber
\lesssim&\big\|u(t_0)-\tilde{u}(t_0)
\big\|_{\dot{H}^{s_c}}+\big\||\nabla|^{s_c-1}\big(eq(u),eq(\tilde{u})\big)\big\|_{L_t^2L_x^\frac{2d}{d+2}(I\times\R^d)}\\\nonumber&+
\big\||\nabla|^{s_c-1}\big(f(u)-f(\tilde{u})\big)\big\|_{L_t^2L_x^\frac{2d}{d+2}(I\times\R^d)}\\\label{stabxhdd}
\lesssim&\varepsilon+\big\||\nabla|^{s_c-1}\big(f(u)-f(\tilde{u})\big)\big\|_{L_t^2L_x^\frac{2d}{d+2}(I\times\R^d)}.
\end{align}
{\bf The estimate of
$\big\||\nabla|^{s_c-1}\big(f(u)-f(\tilde{u})\big)\big\|_{L_t^2L_x^\frac{2d}{d+2}(I\times\R^d)}$
:} By \eqref{nlesf11},  \eqref{stabxx1}, Step 1: \eqref{step1tilde}
and Step 2: \eqref{uxix}, one has
\begin{align}\label{xichaf2}
&\big\||\nabla|^{s_c-1}\big(f(u)-f(\tilde{u})\big)\big\|_{L_t^2L_x^\frac{2d}{d+2}(I\times\R^d)}\\\nonumber
\lesssim&\big\||\nabla|^{s_c}\omega\big\|_{S^0(I)}\big(\delta+\delta^{p-1+\frac1{s_c}}E
^{1-\frac1{s_c}}\big)
+E\delta^\beta\|\omega\|_{X(I)}^{p-\beta}+\delta^{1-\frac1{s_c}}E^{\frac1{s_c}}\big\||\nabla|^{s_c}\omega\big\|_{S^0(I)}^{1-\frac1{s_c}}
\|\omega\|_{X(I)}^{p-1+\frac1{s_c}}.
\end{align}
Plugging this into \eqref{stabxhdd}, we have
\begin{equation}
\big\||\nabla|^{s_c}\omega\big\|_{S^0(I)}\lesssim\varepsilon+\|\omega\|_{X(I)}^{p-\beta}+\big\||\nabla|^{s_c}\omega\big\|_{S^0(I)}^{1-\frac1{s_c}}
\|\omega\|_{X(I)}^{p-1+\frac1{s_c}}.
\end{equation} This is \eqref{iter22}.
Putting this into \eqref{stabxhdied1}, and by the bootstrap
argument, we conclude  \eqref{shortte1} and \eqref{shortte2}.

And then, it is easy to show \eqref{shortte3} by the triangle
inequality, \eqref{step1tilde} and \eqref{shortte2}.

Using \eqref{shortte1}, \eqref{shortte2}, \eqref{xichaf1} and
\eqref{xichaf1}, we obtain \eqref{shortte4} and \eqref{shortte5}. We
concludes the proof of this lemma.
\end{proof}

Now we turn to prove Lemma \ref{longpert1}.

\noindent{\bf The proof of Lemma \ref{longpert1}:}  First, we claim
\begin{equation}\label{longtsu}
\big\||\nabla|^{s_c}\tilde{u}\big\|_{S^0(I)}\leq C(E,M).
\end{equation}
In fact, from the hypothesis \eqref{eq2.2230}, we know that one can
subdivide time interval  $I$ by $I=\cup_j
I_j,~I_j=[t_j,t_{j+1}],~0\leq j<J_0=J_0(M,\eta)$, such that
$$\|\tilde{u}\|_{L_{t,x}^\frac{p(d+4)}4(I_j\times\R^d)}\leq\eta,$$
where $\eta>0$ is sufficiently small to be determined. Using
Strichartz estimate, fractional chain rule \eqref{fscqdfz},
\eqref{eq2.2230} and \eqref{eq2.22212}, we get
\begin{align*}
\big\||\nabla|^{s_c}\tilde{u}\big\|_{S^0(I_j)}\lesssim&\|\tilde{u}(t_j)\|_{\dot{H}^{s_c}}+
\big\||\nabla|^{s_c-1}eq(\tilde{u})\big\|_{L_t^2L_x^\frac{2d}{d+2}(I_j\times\R^d)}+
\big\||\nabla|^{s_c-1}f(\tilde{u})\big\|_{L_t^2L_x^\frac{2d}{d+2}(I_j\times\R^d)}\\
\lesssim&E+\varepsilon+\|\tilde{u}\|_{L_{t,x}^\frac{p(d+4)}4(I_j\times\R^d)}^p\big\||\nabla|^{s_c}\tilde{u}\big\|_{S^0(I_j)}\\
\lesssim&E+\varepsilon+\eta^p\big\||\nabla|^{s_c}\tilde{u}\big\|_{S^0(I_j)}.
\end{align*}
Thus, by the bootstrap argument, we have
$$\big\||\nabla|^{s_c}\tilde{u}\big\|_{S^0(I_j)}\lesssim
E+\varepsilon.$$ Summing the above bound over all subinteraval
$I_j$, we get the claim \eqref{longtsu}. In particular, from Sobolev
embedding \eqref{inter1}, we know that
\begin{equation}
\|\tilde{u}\|_{X(I)}\leq C(E,M).
\end{equation}
Hence,  we can subdivide time interval  $I$ by $I=\cup_j
I_j,~I_j=[t_j,t_{j+1}],~0\leq j<J_1=J_1(M,\eta)$, such that
$$\|\tilde{u}\|_{X(I_j)}\leq\delta,$$
where  $\delta$ is as in Lemma \ref{shorttimep}.

Therefore, we can apply  Lemma \ref{shorttimep} to each $I_j$ . And
so, $\forall~0\leq j<J_1,~0<\varepsilon<\varepsilon_1$,
\begin{equation}\label{bounds}
\begin{split}
\|u-\tilde{u}\|_{X(I_j)}\lesssim&\varepsilon\\
\big\||\nabla|^{s_c}(u-\tilde{u})\big\|_{S^{0}(I_j)}\lesssim&
\varepsilon^{c(d,p)}\\
\big\||\nabla|^{s_c}u\big\|_{S^{0}(I_j)}\lesssim& E\\
\big\|f(u)-f(\tilde{u})\big\|_{Y(I_j)}\lesssim&\varepsilon\\
\big\||\nabla|^{s_c-1}\big(f(u)-f(\tilde{u})\big)\big\|_{L_t^2L_x^\frac{2d}{d+2}(I_j\times\R^d)}\lesssim&\varepsilon^{c(d,p)},
\end{split}
\end{equation}
provided that one can prove for any $0\leq j<J_1$
\begin{equation}\label{sml}
\|u(t_j)-\tilde{u}(t_j)\|_{\dot{H}_x^{s_c}(I_j)}\leq
C_{j}\varepsilon^{c(d,p)^j}\leq\varepsilon_0.
\end{equation}
Indeed, by Strichartz estimate and the inductive hypothesis, one has
\begin{align*}
&\|u(t_j)-\tilde{u}(t_j)\|_{\dot{H}_x^{s_c}(I_j)}\\
\lesssim&\|u_0-\tilde{u}_0\|_{\dot{H}_x^{s_c}(I_j)}+
\big\||\nabla|^{s_c-1}\big(eq(u),eq(\tilde{u})\big)\big\|_{L_t^2L_x^\frac{2d}{d+2}([0,t_j]\times\R^d)}
\\
&+\big\||\nabla|^{{s_c-1}}\big[f(\tilde{u}+\omega)-f(\tilde{u})\big]\big\|_{L_t^2L_x^\frac{2d}{d+2}([0,t_j]\times\R^d)}\\
\lesssim&\varepsilon+\sum_{k=0}^{j-1}C_k\varepsilon^{c(d,p)^k}.
\end{align*}
Taking $\varepsilon_1$ sufficiently small compared to
$\varepsilon_0$, we derive \eqref{sml}.

Summing the bounds in \eqref{bounds} over all subintervals $I_j$, we
conclude Lemma \ref{longpert1}.

\textbf{Acknowledgements} The authors  were  supported by the NSF of China under
grant No.11171033, 11231006.

\begin{center}

\end{center}

\end{document}